\def\R {\mathbb{R}}
\def\C {\mathcal{C}}
\def\S {\mathcal{S}}
\def\CC{{\mathbb C}}
\def\D{{\mathcal D}}
\def\P{\mathcal{P}}
\def\PP{{\boldsymbol{P}}}
\def\A{{\mathcal A}}
\def\FF{{\boldsymbol{F}}}
\newtheorem{theorem}{Theorem}[section]
\newtheorem{lemma}{Lemma}[section]
\newtheorem{corollary}{Corollary}[section]
\newtheorem{remark}{Remark}[section]
\newtheorem{proposition}{Proposition}[section]
\numberwithin{equation}{section}
\begin{document}
\title[ Asymptotic analysis of FBP]{Asymptotic analysis of a
contact  Hele-Shaw problem \\in a thin domain}

\author[T. Mel'nyk \  \& \  N. Vasylyeva]
{Taras Mel'nyk and  Nataliya Vasylyeva}

\address{Faculty of Mechanics and Mathematics, Taras Shevchenko National University of Kyiv,
\newline\indent
Volodymyrska st.\ 64, 01601 Kyiv, Ukraine}
\email[T.Mel'nyk]{melnyk@imath.kiev.ua}

\address{Institute of Applied Mathematics and Mechanics of NAS of Ukraine
\newline\indent
G.Batyuka st.\ 19, 84100 Sloviansk, Ukraine}
\email[N.Vasylyeva]{nataliy\underline{\ }v@yahoo.com}

\subjclass[2000]{Primary 35R35, 35B40; Secondary 76D27,  76A20}
\keywords{Hele-Shaw problem, asymptotic approximation, thin domain}
%%%%%%%%%%%%%%%%%%%%%%%%%%
\begin{abstract}
We analyze the  contact Hele-Shaw problem with zero surface tension of a free boundary in a thin domain $\Omega^{\varepsilon}(t).$ Under suitable conditions on the given data,  the one-valued local classical solvability of the problem for each fixed value of the parameter $\varepsilon$ is proved.

Using the multiscale analysis, we study the asymptotic behavior of this problem as $\varepsilon \to 0,$
i.e., when the thin domain $\Omega^{\varepsilon}(t)$  is shrunk into the interval $(0, l).$
Namely,  we find exact representation of the free boundary for $t\in[0,T],$
derive the corresponding limit problem $(\varepsilon= 0),$ define other terms of the asymptotic
approximation and prove appropriate asymptotic estimates that justify this approach.

 We also establish the preserving geometry of the free boundary near corner points for $t\in[0,T]$
 under assumption that free and fixed boundaries form  right angles at the initial time  $t=0$.
 \end{abstract}
%%%%%%%%%%%%%%%%%%%%%%%%%%%%%%%%%%%%%%%%%%%%%%%%%%%%%%%%%%%%%%%%%%%%%%

\maketitle
\tableofcontents

%%%%%%%%%%%%%%%%%%%%%%%%%%%%%%%%%%%%%%%%%%%%%%%%%%%%%%%%%%%%%%%%%%%%%%
\section{Introduction}
\label{s1}

\noindent

The Hele-Shaw problem was first introduced in 1897 by H.S. Hele-Shaw, a British engineer, scientist and inventor \cite{Hs1,Hs2}.
This problem models the pressure of fluid squeezed between two parallel plate, a small distance apart.
For the last 70 years, this problem have merited a great research interest among the mathematical, physical, engineering and biological community due to its  wide application in hydrodynamics, mathematical biology, chemistry and finance.
In addition, many other problems of fluid mechanics are associated with Hele-Shaw flows, and therefore the study of these flows is very important, especially for microflows. This is due to manufacturing technology that creates shallow flat configurations, and the typically low Reynolds numbers of microflows. There is a vast literature on the Hele-Shaw problem and related problems (see e.g. \cite{Ho}).

Here we focus on the contact one-phase Hele-Shaw problem with zero surface tension (ZST) of a free (unknown) boundary in a thin domain. Let $T>0$ be arbitrarily fixed, and let $Q\subset\R^{2}$ be a rectangle $Q=(0,l)\times(0,2\varepsilon)$ for some given positive values $l$ and $\varepsilon$.
We denote
$$
Q_{T}=Q\times (0,T)\quad\quad\text{and}\quad\quad \partial Q_{T}=\partial Q\times [0,T].
$$

Let $\Gamma^{\varepsilon}(t),$ $t\in[0,T],$ be a simple curve
$\Gamma^{\varepsilon}(t)\subset\bar{Q}$ which splits the rectangle
$Q$ into two subdomains $\Omega^{\varepsilon}(t)$ and $Q\backslash
\overline{\Omega^{\varepsilon}(t)}$, such that for some unknown function
$\rho=\rho(y_{1},t):[0,l]\times[0,T]\to\R$ the domain
$\Omega^{\varepsilon}(t)$ is given by
\begin{equation}\label{1.1}
\Omega^{\varepsilon}(t)=\{y=(y_{1},y_{2})\in Q:\quad
y_{1}\in(0,l),\quad 0<y_{2}<\varepsilon+\rho(y_{1},t)\},\quad
t\in(0,T)
\end{equation}
(see Fig. \ref{fig:1}).

The mathematical setting of the contact one-phase Hele-Shaw problem is to determine
the evolution of the 2-dimensional fluid domain
$\Omega^{\varepsilon}(t)$ (other words, to find a function $\rho$) and the fluid pressure
$p^{\varepsilon}=p^{\varepsilon}(y_{1},y_{2},t),$ $(y_{1},y_{2},t) \in \Omega^{\varepsilon}(t),$ such that
\begin{equation}\label{1.2}
\begin{cases}
\Delta_{y}p^{\varepsilon}=0\quad\text{in}\quad\Omega^{\varepsilon}(t),\quad t\in(0,T),
\\[2mm]
p^{\varepsilon}=0\quad\quad\text{and}\quad \dfrac{\partial p^{\varepsilon}}{\partial \mathbf{n}_{t}}=-\gamma V_{\mathbf{n}}\quad \text{on}\quad \Gamma^{\varepsilon}(t),\, t\in[0,T],
\\[2mm]
\dfrac{\partial p^{\varepsilon}}{\partial \mathbf{n}}=\Phi^{\varepsilon}(y,t)\quad \text{on}\quad\partial\Omega^{\varepsilon}(t)\backslash\Gamma^{\varepsilon}(t), \, t\in[0,T],
\\[2mm]
\rho(y_{1},0)=0,\qquad y_{1}\in[0,l],
\end{cases}
\end{equation}
where $\gamma$ is a positive given number and the function $\Phi^{\varepsilon}$
is prescribed, and $\mathbf{n}_{t}=(n_{t}^{1},n_{t}^{2})$ and $\mathbf{n}$ denote the outward normals to
$\Gamma^{\varepsilon}(t)$ and
$\partial\Omega^{\varepsilon}(t)\backslash\Gamma^{\varepsilon}(t)$,  respectively.
Finally, the symbol $V_{\mathbf{n}}$ stands the velocity
of the free boundary in the direction of $\mathbf{n}_{t}$ while $\Delta_{y}=\sum_{i=1}^{2}\frac{\partial^{2}}{\partial y_{i}^{2}}$.

 It is worth mentioning that the last condition in \eqref{1.2} together with representation \eqref{1.1} provides
 that the domain $\Omega^{\varepsilon}:=\Omega^{\varepsilon}(0)$  and, hence, $\Gamma^{\varepsilon}:=\Gamma^{\varepsilon}(0)$ are given. Moreover, the homogenous Dirichlet condition on the free boundary $\Gamma^{\varepsilon}(t)$ means that problem \eqref{1.1}-\eqref{1.2} is the Hele-Shaw  problem with ZST.

In the paper we analyze the well-posed problem \eqref{1.1}-\eqref{1.2}, that means
 the domain $\Omega^{\varepsilon} (t)$ is expanding in time $t\in[0,T]$, i.e. $\Omega^{\varepsilon} (t_{1})\subset \Omega^{\varepsilon} (t_{2})$ for $t_{1}<t_{2}$. This property can be achieved by the appropriate choice of the given function $\Phi^{\varepsilon}$.

\begin{figure}[htbp]
\begin{center}
\includegraphics[width=13cm]{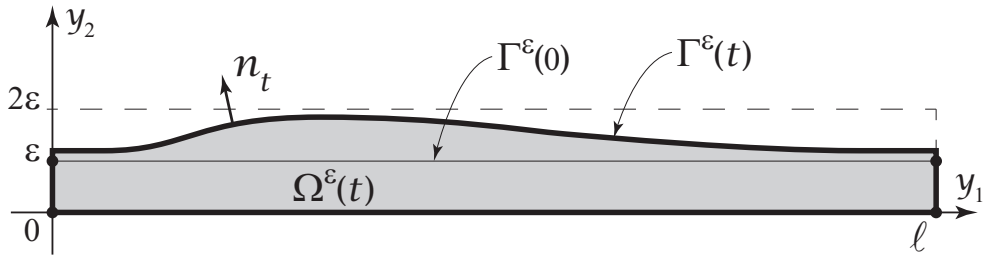}
\end{center}
\vskip-10pt
\caption{Typical domain configuration}\label{fig:1}
\end{figure}

Since the 1940s, there has been plenteous effort devoted to  the Hele-Shaw free boundary problem
(see e.g. \cite{Ga,Hy1,Hy2,HH,PK1,PK2,PK3,R,S,ST} and references therein) both analytically and numerically. The
significant steps leading to exact solutions of  Hele-Shaw models arose via conformal mapping techniques, by which the problem can be recast as an initial value problem of a functional differential equation.
Also if  a free boundary is spherically symmetric, there exists a unique radially symmetric stationary solution to a moving boundary problem \cite{Friedman_1999}. Stability and long-time behavior of solutions of the Hele-Shaw problem  have been extensively studied with different methods  in  \cite{C,CP,ES2,FR} (see also references therein). For further acquaintance with results,  we send readers to paper \cite{Va} and monograph \cite{GV}, where a brief overview of the Hele-Shaw problem and a historic overview of the development in
searching exact solutions is presented. As for numerical solutions of  Hele-Shaw flows, they were discussed in \cite{DM,CHS,W} (see also references therein).

Coming to the solvability of Hele-Shaw models, we quote the works \cite{EO,EJ,BF,Ki}, where existence of weak,
variational and viscosity solutions are established. In the case
of regular initial data, the existence and uniqueness of classical
solutions to the one-phase well-posed Hele-Shaw problem are discussed  in
\cite{B1,B,C,CP,ES,FR,P,Y}.

As for a nonregular initial shape of a moving boundary, the one-phase Hele-Shaw model with ZST was first investigated via qualitative approach in plain corners in \cite{KLV}, where the motion of the corner point was described. In
particular,  it was shown that the waiting time phenomena  (preservation of angles at moving boundaries for a certain time) exists in the case of acute angles, while the obstacle angles
are immediately smoothed. The solvability of  one- and two-phase
well-posed Hele-Shaw problems in the case of  corner points with
acute angles on a free boundary  were studied  in \cite{BF1,BV1,V1,V2}. We remark that papers \cite{BF1} and \cite{V2} are related to the contact  Hele-Shaw problem with and without surface tension of unknown boundary.

Nevertheless, the classical solvability of  problem \eqref{1.1}-\eqref{1.2} with nonhomogeneous Neumann conditions on the vertical sides $\Gamma^{\varepsilon}_{1}$ and $\Gamma^{\varepsilon}_{3}$  when fixed and free
boundaries form right angles is still an open problem.

 The motivation in the asymptotic study of problem \eqref{1.1}-\eqref{1.2} in the thin domain
$\Omega^{\varepsilon}(t)$ (as  $\varepsilon \to 0)$ arises from the investigation of mathematical models of atherosclerosis
\cite{FHH,Mel}.
In \cite{FHH} it was proved that for any small $\epsilon >0$ under certain condition for initial data  there is a unique $\epsilon$-thin radially symmetric stationary plaque, i.e., a plaque with $R(t)  \equiv 1 - \epsilon,$ and in addition, conditions were determined under which the $\epsilon$-thin stationary plaque is linearly asymptotically stable (or unstable) as $t \to +\infty$ and when it is shrunk and disappeared. A multiscale analysis of a new mathematical model of the atherosclerosis development  in a thin tubular domain (without moving boundary) was performed, in particular, the corresponding limit two-dimensional problem was derived, and the asymptotic approximation for the solution was constructed and justified in \cite{Mel}.

\smallskip

The novelty of this paper consists of three parts.
\begin{itemize}
  \item
  At first,  we find sufficiently conditions for given functions in model \eqref{1.1}-\eqref{1.2} which provide the local classical solvability of the contact Hele-Shaw problem for each fixed $\varepsilon>0.$ To this end, we exploit the approach from \cite{B1,V2}.
	  \item
Secondly, we make rigorous asymptotic analysis of problem \eqref{1.1}-\eqref{1.2} as  $\varepsilon \to 0,$ i.e., when the thin domain
$\Omega^{\varepsilon}(t)$  is shrunk into the interval $(0, l).$
Applying the method of papers \cite{KM,Mel,Mel_Sad}, we find the moving curve $\Gamma^{\varepsilon}(t)$ and
construct the asymptotic approximation  $\mathcal{P}^{\varepsilon}$ for the solution to problem \eqref{1.1}-\eqref{1.2} and evaluate its deviation from the classical solution $p^\varepsilon$ in the Sobolev space $\C([0,T]; \, H^{1}(\Omega^{\varepsilon}(t))).$
To our knowledge, this is the first work  in the mathematical literature concerning the rigorous asymptotic study of Stefan type  problems in thin domains.
\item
Finally, collecting the results concerning to both the solvability and the asymptotic representation, we establish preserving  the geometry of the free boundary in  small neighborhoods of the corner points. Besides, under certain assumptions on the given function $\Phi^{\varepsilon},$  the size of these neighborhoods is estimated via the size of the support of the function $\Phi^{\varepsilon}|_{y_{2}=0}$.
\end{itemize}

\vspace{5pt}

\noindent
{\bf Outline of the paper.}
 In Section \ref{s2}, we introduce some functional spaces and  notations.
The classical solvability of \eqref{1.1}-\eqref{1.2} is formulated in Theorem \ref{t3.1} in Section \ref{s3}.
Section \ref{s4}  states  the main Theorem \ref{t4.1} that describes the asymptotic behavior of the solution $p^\varepsilon$. Section \ref{s8} is devoted to obtaining some auxiliary results which play significant role in the proofs of Theorems \ref{t3.1} and \ref{t4.1}.
The proof of Theorem \ref{t3.1} is carried out in Section~\ref{s5}. Moreover, in Subsection~\ref{s5.6} we discuss the solvability problem \eqref{1.1}-\eqref{1.2} for  more general domains $\Omega^{\varepsilon}$ (i.e. $\rho(y_{1},0)\neq 0$; see Theorem \ref{t5.7}).  The proof of Theorem \ref{t4.1} is presented  in Section \ref{s6}. In Conclusion we analyze obtained results and consider research perspectives.

%%%%%%%%%%%%%%%%%%%%%%%%%%%%%%%%%%%%%%%%%%%%%%%%%%%%%%%%%%%%%%%%%%%%%

%%%%%%%%%%%%%%%%%%%%%%%%%%%%%%%%%%%%%%%%%%%%%%%%%%%%%%%%%%%%%%%%%%%%%%
\section{Functional Spaces and Notations}
\label{s2}

We carry out our analysis in the framework of the H\"{o}lder and Sobolev spaces.
Therefore, we recall some definitions.
Let $\D$ be a domain in $\R^{n},$ $n\geq 1,$ and $\alpha\in(0,1).$ Notation
$\C^{k+\alpha}(\D),$ $L^{p}(\D),$ $W^{k,p}(\D),$ $W_{0}^{k,p}(\D)$ represent the classical  H\"{o}lder and Sobolev spaces, where
$k\in \mathbb{N}_0$ and $p\geq 1.$ In addition, we will use the standard alternative notation $H^{1}(\D)$  for the space $W^{1,2}(\D)$.

Let $\mathbf{X}$ be a  Banach space with the norm $\|\cdot\|_{\mathbf{X}}.$ The space $\C([0,T];\mathbf{X})$ comprises
all continuous function on $[0, T]$ taking values in $\mathbf{X};$
 the space $L^{p}((0,T);\mathbf{X})$ consists of all measurable functions $u \mapsto \mathbf{X}$
 with
 $$
 \|u\|_{L^{p}((0,T);\mathbf{X})} := \left(\int_0^T \|u(t)\|_{\mathbf{X}}^p\, dt\right)^{1/p} < +\infty.
 $$

Denote by $\D_{T}:=\D\times(0,T)$,
\[
\langle v\rangle_{y,\D_{T}}^{(\alpha)} := \sup\Big\{ \frac{|v(y,t)-v(\bar{y},t)|}{|y-\bar{y}|^{\alpha}}:\  (y,t), (\bar{y},t)\in\bar{\D}_{T}
\quad y\neq\bar{y}
\Big\},
\]
and
\[
\CC^{k+\alpha}(\bar{\D}_{T}) :=\C([0,T]; \C^{k+\alpha}(\bar{\D})).
\]

Also we introduce the Banach space $
\hat{\CC}^{k+\alpha}(\bar{\D}_{T}),\, k\geq 1, $ consisting of all
functions $v\in \CC^{k+\alpha}(\bar{\D}_{T})$ such that
$v_{t}:=\frac{\partial v}{\partial t}\in
\CC^{k-1+\alpha}(\bar{\D}_{T})$ and the norm
$$
\|v\|_{\hat{\CC}^{k+\alpha}(\bar{\D}_{T})}:=\|v\|_{\CC^{k+\alpha}(\bar{\D}_{T})}+
\|v_{t}\|_{\CC^{k-1+\alpha}(\bar{\D}_{T})} < +\infty.
$$

In the spaces $\CC^{k+\alpha}(\bar{\D}_{T})$ and $\hat{\CC}^{k+\alpha}(\bar{\D}_{T})$ we secrete the  subspaces
\begin{align*}
\CC_{0}^{k+\alpha}(\bar{\D}_{T})&:=\left\{v\in
\CC^{k+\alpha}(\bar{\D}_{T}) : \, D_{y}^{\beta}v(y,0)=0,\,
|\beta|=0,1,...,k\right\},
\\
\hat{\CC}_{0}^{k+\alpha}(\bar{\D}_{T})&:=\left\{v\in
\hat{\CC}^{k+\alpha}(\bar{\D}_{T}):  \,  D_{y}^{\beta}v(y,0)=0,\,
|\beta|=0,1,...,k,\, D_{y}^{\iota} v_{t}(y,0)=0,\,
|\iota|=0,1,...,k-1\right\},
\end{align*}
where $|\beta|$ and $|\iota|$ are  multyindexes, i.e.
$|\beta|=\beta_{1}+...+\beta_{n}$,
$|\iota|=\iota_{1}+...+\iota_{n}$ .

Throughout this work, the symbol $C$ will denote a generic positive constant, depending only on the structural quantities of the model.
We will denote the inner product in $L^{2}(0,a)$ by the symbol $\langle\cdot,\cdot\rangle_{a}$.

Finally, for each $t\in[0,T]$
the middle value of a function $v=v(z,t), \ z\in[0,\mathfrak{f}],$ is  designated by
\begin{equation}\label{middle_value}
\langle\langle v\rangle\rangle_{\mathfrak{f}}:=\frac{1}{\mathfrak{f}}\int_{0}^{\mathfrak{f}}v(z,t)dz,
\end{equation}
where $\mathfrak{f}$ is a positive function $\mathfrak{f}=\mathfrak{f}(\cdot,t)$.

%%%%%%%%%%%%%%%%%%%%%%%%%%%%%%%%%%%%%%%%%%%%%%%%%%%%%%%%%%%%%%%%%%%%%

%%%%%%%%%%%%%%%%%%%%%%%%%%%%%%%%%%%%%%%%%%%%%%%%%%%%%%%%%%%%%%%%%%%%%%
\section{Local Classical Solvability of Problem \eqref{1.1}-\eqref{1.2}}
\label{s3}

\noindent Throughout this section, we assume that the positive
parameter $\varepsilon$ is arbitrary but fixed. First we write
$\Gamma^{\varepsilon}(t)$ and
$\partial\Omega^{\varepsilon}(t)\backslash\Gamma^{\varepsilon}(t)$
in  more comfortable form. Since we will look for the local
classical solution, we define the unknown boundary
$\Gamma^{\varepsilon}(t)$  for each $t\in[0,T]$ as follows
\begin{equation}\label{3.1}
\Gamma^{\varepsilon}(t)=\{y=(y_{1},y_{2})\in\R^{2}:\quad
y_{2}=\varepsilon+\rho(y_{1},t),\quad
y_{1}\in[0,l]\},\quad\text{where}\quad
|\rho(y_{1},t)|<\varepsilon/5.
\end{equation}
In the light of this definition, the boundary
$\partial\Omega^{\varepsilon}(t)\backslash\Gamma^{\varepsilon}(t)$
is described  for each $t\in[0,T]$ as
\[
\partial\Omega^{\varepsilon}(t)\backslash\Gamma^{\varepsilon}(t)=\Gamma^{\varepsilon}_{1}(t)\cup\Gamma_{2}\cup\Gamma^{\varepsilon}_{3}(t),
\]
where %we put
\begin{align}\label{3.2}\notag
\Gamma^{\varepsilon}_{1}(t)&=\{y=(y_{1},y_{2})\in\R^{2}:\quad y_{1}=0,\quad y_{2}\in[0,\varepsilon+\rho(0,t))\},\\
\Gamma_{2}&=\{y=(y_{1},y_{2})\in\R^{2}:\quad y_{2}=0,\quad y_{1}\in(0,l)\},\\
\Gamma^{\varepsilon}_{3}(t)&=\{y=(y_{1},y_{2})\in\R^{2}:\quad
y_{1}=l,\quad y_{2}\in[0,\varepsilon+\rho(l,t))\}.\notag
\end{align}

Now, we are ready to state our general  assumptions for  the structural quantities appearing in problem \eqref{1.1}-\eqref{1.2}.
\begin{description}
\item[(h1)(Conditions for the boundary
$\partial\Omega^{\varepsilon}(0)$)] We assume that
$$
\partial\Omega^{\varepsilon}:=\partial\Omega^{\varepsilon}(0)=\Gamma^{\varepsilon}_{1}\cup\Gamma_{2}\cup\Gamma^{\varepsilon}_{3}\cup\Gamma^{\varepsilon},
$$
where
\begin{align*}
\Gamma^{\varepsilon}_{1}&:=\Gamma^{\varepsilon}_{1}(0)=\{y=(y_{1},y_{2})\in\R^{2}:\quad y_{1}=0,\quad y_{2}\in[0,\varepsilon)\},\\
\Gamma^{\varepsilon}_{3}&:=\Gamma^{\varepsilon}_{3}(0)=\{y=(y_{1},y_{2})\in\R^{2}:\quad y_{1}=l,\quad y_{2}\in[0,\varepsilon)\},\\
\Gamma^{\varepsilon}&:=\Gamma^{\varepsilon}(0)=\{y=(y_{1},y_{2})\in\R^{2}:\quad
y_{2}=\varepsilon,\quad y_{1}\in[0,l]\}.
\end{align*}
Besides, for each fixed $T>0$, we denote
$\partial\Omega^{\varepsilon}_{ T}=\partial\Omega^{\varepsilon}\times [0,T],$ \ $\Gamma^{\varepsilon}_{ T}=\Gamma^{\varepsilon}\times [0,T],$ \ $\Gamma_{2,T}=\Gamma_{2}\times [0,T],$ \ $\Gamma^{\varepsilon}_{i,T}=\Gamma^{\varepsilon}_{i}\times [0,T]$ for  $i\in \{1,3\}.$

\item[(h2)(Smoothness of the given functions)]
Let
$$
\varphi_{1} \in\C([0,T];\C^{2+\alpha}[0,1]),\quad
\varphi_{2} \in\C([0,T];\C^{2+\alpha}[0,l]),\quad
\varphi_{3} \in\C([0,T];\C^{2+\alpha}[0,1]).
$$

\item[(h3)(Representation of the given function)]
We assume that
\begin{equation}\label{Phi}
\Phi^\varepsilon(y_{1},y_{2},t)=
\left\{
  \begin{array}{ll}
    \chi_{2}(\frac{y_{2}}{\varepsilon} )\, \varphi_{1}(\frac{y_{2}}{\varepsilon},t), & y_2 \in \Gamma^{\varepsilon}_{1}(t), \ \ t\in[0,T], \\[2mm]
    \varepsilon\, \chi_{1}(y_{1})\, \varphi_{2}(y_{1},t), &  y_1 \in \Gamma_{2}, \ \ t\in[0,T],\\[2mm]
    \chi_{2}(\frac{y_{2}}{\varepsilon}) \, \varphi_{3}(\frac{y_{2}}{\varepsilon},t), & y_2 \in \Gamma^{\varepsilon}_{3}(t), \ \ t\in[0,T],
  \end{array}
\right.
\end{equation}
where  $\chi_{i}\in\C_{0}^{\infty}(\R^{1}),$ $i\in \{1,2\},$  are the cut-off functions such that  $ 0\leq\chi_{i}\leq 1$ and
\begin{equation*}
  \chi_{1}(y_{1})=
\begin{cases}
1,\quad\text{if}\quad y_{1}\in[\frac{2l}{5} ,\frac{3l}{5}],\\[2mm]
0,\quad\text{if}\quad y_{1}\notin(\frac{l}{5} , \frac{4l}{5}),
\end{cases}
\quad
\chi_{2}(\xi_{2})=
\begin{cases}
1,\quad\text{if}\quad \xi_{2}\in[\frac{2}{5} , \frac{3}{5}],\\[2mm]
0,\quad\text{if}\quad \xi_{2}\notin( \frac{1}{5} , \frac{4}{5}).
\end{cases}
\end{equation*}
\item[(h4)(Condition of the well-posedness to \eqref{1.1}-\eqref{1.2})]
We require  that  the inequality holds
\begin{equation}\label{3.4}
V_{n}\Big|_{t=0}>0\quad \text{on}\quad \Gamma^{\varepsilon}.
\end{equation}
\end{description}
\begin{remark}\label{r3.1}
It is apparent that condition \eqref{3.4} means the positivity of the initial speed of the moving
boundary.  That guarantees the expansion of the domains
$\Omega^{\varepsilon}(t)$ ($\Omega^{\varepsilon}(t_{1})\subset
\Omega^{\varepsilon}(t_{2})$ if $0\leq t_{1} < t_2 \, \leq T$) and as a consequence the well-posedness of
\eqref{1.1}-\eqref{1.2} (see, e.g. \cite{BF}, \cite{Va}).
Besides, this speed, obviously, depends on the function $\Phi^{\varepsilon}$.
In  forthcoming Lemma \ref{l5.1} (Subsection \ref{s5.1}) and Remark \ref{r4.3} (Subsection \ref{s4.1}), we shall discuss the assumptions on  $\Phi^{\varepsilon}$ which provide inequality \eqref{3.4}. In addition, the formal  integration by parts in \eqref{1.2} gives the necessary condition
\[
\int_{0}^{1}\chi_{2}(\xi_{2})\varphi_{1}(\xi_{2},t) \, d\xi_{2}+\int_{0}^{l}\chi_{1}(y_{1})\varphi_{2}(y_{1},t) \, dy_{1}+\int_{0}^{1}\chi_{2}(\xi_{2},t)\varphi_{3}(\xi_{2},t) \, d\xi_{2}>0\qquad \forall\, t\in[0,T]
\]
for the fulfillment of \eqref{3.4}.
\end{remark}

Now we can state our first main result concerning the local classical solvability of the Hele-Shaw problem \eqref{1.1}-\eqref{1.2}.
\begin{theorem}\label{t3.1}
Under assumptions \textbf{(h1)}-\textbf{(h4)}, for any fixed positive $\varepsilon$,
problem \eqref{1.1}-\eqref{1.2} admits a unique classical solution
$(p^{\varepsilon}(y_{1},y_{2},t),\rho(y_{1},t))$ in some
interval $t\in[0,T]$, such that $\Gamma^{\varepsilon}(t)$ is given
by \eqref{3.1} and
\[
p^{\varepsilon}\in\C([0,T];\C^{2+\alpha}(\overline{\Omega^{\varepsilon}(t)})),\quad
\rho\in\C([0,T];\C^{2+\alpha}([0,l])), \quad
\frac{\partial\rho}{\partial t}\in\C([0,T];\C^{1+\alpha}([0,l])).
\]
\end{theorem}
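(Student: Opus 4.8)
The plan is to reduce the free boundary problem \eqref{1.1}-\eqref{1.2} to an equivalent problem on the fixed domain $\Omega^{\varepsilon}:=\Omega^{\varepsilon}(0)$ (or rather on a fixed reference rectangle) via the Hanzawa-type change of variables $y_{2}\mapsto z_{2}$ that straightens the free boundary $\Gamma^{\varepsilon}(t)$, using the unknown $\rho$ as the transformation parameter. Under this change the Laplace operator becomes a second-order elliptic operator with coefficients depending on $\rho$ and its $y_{1}$-derivatives, the homogeneous Dirichlet condition on $\Gamma^{\varepsilon}(t)$ is preserved, and the dynamic (Neumann) condition $\partial_{\mathbf n_{t}}p^{\varepsilon}=-\gamma V_{\mathbf n}$ turns into an evolution equation for $\rho$ of the form $\partial_{t}\rho = \mathcal{N}(\rho)[p^{\varepsilon}]$, where $\mathcal{N}(\rho)$ is (up to lower-order curvature factors) the Dirichlet-to-Neumann map associated to the straightened elliptic operator. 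The first main step is therefore to set up this reduction carefully and record the regularity of the transformed coefficients in the H\"older scale $\CC^{k+\alpha}(\bar{\Omega}^{\varepsilon}_{T})$ introduced in Section~\ref{s2}, checking that the smallness condition $|\rho|<\varepsilon/5$ in \eqref{3.1} keeps the transformation a diffeomorphism and the operator uniformly elliptic.

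Next I would solve, for a \emph{given} admissible $\rho$, the elliptic boundary value problem for $p^{\varepsilon}$: Laplace (resp.\ the straightened operator) in $\Omega^{\varepsilon}(t)$ with $p^{\varepsilon}=0$ on $\Gamma^{\varepsilon}(t)$ and the prescribed Neumann data $\Phi^{\varepsilon}$ from \textbf{(h3)} on the remaining sides. Here the key technical point is the presence of the corner points where $\Gamma^{\varepsilon}(t)$ meets the vertical sides $\Gamma^{\varepsilon}_{1}(t)$, $\Gamma^{\varepsilon}_{3}(t)$ at right angles, and where a Dirichlet condition meets a Neumann condition. The cut-off structure of $\Phi^{\varepsilon}$ in \textbf{(h3)} — the functions $\chi_{1},\chi_{2}$ vanish near the corners — is precisely what is needed to guarantee full $\C^{2+\alpha}$ regularity of $p^{\varepsilon}$ up to the boundary including the corners (compatibility of the mixed data at the right angles), by the weighted/corner elliptic theory used in \cite{B1,V2}; I would invoke the auxiliary results of Section~\ref{s8} for this. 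This yields a solution operator $\rho\mapsto p^{\varepsilon}=p^{\varepsilon}[\rho]$ together with estimates of its Lipschitz dependence on $\rho$ in the relevant H\"older norms.

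With the solution operator in hand, the evolution equation for $\rho$ reads $\partial_{t}\rho=\mathcal{G}(\rho):=\mathcal{N}(\rho)[p^{\varepsilon}[\rho]]$ with the initial condition $\rho(\cdot,0)=0$ from \eqref{1.2}. I would then close the argument by a fixed-point / contraction scheme in the space $\hat{\CC}^{2+\alpha}_{0}(\overline{[0,l]}_{T})$ (matching the claimed regularity $\rho\in\C([0,T];\C^{2+\alpha})$, $\partial_{t}\rho\in\C([0,T];\C^{1+\alpha})$) on a short time interval $[0,T]$: Schauder estimates for the elliptic step give that $\mathcal{G}$ maps a small ball of this space into itself and is a contraction after possibly shrinking $T$, because the nonlinearity is quadratic-type in $\rho$ and $\rho(\cdot,0)=0$ makes the norm small for small $T$. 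Uniqueness follows from the same contraction estimate. Finally, the well-posedness hypothesis \textbf{(h4)}, $V_{\mathbf n}|_{t=0}>0$, ensures that the constructed $\rho$ keeps $\Omega^{\varepsilon}(t)$ expanding, so the sign conditions implicit in the parabolicity of the $\rho$-equation (the Dirichlet-to-Neumann map has the correct sign, making $\partial_{t}\rho=\mathcal{G}(\rho)$ a well-posed forward evolution) are preserved on $[0,T]$.

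The main obstacle I expect is the corner analysis in the elliptic step: obtaining $\C^{2+\alpha}$ regularity of $p^{\varepsilon}$ up to the right-angle corners where Dirichlet and Neumann conditions meet, and, equally, making sure the Dirichlet-to-Neumann map $\mathcal{N}(\rho)$ acting on such $p^{\varepsilon}$ produces data for $\rho$ that is still in $\C^{2+\alpha}([0,l])$ with the right behavior at the endpoints $y_{1}=0,l$. This is exactly where assumptions \textbf{(h2)} (smoothness of $\varphi_{i}$) and \textbf{(h3)} (the cut-offs killing the Neumann data near the corners, plus the $\varepsilon$-scaling) are indispensable, and where one must lean on the machinery of \cite{B1,V2} together with the auxiliary lemmas of Section~\ref{s8}; the rest of the proof is a fairly standard Hanzawa-transform-plus-contraction argument.
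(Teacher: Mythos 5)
Your outline follows the right geometric reduction (Hanzawa transform, straightened domain, small $T$), but the iterative scheme you then propose has a genuine gap: a one-derivative loss that your contraction argument does not address, and which the paper's proof is specifically engineered to avoid.

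Concretely, you propose to freeze $\rho$, solve the elliptic problem to get the operator $\rho\mapsto p^{\varepsilon}[\rho]$, and then contract the map $\rho\mapsto\int_0^t\mathcal{G}(\rho)\,d\tau$ in $\hat{\CC}^{2+\alpha}_0$. But if $\rho\in\C([0,T];\C^{2+\alpha}([0,l]))$, the straightened coefficients are only $\C^{\alpha}$ (they contain $\partial^2_{y_1}\rho$, see \eqref{5.5*}), so Schauder gives $p^{\varepsilon}[\rho]\in\C([0,T];\C^{2+\alpha})$ and the Stefan condition \eqref{5.6} then yields only $\partial_t\rho\in\C([0,T];\C^{1+\alpha})$. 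Integrating in time returns $\rho\in\C^1([0,T];\C^{1+\alpha})$, which is not $\C([0,T];\C^{2+\alpha})$ — the iteration map does not send a ball of $\hat{\CC}^{2+\alpha}_0$ into itself. The smallness for small $T$ coming from ``quadratic nonlinearity and $\rho(\cdot,0)=0$'' is of no help: the derivative loss occurs already in the \emph{linear} part of $\mathcal{G}$, not in the quadratic remainder. To close the decoupled scheme as you describe it you would need the Escher--Simonett maximal-regularity / continuous-interpolation machinery, which you do not invoke.

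The paper circumvents this by \emph{not} decoupling. After the Hanzawa transform it linearizes the \emph{pair} $(v,\rho)$ around $(p^{\varepsilon}_0, s)$ (Subsection~\ref{s5.3}) to get the coupled linear system \eqref{5.13} for $\mathbf{z}=(u,\sigma)$, where the two free-boundary relations appear jointly as the Dirichlet coupling $u=A(x_1)\sigma$ and $\gamma\,\partial_t\sigma+\partial_{x_2}u=f_1$ on $\Gamma^{\varepsilon}_T$. Eliminating $\sigma$ gives a single harmonic unknown $w$ satisfying the \emph{dynamic boundary condition} $\partial_t w+\tfrac{A}{\gamma}\partial_{x_2}w=\tfrac{A}{\gamma}\bar f_1$ (problem \eqref{5.15}), whose Schauder theory is built from the right-angle model Lemma~\ref{l5.3}. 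The crucial point is that one then recovers $\sigma=w/A|_{\Gamma^{\varepsilon}_T}$ as the \emph{Dirichlet trace} of the $\CC^{2+\alpha}$ function $w$ — not by integrating $\partial_t\sigma$ in time — and that trace is $\CC^{2+\alpha}$. This is exactly where the missing derivative is regained, and it is what makes Theorem~\ref{t5.2} map data $(f_0,f_1)\in\CC^{\alpha}\times\CC^{1+\alpha}$ to $(u,\sigma)\in\CC^{2+\alpha}\times\hat{\CC}^{2+\alpha}$ so that $\A^{-1}\bar{\mathcal{F}}$ is a self-map of $\mathcal{H}_1$. A second ingredient you omit is the preliminary Fourier-series argument (Lemmas~\ref{l5.1}--\ref{l5.2}) showing $p^{\varepsilon}_0\in\C^{3+\alpha}(\bar\Omega^{\varepsilon})$, one order better than the target class; this extra smoothness of the linearization centre is what makes the nonlinear remainder $\mathcal{F}(\mathbf{z})$ land in $\mathcal{H}_2$ and be small for small $T$ (Lemma~\ref{l5.4}). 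So the approach you reference from \cite{B1,V2} is indeed what the paper uses, but its essential mechanism — linearize the coupled system and solve via a dynamic boundary condition, reading off $\sigma$ as a Dirichlet trace — is precisely what your ``solve-elliptic-then-evolve'' iteration discards.
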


\begin{remark}\label{r3.3}
To simplicity consideration, we specify  supports of the cut-off functions $\chi_{1}$ and $\chi_{2}$ in assumption \textbf{(h3)}. Actually, with the nonessential modifications in the proof, the same results hold for the functions $\chi_{1}$ and $\chi_{2}$ with the supports lying strictly inside $(0,l)$ and $(0,1),$ respectively.
\end{remark}

\begin{remark}\label{r3.2} One-valued classical solvability of \eqref{1.1}-\eqref{1.2} can be provided by more general assumptions on
$\partial\Omega^{\varepsilon}$ and $\Phi^{\varepsilon}$. This will be discussed in Theorem \ref{t5.7} (see Subsection \ref{s5.6}).
\end{remark}

The proof of Theorem \ref{t3.1}, which is rather technical, will be postponed to Section \ref{s5}.

%%%%%%%%%%%%%%%%%%%%%%%%%%%%%%%%%%%%%%%%%%%%%%%%%%%%%%%%%%%%%%%%%%%%%

%%%%%%%%%%%%%%%%%%%%%%%%%%%%%%%%%%%%%%%%%%%%%%%%%%%%%%%%%%%%%%%%%%%%%%
\section{Asymptotic behavior of the classical solution to \eqref{1.1}-\eqref{1.2}}
\label{s4}

\noindent
Our next results are concerned the asymptotic behavior of the solution $p^{\varepsilon}$ as $\varepsilon\to 0$.
First, being within  assumptions of Theorem \ref{t3.1}, we  rewrite problem \eqref{1.1}-\eqref{1.2} in more convenient form for studying its asymptotic behavior. To this end,
 we represent $\Gamma^{\varepsilon}(t)$ as
\begin{equation}\label{4.00}
\Gamma^{\varepsilon}(t)=\left\{y= (y_{1},y_{2})\in\R^{2}:\quad
y_{1}\in[0,l],\quad y_{2}=\varepsilon\S(y_{1},t)\right\} \quad \forall \, t\in[0,T],
\end{equation}
where  $\S(y_{1},t)=1+\varepsilon^{-1}\rho(y_{1},t),$ \ $y_{1}\in[0,l], \  t\in[0,T],$
is a new unknown function. Due to \eqref{3.1} and $\bf{(h1)}$
\begin{equation}\label{4.0}
\S(y_{1},0)=1\quad\text{for all} \quad y_{1}\in[0,l],\quad\text{and}\quad  |\S(y_{1},t)|<6/5.
\end{equation}

Below we present the important property of the function $\mathcal{S},$ which will  be used  in the asymptotic analysis
of our problem.
\begin{corollary}\label{c4.1}
Under conditions of Theorem \ref{t3.1}, for all $t\in [0,T]$, the unknown function
$\mathcal{S}$ admits representations
\[
\mathcal{S}(y_{1},t)=\begin{cases}
\mathcal{S}_{0}(t)\quad \text{in a}\   \ \delta\text{-neighborhood of}\quad y_1=0,\\
\mathcal{S}_{l}(t)\quad\text{in a }\ \delta\text{-neighborhood of}\quad y_1=l,
\end{cases}
\]
where $\delta$ is a small positive number. Besides,
\[
\mathcal{S}_{0}(t)\geq 1\quad \text{and}\quad \mathcal{S}_{l}(t)\geq 1\quad \forall t\in[0,T].
\]
\end{corollary}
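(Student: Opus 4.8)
The plan is to exploit the local structure of the data near the vertical sides $\Gamma_1^\varepsilon$ and $\Gamma_3^\varepsilon$, together with the expansion property of the domains guaranteed by \textbf{(h4)}. The starting observation is that by \textbf{(h3)} the Neumann datum $\Phi^\varepsilon$ vanishes in a neighborhood of the two lower corners: indeed $\chi_1$ is supported in $(l/5,4l/5)$ and $\chi_2$ is supported in $(1/5,4/5)$, so $\Phi^\varepsilon\equiv 0$ on $\Gamma_1^\varepsilon(t)$ for $y_2/\varepsilon<1/5$, on $\Gamma_2$ for $y_1<l/5$, and similarly near $y_1=l$. Hence in a fixed (in $y_1$) corner rectangle $\{0<y_1<\delta,\ 0<y_2<\varepsilon\mathcal{S}(y_1,t)\}$ the pressure $p^\varepsilon$ solves the Laplace equation with homogeneous Neumann data on the two fixed sides $\Gamma_1^\varepsilon(t)$ and $\Gamma_2$, and with $p^\varepsilon=0$, $\partial_{\mathbf{n}_t}p^\varepsilon=-\gamma V_{\mathbf n}$ on the free arc.

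First I would reflect $p^\varepsilon$ evenly across the line $y_1=0$ (permissible because of the homogeneous Neumann condition there, and using $\rho(0,t)$ to describe the reflected free arc), turning the corner problem into a one-phase Hele-Shaw problem in a slab-like domain around $y_1=0$ with no lower-boundary flux. In that reflected configuration the overdetermined boundary conditions on the free arc, combined with $\Delta p^\varepsilon=0$ and the zero Neumann condition on $\{y_2=0\}$, force $p^\varepsilon$ to depend only on $y_2$ near the corner: one shows by a Schwarz-reflection / unique-continuation argument (or directly, by checking that the one-dimensional profile $p^\varepsilon=p^\varepsilon(y_2,t)$ is an exact solution and invoking uniqueness from Theorem \ref{t3.1}) that in the $\delta$-neighborhood the level set $\{p^\varepsilon=0\}$ is a horizontal segment, i.e. $\rho(y_1,t)$ is independent of $y_1$ there. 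Setting $\mathcal{S}_0(t):=\mathcal{S}(0,t)$ and $\mathcal{S}_l(t):=\mathcal{S}(l,t)$ gives the claimed representation; the number $\delta$ is any value smaller than $l/5$ for which this rigidity propagates, and it can be taken uniform in $\varepsilon$ because the supports of $\chi_1,\chi_2$ are fixed.

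For the inequalities $\mathcal{S}_0(t)\ge 1$ and $\mathcal{S}_l(t)\ge 1$, I would argue from the monotone expansion of $\Omega^\varepsilon(t)$: by \textbf{(h4)} and Remark \ref{r3.1} one has $\Omega^\varepsilon(t_1)\subset\Omega^\varepsilon(t_2)$ for $t_1<t_2$, hence $\rho(y_1,t)\ge\rho(y_1,0)=0$ for all $t\in[0,T]$ and all $y_1$; in particular $\rho(0,t)\ge0$ and $\rho(l,t)\ge0$, which via $\mathcal{S}=1+\varepsilon^{-1}\rho$ and \eqref{4.0} yields $\mathcal{S}_0(t),\mathcal{S}_l(t)\ge1$. (Alternatively one reads this off the one-dimensional corner profile: the free boundary there moves upward in time by $V_{\mathbf n}>0$.) The main obstacle I anticipate is making the rigidity step fully rigorous — precisely justifying that the vanishing of $\Phi^\varepsilon$ near the corners forces $p^\varepsilon$, and therefore $\Gamma^\varepsilon(t)$, to be flat on a genuinely $\varepsilon$-independent neighborhood, rather than merely on a neighborhood shrinking with $\varepsilon$; this is where the explicit structure of assumption \textbf{(h3)} (supports of $\chi_1$ and $\chi_2$ bounded away from the corners uniformly in $\varepsilon$) does the real work, and where one must be careful that the reflection across $y_1=0$ is compatible with the regularity asserted in Theorem \ref{t3.1}.
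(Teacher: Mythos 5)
Your monotonicity argument for the second claim ($\mathcal{S}_0,\mathcal{S}_l\ge 1$) is in line with what the paper implicitly relies on via \textbf{(h4)} and Remark~\ref{r3.1}. The serious difficulty is with the first claim, and the gap you yourself flag at the end is, in my view, fatal to the proposed route, not merely something to ``make rigorous.''

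Two problems. First, a small inaccuracy in the premise: on the left wall $\Gamma_1^\varepsilon(t)$ the datum $\Phi^\varepsilon=\chi_2(y_2/\varepsilon)\varphi_1(y_2/\varepsilon,t)$ vanishes only for $y_2/\varepsilon<1/5$ and $y_2/\varepsilon>4/5$, not on the whole side of your corner rectangle $\{0<y_1<\delta,\ 0<y_2<\varepsilon\mathcal{S}(y_1,t)\}$; so the picture ``Laplace + homogeneous Neumann on both fixed sides'' is wrong already at the level of the local data. Second, and more fundamentally, even after restricting to a neighborhood of the contact point $(0,\varepsilon\mathcal{S}(0,t))$ where the Neumann datum genuinely vanishes, the conditions $\Delta p^\varepsilon=0$, $\partial_{y_1}p^\varepsilon=0$ on $\{y_1=0\}$, and $p^\varepsilon=0$ on the free arc do \emph{not} determine $p^\varepsilon$ locally: nothing is prescribed on the fourth, interior side of the neighborhood, and the Stefan condition is not independent Cauchy data — its right-hand side $-\gamma V_{\mathbf n}$ is precisely the unknown one wants to control. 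So there is no overdetermination to exploit, and no unique-continuation statement applies. The corner neighborhood supports infinitely many harmonic functions with this boundary data (e.g.\ $\cos(\pi m y_1/l)$ times a hyperbolic function of $y_2$), and indeed the explicit initial pressure (Lemma~\ref{l5.1}) gives $\partial p_0^\varepsilon/\partial y_2|_{\Gamma^\varepsilon}$ a nontrivial $y_1$-dependence near $y_1=0$ through the terms $\cosh(y_1\sqrt{\mu_m})$ and $\cos(\pi m y_1/l)$. Your ``direct'' alternative fares no better: a purely $y_2$-dependent harmonic function with $\partial_{y_2}p=0$ at $y_2=0$ and $p=0$ on the free arc is identically zero, which contradicts the expansion ($p^\varepsilon>0$ in the interior), so the one-dimensional profile is not the local solution and uniqueness of the FBP cannot be invoked in the way you describe. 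The paper itself only offers a one-sentence justification for this corollary (it is asserted as ``a simple consequence'' of Theorem~\ref{t3.1} together with the vanishing of $\Phi^\varepsilon$ near the contact points and the homogeneous Dirichlet condition); your proposal attempts to flesh this out with a reflection/rigidity step, but that step does not hold as stated and would need an entirely different mechanism — some propagation-in-time argument tied to the Stefan dynamics rather than a static rigidity of the harmonic extension.
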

The proof of this statement is a simple consequence of Theorem \ref{t3.1} and it bases on the homogenous Dirichlet condition on $\Gamma^{\varepsilon}(t)$ in problem \eqref{1.2} and on the property of the function $\Phi^{\varepsilon}$ (recall that assumption \textbf{(h3)} provides homogenous Neumann conditions  near the contact  points of  free and fixed  boundaries).
\begin{remark}\label{r4.0}
The statement of Corollary \ref{c4.1} means that the geometry of the free boundary in $\delta$-neighborhoods of the corner points $(0,\mathcal{S}_{0}(t),t),$ $(l,\mathcal{S}_{l}(t),t)$, preserves for each $t\in[0,T]$. In forthcoming  Corollary \ref{c4.2}, under some additional assumptions on the given data, we estimate the value $\delta$ through the support of the function $\Phi^{\varepsilon}|_{y_{2}=0}$ and obtain an  explicit formula for unknown functions $\mathcal{S}_{l}(t)$ and $\mathcal{S}_{0}(t)$.
\end{remark}

Obviously, that  for each $t\in[0,T]$ on the boundary $\Gamma^{\varepsilon}(t)$ we have
\begin{equation}\label{4.1}
\mathcal{R}(y,t):=y_{2}-\varepsilon \mathcal{S}(y_1,t)=0.
\end{equation}
This  equality with straightforward calculations arrive at the following relations on the free boundary:
\[
\frac{\partial p^{\varepsilon}}{\partial {\bf n}_{t}}= \sum_{i=1}^{2}\frac{\partial p^{\varepsilon}}{\partial y_{i}}n_{t}^{i}
=  \frac{1}{|\nabla_{y}\mathcal{R}|} \sum_{i=1}^{2}\frac{\partial p^{\varepsilon}}{\partial y_{i}}\frac{\partial \mathcal{R}}{\partial y_{i}}
\qquad\text{and}\quad
V_{{\bf n}}= \sum_{i=1}^{2} \frac{d y_i}{dt} \, n_{t}^{i} = \frac{\varepsilon }{|\nabla_{\bar{y}}\mathcal{R}|}
\, \frac{\partial \mathcal{S}}{\partial t},
 \]
where  $\nabla_{{y}}\mathcal{R}= (-\varepsilon \frac{\partial \mathcal{S}}{\partial y_1} , 1).$

Thus, the Stefan condition on the free boundary can  be rewritten as
\begin{equation}\label{4.2}
\frac{\partial p^{\varepsilon}}{\partial y_{2}}=\varepsilon
\frac{\partial \mathcal{S}}{\partial y_{1}}  \frac{\partial p^{\varepsilon}}{\partial
y_{1}}-\varepsilon \gamma \frac{\partial \mathcal{S}}{\partial t}.
\end{equation}
Since Theorem \ref{t3.1}  provides the one-to-one classical solvability of \eqref{1.1}-\eqref{1.2},
we can  integrate the first condition on the moving boundary along $\Gamma^{\varepsilon}(t)$ (here we keep in mind the line integral of the first kind). As a result, the
classical solution $(p^{\varepsilon},\mathcal{S})$ of
\eqref{1.1}-\eqref{1.2} satisfies the problem
\begin{equation}\label{4.3}
\begin{cases}
\Delta_{y}p^{\varepsilon}=0\quad \text{in}\quad \Omega^{\varepsilon}(t),\quad t\in(0,T),
\\\\
\displaystyle{
\frac{\partial p^{\varepsilon}}{\partial y_{2}}=\varepsilon
\frac{\partial \mathcal{S}}{\partial y_{1}}  \frac{\partial p^{\varepsilon}}{\partial
y_{1}}-\varepsilon \gamma \frac{\partial \mathcal{S}}{\partial t}} \quad
\text{on}\quad \Gamma^{\varepsilon}(t), \quad t\in[0,T],
\\\\
\displaystyle{\int_{\Gamma^{\varepsilon}(t)} p^{\varepsilon} \, d\ell} = 0,  \quad t\in[0,T],
\\\\
\displaystyle{ \frac{\partial p^{\varepsilon}}{\partial {\bf n}}=\Phi^\varepsilon(y,t)}\quad\text{on}\quad \partial\Omega^{\varepsilon}(t)\backslash\Gamma^{\varepsilon}(t),\quad t\in[0,T],
\\\\
\mathcal{S}(y_{1},0)=1\quad\text{on}\quad [0,l].
\end{cases}
\end{equation}
Usually, to construct the asymptotic approximation for a solution to a boundary-value problem, more smoothness is required for the initial data. In our case, they are
\begin{description}
  \item[\textbf{(h5)}] \qquad
$\varphi_{1} \in\C([0,T];\C^{3}[0,1]),\quad
  \varphi_{2} \in\C([0,T];\C^{3}[0,l]),\quad
\varphi_{3} \in\C([0,T];\C^{3}[0,1]).$
\end{description}

\begin{theorem}\label{t4.1}
Let assumptions \textbf{(h1), (h3)}, \textbf{(h4)}  and \textbf{(h5)} hold. Then there exist positive constants $C_{0}$ and $\varepsilon_{0}$, such that for each $\varepsilon\in(0,\varepsilon_{0})$ the free boundary $\Gamma^{\varepsilon}(t)$ is uniquely determined by means of the function
\begin{equation}\label{S}
\S(y_{1},t)=1+\frac{1}{\gamma}\int_{0}^{t}\chi_{1}(y_{1})\varphi_{2}(y_{1},\tau)d\tau+\frac{1}{l\gamma}\int_{0}^{t}d\tau
\int_{0}^{1}\chi_{2}(\xi_{2})[\varphi_{3}(\xi_{2},\tau)+\varphi_{1}(\xi_{2},\tau)]d\xi_{2}, \quad t\in[0,T],
\end{equation}
and the following inequality holds:
\begin{equation}\label{main_estimate}
  \|p^{\varepsilon}-\P^{\varepsilon}\|_{\C([0,T]; \, H^{1}(\Omega^{\varepsilon}(t)))}\leq
C_0 \, \varepsilon
\end{equation}
 with  $p^\varepsilon$ being  the classical solution to problem \eqref{1.1}-\eqref{1.2} provided by Theorem \ref{t3.1}, while the approximation function
 \begin{equation*}
\P^{\varepsilon}(y,t)=\mathfrak{w}_{0}(y_{1},t) + \varepsilon^{2}u_{2}(y_{1},y_{2}/\varepsilon,t), \quad y\in \Omega^{\varepsilon}(t), \ \ t\in [0,T].
\end{equation*}
Here, for each $t\in[0,T]$ the functions $\mathfrak{w}_{0}$ and $u_{2}$ are unique classical solutions to the problems
\begin{equation}\label{limit_prob}
\begin{cases}
\dfrac{\partial}{\partial y_1}\left( \mathcal{S}(y_1,t)  \dfrac{\partial \mathfrak{w}_0(y_1,t)}{\partial{y_1}}  \right) = \gamma \dfrac{\partial \mathcal{S}}{\partial t}(y_1,t) - \chi_{1}(y_{1})\, \varphi_{2}(y_{1},t), \quad y_1\in (0, l),
\\\\
\dfrac{\partial \mathfrak{w}_0}{\partial{y_1}} (0,t) =
- \langle\langle \chi_{2}( \cdot)\, \varphi_{1}(\cdot,t)\rangle\rangle_{\mathcal{S}_0},
\qquad
\dfrac{\partial \mathfrak{w}_0}{\partial{y_1}} (l,t) =
 \langle\langle \chi_{2}( \cdot)\, \varphi_{3}(\cdot,t)\rangle\rangle_{\mathcal{S}_l},
 \\\\
\displaystyle{\frac{1}{|\Gamma^{\varepsilon}(t)|}\int_{\Gamma^{\varepsilon}(t)}\mathfrak{w}_{0}\, d\ell = 0} ;
\end{cases}
\end{equation}

\begin{equation*}
\begin{cases}
- \dfrac{\partial^{2} u_2}{\partial \xi^2_2 }(y_1, {\xi}_2,t)
  =
\dfrac{\partial^{2} \mathfrak{w}_0}{\partial y_1^2} (y_1,t),
 \quad \xi_2 \in (0, \mathcal{S}(y_1,t)), \quad y_{1}\in(0,l),
\\\\
 \dfrac{\partial u_2}{\partial \xi_2}\big(y_1, \mathcal{S}(y_1,t), t\big)
  =  \dfrac{\partial \mathcal{S}}{\partial y_{1}} \, \dfrac{\partial \mathfrak{w}_0}{\partial y_{1}} - \gamma \dfrac{\partial \mathcal{S}}{\partial t}, \qquad u_{2}(y_{1},S(y_{1},t),t)=0,
	\quad y_{1}\in[0,l],
	\\\\
 \dfrac{\partial u_2}{\partial \xi_2}(y_1, 0,t)
  = - \chi_{1}(y_{1})\, \varphi_{2}(y_{1},t),
 \end{cases}
\end{equation*}
respectively, where $|\Gamma^{\varepsilon}(t)|$ denotes the length of the curve $\Gamma^{\varepsilon}(t)$  and
the value $\langle\langle \cdot \rangle\rangle_{\mathcal{S}}$ is determined in \eqref{middle_value}.
\end{theorem}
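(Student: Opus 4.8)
The plan is to combine a formal multiscale expansion in the stretched variable $\xi_2=y_2/\varepsilon$ with a rigorous energy estimate for the remainder, working with the reformulation \eqref{4.3}.

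\textbf{Rescaling and the asymptotic ansatz.} First I would pass to the $\varepsilon$-independent coordinates $(y_1,\xi_2)$, $\xi_2=y_2/\varepsilon$, which straighten $\Omega^{\varepsilon}(t)$ onto the curvilinear strip $\{0<\xi_2<\mathcal S(y_1,t)\}$ and turn $\Delta_y$ into $\partial_{y_1}^2+\varepsilon^{-2}\partial_{\xi_2}^2$; the lateral Neumann data on $\Gamma^{\varepsilon}_{1},\Gamma^{\varepsilon}_{3}$ become conditions on $\partial_{y_1}$, the condition on $\Gamma_2$ a condition on $\partial_{\xi_2}$ at $\xi_2=0$, and the Stefan relation \eqref{4.2} a condition on $\partial_{\xi_2}$ at $\xi_2=\mathcal S(y_1,t)$ carrying an extra factor $\varepsilon^{2}$. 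Substituting a power series in $\varepsilon$ for $p^{\varepsilon}$ and collecting equal powers produces a recursive chain of one-dimensional problems in $\xi_2$ of the form $-\partial^{2}_{\xi_2}u=(\text{known})$ with Neumann data at $\xi_2=0$ and $\xi_2=\mathcal S$; the compatibility condition of such a problem (integrate over $(0,\mathcal S)$) is exactly what produces the effective one-dimensional equation \eqref{limit_prob} for the leading coefficient $\mathfrak w_0$, while the $u_2$-problem is the next link in the chain. The endpoint conditions for $\mathfrak w_0$ arise from averaging the lateral data on $\Gamma^{\varepsilon}_{1},\Gamma^{\varepsilon}_{3}$ over the cross-section, and it is here that assumption \textbf{(h3)} — the support of $\chi_2$ lying strictly inside $(0,1)\subset(0,\mathcal S_0)\cap(0,\mathcal S_l)$ — lets one identify these averages with the integrals over $(0,1)$, yielding the $\langle\langle\,\cdot\,\rangle\rangle_{\mathcal S_0}$, $\langle\langle\,\cdot\,\rangle\rangle_{\mathcal S_l}$ terms; the normalization in \eqref{limit_prob} fixes the free additive constant and matches $\int_{\Gamma^{\varepsilon}(t)}p^{\varepsilon}\,d\ell=0$.

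\textbf{The free boundary.} To obtain \eqref{S} I would integrate $\Delta_y p^{\varepsilon}=0$ over the sub-rectangle $\Omega^{\varepsilon}(t)\cap\{0<y_1<s\}$, use \eqref{4.2} and \textbf{(h3)}, and differentiate in $s$; this gives a local balance relating $\partial_t\mathcal S$, the prescribed boundary fluxes, and the horizontal flux through the cross-section $y_1=s$. Feeding in the information on that flux obtained from the cell problems, and invoking Corollary~\ref{c4.1} (which gives $\mathcal S\equiv\mathcal S_0(t)$ and $\mathcal S\equiv\mathcal S_l(t)$ near the two corners, where moreover $\Phi^{\varepsilon}|_{\Gamma_2}\equiv 0$), closes this into a first-order ODE in $t$ for $\mathcal S$; integrating from $\mathcal S(\cdot,0)=1$ gives the explicit formula \eqref{S}, and one then checks consistency with Corollary~\ref{c4.1}.

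\textbf{The remainder estimate — the main obstacle.} Put $w=p^{\varepsilon}-\P^{\varepsilon}$. By construction $w$ solves a Poisson equation on $\Omega^{\varepsilon}(t)$ whose right-hand side, together with the Neumann residuals on $\Gamma^{\varepsilon}_{1},\Gamma_{2},\Gamma^{\varepsilon}_{3}$ and the residual of the conditions on $\Gamma^{\varepsilon}(t)$, is of controlled order in $\varepsilon$, with bounds expressed through $\|\varphi_i\|_{\C([0,T];\C^{3})}$ by means of \textbf{(h5)} and the regularity of $(p^{\varepsilon},\mathcal S)$ furnished by Theorem~\ref{t3.1}. Testing the equation for $w$ with $w$ and integrating over $\Omega^{\varepsilon}(t)$ yields an identity for $\|\nabla w\|^{2}_{L^{2}(\Omega^{\varepsilon}(t))}$; converting it into \eqref{main_estimate} requires Poincar\'e- and trace-type inequalities on the thin, time-dependent domain $\Omega^{\varepsilon}(t)$ with the optimal explicit dependence on $\varepsilon$ — precisely these generate the factor $\varepsilon$ on the right of \eqref{main_estimate} — together with the mean-zero property $\int_{\Gamma^{\varepsilon}(t)}w\,d\ell=0$ to control $\|w\|_{L^{2}}$; uniformity in $t$ is then automatic since all residual and geometric bounds are uniform on $[0,T]$. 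I expect the genuinely hard part to be this last step: carrying out the energy estimate on a domain that is simultaneously thin and moving, tracking constants so as not to lose powers of $\varepsilon$, and absorbing the residual generated on the free boundary $\Gamma^{\varepsilon}(t)$.
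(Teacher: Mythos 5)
The formal part of your proposal (the multiscale ansatz in $\xi_2=y_2/\varepsilon$, the compatibility condition producing \eqref{limit_prob}, the cell problem for $u_2$, and the energy argument on the remainder) is in the spirit of the paper, but there is a genuine gap in the justification step, and it is not merely ``the hard part to be careful about'' --- it requires an idea that your proposal does not contain.

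You derive the endpoint conditions in \eqref{limit_prob} by ``averaging the lateral data over the cross-section'' and then claim that, after testing the equation for $w=p^\varepsilon-\mathcal P^\varepsilon$ with $w$, the boundary residuals can be absorbed by Poincar\'e- and trace-type inequalities together with $\int_{\Gamma^\varepsilon(t)}w\,d\ell=0$. This cannot work as stated. The Neumann residual on $\Gamma^\varepsilon_1(t)$, namely $\Phi^\varepsilon-\langle\langle\chi_2(\cdot)\varphi_1(\cdot,t)\rangle\rangle_{\mathcal S_0}$, is of size $O(1)$ in sup-norm; only its \emph{integral} over the cross-section vanishes. Consequently $\|\Phi^\varepsilon-\langle\langle\cdot\rangle\rangle\|_{L^2(\Gamma^\varepsilon_1(t))}=O(\sqrt\varepsilon)$, while any trace estimate of the form $\|w\|_{L^2(\Gamma^\varepsilon_1(t))}\le C\varepsilon^{-1/2}\|\nabla w\|_{L^2(\Omega^\varepsilon(t))}$ (which is the scaling the thin geometry gives) only produces a bound $O(1)\cdot\|\nabla w\|_{L^2}$ --- one full power of $\varepsilon$ short of what \eqref{main_estimate} demands. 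Exploiting the zero mean of the residual directly on the lateral segment would require a one-dimensional Poincar\'e estimate for the trace of $w$ on $\Gamma^\varepsilon_1(t)$ involving $\partial_{y_2}w$ on that segment, which is not controlled by $\|\nabla w\|_{L^2(\Omega^\varepsilon(t))}$. In short, the mean-zero structure of the lateral residual cannot be cashed in by Poincar\'e/trace inequalities alone.

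The missing ingredient is the pair of boundary-layer correctors $\Pi_1(\xi_1,\xi_2,t)$ and $\Pi^\star_1(\xi^\star_1,\xi_2,t)$, solving the semi-infinite strip problems \eqref{prim+probl} and \eqref{prim+probl+}. They serve two purposes in the paper. First, the decay condition $\Pi_1\to0$ as $\xi_1\to+\infty$ forces the zeroth Fourier coefficient $a_0(t)$ in \eqref{view_solution} to vanish, and \emph{this} is what rigorously singles out the endpoint conditions \eqref{bv_left}, \eqref{bv_right} (your ``averaging'' heuristic gives the right formula but not a reason). Second, and crucially for the justification, multiplying by a cut-off $\chi_\delta$ and integrating by parts yields identity \eqref{int_Pi}, which converts the troublesome boundary term into the sum of a volume integral against $R^\varepsilon_3$, which is $O(\exp(-\pi\delta/(2\varepsilon)))$ because of the exponential decay of $\Pi_1$ and the location of $\operatorname{supp}\chi_\delta'$, plus $\varepsilon\int_{\Omega^\varepsilon(t)}\nabla_y(\chi_\delta\Pi_1)\cdot\nabla_y w\,dy$, whose prefactor $\varepsilon$ together with the $O(1)$ norm of $\nabla_y(\chi_\delta\Pi_1)$ in $L^2(\Omega^\varepsilon(t))$ delivers exactly the $O(\varepsilon)\|\nabla w\|_{L^2}$ needed to close \eqref{ineq_1}. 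Without $\Pi_1$ and $\Pi^\star_1$ the boundary terms in your energy identity do not absorb, and the estimate \eqref{main_estimate} does not follow. Your derivation of \eqref{S} via the flux balance over $\{0<y_1<s\}$ is a reasonable alternative to the paper's solvability-condition route, but it does not compensate for this omission.
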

The proof of this statement is given in Section \ref{s6}.
Collecting Theorem \ref{t4.1},  Corollary \ref{c4.1} and Remark \ref{r4.0}, we get  the key property of the free boundary  $\Gamma^{\varepsilon}(t)$ (see \eqref{4.00}).
\begin{corollary}\label{c4.2}
Under condition of Theorem \ref{t4.1}, the free and fixed boundaries in problem \eqref{1.1}-\eqref{1.2} form right angles
 in  $\delta$-neighborhoods of the corner points $(0,\mathcal{S}_{0}(t),t),$ $(l,\mathcal{S}_{l}(t),t)$, respectively, for $\delta=\frac{l}{5}$, $t\in[0,T]$ and $\varepsilon\in (0,\varepsilon_{0})$.  Besides, it follows from \eqref{S} that
\[
\mathcal{S}_{0}(t)=\mathcal{S}_{l}(t)=1+\frac{1}{l\gamma}\int_{0}^{t}d\tau
\int_{0}^{1}\chi_{2}(\xi_{2})[\varphi_{3}(\xi_{2},\tau)+\varphi_{1}(\xi_{2},\tau)]d\xi_{2},\quad t\in[0,T].
\]
\end{corollary}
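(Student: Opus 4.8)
The plan is to read both assertions off directly from the explicit representation \eqref{S} furnished by Theorem \ref{t4.1}, combining it with the support properties of the cut-off $\chi_1$ imposed in \textbf{(h3)} and with the qualitative information already recorded in Corollary \ref{c4.1} and Remark \ref{r4.0}. No new analysis is needed: the substantive work sits in Theorem \ref{t4.1} (the closed form of $\mathcal{S}$) and in Corollary \ref{c4.1} (flatness of the free boundary near the lateral sides), and what remains is to make the constant $\delta$ and the boundary values of $\mathcal{S}$ explicit, and to phrase flatness as a preserved right angle.

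First I would localize near $y_1=0$. By \textbf{(h3)} the function $\chi_1$ vanishes outside $(l/5,4l/5)$, hence $\chi_1\equiv 0$ on $[0,l/5]$, so the first integral in \eqref{S} vanishes identically there and \eqref{S} collapses to
\[
\mathcal{S}(y_1,t)=1+\frac{1}{l\gamma}\int_{0}^{t}d\tau\int_{0}^{1}\chi_{2}(\xi_{2})[\varphi_{3}(\xi_{2},\tau)+\varphi_{1}(\xi_{2},\tau)]\,d\xi_{2}=:\mathcal{S}_0(t),\qquad y_1\in[0,l/5],\ t\in[0,T],
\]
which is independent of $y_1$. Repeating the argument on $[4l/5,l]$, where again $\chi_1\equiv 0$, yields $\mathcal{S}(y_1,t)=\mathcal{S}_l(t)$ there with $\mathcal{S}_l(t)$ given by the very same expression; hence $\mathcal{S}_0(t)=\mathcal{S}_l(t)$, which is the announced formula. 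This also identifies the a priori (merely existential) $\delta$ of Corollary \ref{c4.1} with the concrete value $\delta=l/5$, the distance from $y_1=0$ (and from $y_1=l$) to the support of $\chi_1$; under the weaker support hypothesis of Remark \ref{r3.3} one would instead take any positive $\delta$ not exceeding that distance. From Corollary \ref{c4.1} (equivalently, from the monotonicity in $t$ of $\rho$ forced by the well-posedness condition \textbf{(h4)}, which gives $\mathcal{S}(\cdot,t)\geq\mathcal{S}(\cdot,0)=1$) we retain $\mathcal{S}_0(t)\geq 1$ and $\mathcal{S}_l(t)\geq 1$ for all $t\in[0,T]$, so the corner points are genuine: the vertical segments $\Gamma^\varepsilon_1(t),\Gamma^\varepsilon_3(t)$ have positive length.

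It remains to translate the $y_1$-independence of $\mathcal{S}$ near the lateral sides into the geometric statement. By \eqref{4.00}, on $\{y_1\in[0,l/5]\}$ the free boundary $\Gamma^\varepsilon(t)$ is the horizontal segment $\{y_2=\varepsilon\mathcal{S}_0(t)\}$, whereas the fixed-boundary piece $\Gamma^\varepsilon_1(t)$ is the vertical segment $\{y_1=0,\ 0\le y_2<\varepsilon\mathcal{S}_0(t)\}$; the two are orthogonal and meet at $(0,\varepsilon\mathcal{S}_0(t))$, i.e. at the corner point $(0,\mathcal{S}_0(t),t)$ in the normalized picture, and they remain straight within any ball of radius $l/5$ about that point. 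The same reasoning applies verbatim at $y_1=l$ on $\{y_1\in[4l/5,l]\}$, using $\Gamma^\varepsilon_3(t)$. All conclusions are uniform in the stated ranges, since \eqref{S} holds for every $t\in[0,T]$ and every $\varepsilon\in(0,\varepsilon_0)$ by Theorem \ref{t4.1}, and Corollary \ref{c4.2} simply inherits this range.

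Since everything is bookkeeping on top of the two preceding results, I expect no genuine analytic obstacle. The only points that require a moment's care are (i) reconciling the non-explicit $\delta>0$ of Corollary \ref{c4.1} with the concrete value $l/5$ read off from \eqref{S} and the support of $\chi_1$, and (ii) the (essentially trivial) observation that flatness of $\Gamma^\varepsilon(t)$ abutting a vertical fixed side is precisely the statement that the contact angle there equals a right angle and is preserved on $[0,T]$.
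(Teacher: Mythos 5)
Your proposal is correct and follows exactly the route the paper intends: the paper presents Corollary~\ref{c4.2} as an immediate consequence of collecting Theorem~\ref{t4.1}, Corollary~\ref{c4.1} and Remark~\ref{r4.0}, with no separate proof given. Your write-up merely fills in the bookkeeping (the support of $\chi_1$ in \textbf{(h3)} forces $\mathcal{S}(\cdot,t)$ to be $y_1$-independent on $[0,l/5]$ and $[4l/5,l]$, which yields $\delta=l/5$, the displayed formula for $\mathcal{S}_0=\mathcal{S}_l$, and the horizontal-versus-vertical right-angle geometry via \eqref{4.00}), so it matches the paper's argument.
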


%%%%%%%%%%%%%%%%%%%%%%%%%%%%%%%%%%%%%%%%%%%%%%%%%%%%%%%%%%%%%%%%%%%%%

%%%%%%%%%%%%%%%%%%%%%%%%%%%%%%%%%%%%%%%%%%%%%%%%%%%%%%%%%%%%%%%%%%%%%%
\section{Some additional results}
\label{s8}

First, we describe some properties (which will be very useful in our analysis in Section \ref{s5}) of the eigenvalues and eigenfunction to the following spectral problems.
$$
\begin{cases}
- \psi''(x) = \lambda \, \psi(x),\quad x\in(0,a),\\
\psi'(0)=\psi'(a)=0,
\end{cases}
\quad \text{and} \quad
\begin{cases}
- \psi''(x) = \mu \, \psi(x),\quad y\in(0,a).\\
\psi'(0)=\psi(a)=0.
\end{cases}
$$
Obviously  that eigenvalues  of the spectral problems
are equal to
\begin{equation}\label{2.0}
\lambda_{m}:=\lambda_{m}(a)=\Big(\frac{\pi m}{a}\Big)^{2}\quad\text{and}\quad \mu_{m}:=\mu_{m}(a)=\Big(\frac{\pi (m+1/2)}{a}\Big)^{2}\quad m \in \mathbb{N}_0,
\end{equation}
respectively, and the corresponding eigenfunctions
\begin{equation}\label{2.0*}
 \psi_{\lambda_{0}}= \frac{1}{\sqrt{a}}, \quad
    \psi_{\lambda_{m}}(x)= \sqrt{\frac{2}{a}} \cos\sqrt{\lambda_{m}}x\quad \text{if}\,\,\,  m\neq 0, \quad\text{and}\,\,
 \psi_{\mu_{m}}(x)=\sqrt{\frac{2}{a}}\cos\sqrt{\mu_{m}}x,\, m\in  \mathbb{N}_0,
\,\, x\in[0,a],
\end{equation}
satisfy the following relations:
$$
\|\psi_{\lambda_{m}}\|_{L_{2}(0,a)}=\|\psi_{\mu_{m}}\|_{L^{2}(0,a)}=1,\quad \langle\psi_{\lambda_{m}},\psi_{\lambda_{n}}\rangle_{a}=0,\quad
\langle\psi_{\mu_{m}},\psi_{\mu_{n}}\rangle_{a}=0\quad \text{if}\quad m\neq n.
$$

For any function $g\in L^{2}(0,a),$ we notate by
\begin{equation*}
g_{\lambda_{m}}:= \langle g,\psi_{\lambda_{m}}\rangle_{a}\quad\text{and}\quad g_{\mu_{m}}:= \langle g,\psi_{\mu_{m}}\rangle_{a},\quad m\in \mathbb{N}_0,
\end{equation*}
 Fourier coefficients regarding the basis $\{\psi_{\lambda_{m}}\}_{m\in \mathbb{N}_0}$ and  $\{\psi_{\mu_{m}}\}_{m\in \mathbb{N}_0},$
respectively.

The next property demonstrates the correlation between the smoothness of the function $g$ and the behavior of its Fourier coefficients $\{g_{\lambda_{m}}\}$ and $\{g_{\mu_{m}}\}$.
\begin{proposition}\label{p2.1}
Let $g\in\C^{2+\alpha}([0,a])$ and
\begin{equation}\label{2.1}
g(0)=g(a)=0\quad\text{and}\quad g'(0)=g'(a)=0.
\end{equation}
Then the series
$$
\sum_{m=1}^{+\infty}g_{\lambda_{m}} \psi_{\lambda_{m}}(y)\quad \text{and} \quad \sum_{m=0}^{+\infty}g_{\mu_{m}} \psi_{\mu_{m}}(y)
$$
absolutely and uniformly converge on  $[0,a]$.

Besides, for $k\in \{0,1,2\},$ the inequalities are fulfilled
\begin{equation}\label{2.2}
\sum_{m=1}^{\infty}|g_{\lambda_{m}}| \, (\lambda_{m})^{\frac{k-1}{2}}\leq C\|g\|_{\C^{2+\alpha}([0,a])},\quad
\sum_{m=0}^{\infty}|g_{\mu_{m}}| \, (\mu_{m})^{\frac{k-1}{2}}\leq C\|g\|_{\C^{2+\alpha}([0,a])}.
\end{equation}
In addition, if  $\alpha\in(\frac12, 1),$ then estimates \eqref{2.2} hold for $k=3$.
\end{proposition}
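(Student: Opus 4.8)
The plan is to reduce the whole statement to estimates for the Fourier coefficients of $g''\in\C^{\alpha}([0,a])$ by integrating by parts twice, then to treat the easy range $k\in\{0,1,2\}$ by Parseval together with Cauchy--Schwarz, and the borderline case $k=3$ by a Bernstein-type argument, which is where $\alpha>\tfrac12$ enters. Concretely, using the eigenfunctions \eqref{2.0}--\eqref{2.0*} and integrating by parts twice in $g_{\lambda_m}=\langle g,\psi_{\lambda_m}\rangle_a$, $g_{\mu_m}=\langle g,\psi_{\mu_m}\rangle_a$, I would observe that $\sin\sqrt{\lambda_m}\,a=0$ and $\cos\sqrt{\mu_m}\,a=0$, so the conditions \eqref{2.1} (namely $g'(0)=g'(a)=0$ for the $\lambda$-family, $g(a)=g'(0)=0$ for the $\mu$-family) kill all boundary terms and leave
\[
g_{\lambda_m}=-\lambda_m^{-1}\langle g'',\psi_{\lambda_m}\rangle_a\quad(m\ge1),\qquad g_{\mu_m}=-\mu_m^{-1}\langle g'',\psi_{\mu_m}\rangle_a\quad(m\ge0),
\]
with $\|g''\|_{\C^{\alpha}([0,a])}\le\|g\|_{\C^{2+\alpha}([0,a])}$. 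Hence everything reduces to bounding $\sum_m|\langle g'',\psi_{\lambda_m}\rangle_a|\,\lambda_m^{(k-3)/2}$ and $\sum_m|\langle g'',\psi_{\mu_m}\rangle_a|\,\mu_m^{(k-3)/2}$.

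For $k\in\{0,1,2\}$ this is a two-line argument: $\{\psi_{\lambda_m}\}$ and $\{\psi_{\mu_m}\}$ are orthonormal bases of $L^{2}(0,a)$, so by Parseval $\sum_m|\langle g'',\psi_{\lambda_m}\rangle_a|^2=\|g''\|_{L^{2}(0,a)}^2\le a\|g''\|_{\C^{0}([0,a])}^2$ (and likewise for the $\mu$-coefficients), whence by Cauchy--Schwarz
\[
\sum_{m\ge1}|g_{\lambda_m}|\,\lambda_m^{\frac{k-1}{2}}=\sum_{m\ge1}|\langle g'',\psi_{\lambda_m}\rangle_a|\,\lambda_m^{\frac{k-3}{2}}\le\Big(\sum_{m\ge1}|\langle g'',\psi_{\lambda_m}\rangle_a|^2\Big)^{1/2}\Big(\sum_{m\ge1}\lambda_m^{k-3}\Big)^{1/2}.
\]
Since $\lambda_m\asymp m^2$ and $\mu_m\asymp m^2$ by \eqref{2.0}, the tail series $\sum_m\lambda_m^{k-3}$, $\sum_m\mu_m^{k-3}$ (comparable to $\sum m^{2k-6}$) converge precisely for $k\le2$, which yields \eqref{2.2} — in fact already with $\|g\|_{\C^{2}([0,a])}$ on the right, the Hölder exponent not yet mattering. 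Taking $k=1$ gives $\sum_m|g_{\lambda_m}|<\infty$ and $\sum_m|g_{\mu_m}|<\infty$; as $\|\psi_{\lambda_m}\|_{\C^{0}([0,a])}=\|\psi_{\mu_m}\|_{\C^{0}([0,a])}=\sqrt{2/a}$, the Weierstrass test then gives the claimed absolute and uniform convergence of both expansions on $[0,a]$.

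For $k=3$ we have $\lambda_m^{(k-3)/2}=1$, Cauchy--Schwarz is powerless, and one must establish the genuinely stronger bound $\sum_m|\langle g'',\psi_{\lambda_m}\rangle_a|\le C\|g''\|_{\C^\alpha([0,a])}$: this is the real content of the proposition. My approach would be a Bernstein-type estimate. Reflect $g''$ evenly about both endpoints $x=0$ and $x=a$, obtaining a $2a$-periodic $H$; an even reflection requires no vanishing of $g''$, so $H\in\C^{\alpha}(\mathbb{R}/2a\mathbb{Z})$ and its Fourier series contains only the cosines $\cos(m\pi x/a)$, whose coefficients are $\sqrt{2/a}$ times $\langle g'',\psi_{\lambda_m}\rangle_a$. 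Splitting the frequencies into dyadic blocks $m\sim N$ and applying Parseval to $H(\cdot)-H(\cdot+\delta)$ with $\delta$ a suitable fraction of the block's wavelength (so that $|1-e^{im\pi\delta/a}|$ stays bounded below on a fixed proportion of the block), I get $\sum_{m\sim N}|\langle g'',\psi_{\lambda_m}\rangle_a|^2\le C\|H(\cdot)-H(\cdot+\delta)\|_{L^{2}}^2\le C\|g''\|_{\C^\alpha([0,a])}^2\,N^{-2\alpha}$; Cauchy--Schwarz over the $O(N)$ frequencies of the block gives $\sum_{m\sim N}|\langle g'',\psi_{\lambda_m}\rangle_a|\le C\,N^{\frac12-\alpha}\|g''\|_{\C^\alpha([0,a])}$, and summing over dyadic $N=2^{j}$ converges exactly because $\alpha>\tfrac12$. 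The $\mu$-case is handled by the same dyadic scheme with the reflection adapted to $\psi'(0)=\psi(a)=0$. The one delicate point is precisely this last step — passing from the free square-summability of the coefficients (Parseval) to their absolute summability for a merely $\C^\alpha$ function, i.e. the Bernstein phenomenon; the dyadic-block decomposition plus the half-wavelength shift, whose success rests on $\alpha>\tfrac12$ making $\sum_j 2^{j(\frac12-\alpha)}$ a convergent geometric series, is the heart of the argument, while Steps 1--2 are routine.
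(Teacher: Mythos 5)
Your argument for the first statement and for $k\in\{0,1,2\}$ is correct and essentially agrees with the paper's. The same double integration by parts gives $g_{\lambda_m}=-\lambda_m^{-1}\langle g'',\psi_{\lambda_m}\rangle_a$ and $g_{\mu_m}=-\mu_m^{-1}\langle g'',\psi_{\mu_m}\rangle_a$ (conditions \eqref{2.1} kill the boundary terms exactly as you note), and the estimates then follow by Parseval and Cauchy--Schwarz, with the $k=1$ bound plus Weierstrass giving the uniform convergence. The paper handles $k\in\{0,1\}$ by the cruder pointwise bound $|g''_{\lambda_m}|\le C\|g''\|_{\infty}$ and reserves Cauchy--Schwarz/Parseval for $k=2$, whereas you use Cauchy--Schwarz throughout; this difference is cosmetic. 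For $k=3$ the routes genuinely diverge: the paper simply invokes Bernstein's theorem (Corollary~2 of Chapter~2 in Bari), while you reprove it via the dyadic-block/half-wavelength-shift argument. Both are legitimate; yours has the merit of being self-contained.

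There is, however, a gap in your $k=3$ treatment of the $\mu$-family, concealed in the sentence ``the $\mu$-case is handled by the same dyadic scheme with the reflection adapted to $\psi'(0)=\psi(a)=0$.'' The adapted reflection must be even about $x=0$ and \emph{odd} about $x=a$, and an odd reflection of $g''$ about $x=a$ produces a continuous $4a$-periodic $\C^\alpha$ function only if $g''(a)=0$ --- which \eqref{2.1} does \emph{not} guarantee. This is not a technicality: if $g$ is smooth, satisfies \eqref{2.1}, and $g''(a)\ne 0$ (e.g.\ $g(x)=x^2(a-x)^2$), a single integration by parts gives $\langle g'',\psi_{\mu_m}\rangle_a=\sqrt{2/a}\,g''(a)(-1)^m\mu_m^{-1/2}+O(\mu_m^{-1})$, so $\sum_m|\langle g'',\psi_{\mu_m}\rangle_a|$ diverges like a harmonic series and the $k=3$ estimate in \eqref{2.2} actually fails for the $\mu$-family. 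To be fair, the paper's proof elides the same point (it proves the $\lambda$-estimate in detail and then asserts ``the proof for $\{g_{\mu_m}\}$ is the same,'' which is not true for $k=3$), but this is harmless in context because Proposition~\ref{p2.1} is only ever applied to functions $g=\chi\,g_0$ that vanish together with all derivatives near the endpoints (Remark~\ref{r2.1}), so $g''$ does vanish near $x=a$ and both periodic extensions are then $\C^\alpha$. In your write-up you should either add this support (or $g''(a)=0$) hypothesis to the $k=3$, $\mu$-family case or note explicitly that the dyadic argument needs it.
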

\begin{proof}
The first statement is a simple consequence of the Fourier series theory (see e.g. Chapters I-IV in \cite{Ba}). Next, we will carry out the detailed proof of inequality \eqref{2.2} in the coefficients $\{g_{\lambda_{m}}\}$. The proof for $\{g_{\mu_{m}}\}$ is the same.

Taking into account the smoothness of $g,$ conditions \eqref{2.1} and integrating twice by parts in the representation of $g_{\lambda_{m}}$, we conclude
$$
|g_{\lambda_{m}}| \leq C\Big|\int_{0}^{a}\frac{g''(y)\cos\sqrt{\lambda_{m}}y}{\lambda_{m}}dy\Big|
=\frac{C g''_{\lambda_{m}}}{\lambda_{m}}
\leq \frac{C \|g''\|_{\C([0,a])}}{\lambda_{m}}, \quad m\in \mathbb{N},
$$
where $g''_{\lambda_{m}}=\langle g'',\psi_{\lambda_{m}}\rangle_{a}$.
These inequalities arrive at the estimate
\[
\sum_{m=1}^{+\infty}|g_{\lambda_{m}}|(\lambda_{m})^{\frac{k-1}{2}}\leq \sum_{m=1}^{+\infty}|g''_{\lambda_{m}}|(\lambda_{m})^{\frac{k-3}{2}}
\]
for $k\in \{0,1,2,3\}.$
Then,  the straightforward calculations provide the inequality
\[
\sum_{m=1}^{+\infty}|g''_{\lambda_{m}}|(\lambda_{m})^{\frac{k-3}{2}}\leq C\|g''\|_{\C([0,a])}\sum_{m=1}^{+\infty}m^{k-3}\leq C\|g\|_{\C^{2}([0,a])}.
\]
for $k\in \{0,1\},$ and as a result, estimate \eqref{2.2} for those values of $k$.

From \cite[Ch. 2]{Ba}  it follows the inequalities
$$
\sum_{m=1}^{+\infty}|g''_{\lambda_{m}}|(\lambda_{m})^{\frac{k-3}{2}}\leq
C
\begin{cases}
\sqrt{\sum\limits_{m=1}^{+\infty}|g''_{\lambda_{m}}|^{2}}\sqrt{\sum\limits_{m=1}^{+\infty}m^{-2}}\quad\text{if}\quad k=2,\\
\sum\limits_{m=1}^{+\infty}|g''_{\lambda_{m}}| \quad\text{if}\quad k=3,
\end{cases}
$$
whence
$$
\sum_{m=1}^{\infty}|g''_{\lambda_{m}}|(\lambda_{m})^{\frac{k-3}{2}}\leq
C\|g\|_{\C^{2+\alpha}([0,a])}.
$$
It is worth noting that the last estimate is true for $k=3$ if $\alpha\in(1/2,1)$ (see Corollary 2  \cite[Ch.~ 2]{Ba}).
\end{proof}

Let us consider the smooth cut-off  function $\chi \in\C_{0}^{\infty}(\R)$ such that $0\le \chi \le 1$ and
\[
\chi(y)=
\begin{cases}
0,\quad y\in(-\infty, a/5]\cup[4a/5,+\infty),\\
1,\quad y\in[2a/5,3a/5].
\end{cases}
\]

\begin{remark}\label{r2.1} Clearly that the function
$g =\chi  \, g_{0},$ where $g_{0}\in\C^{2+\alpha}([0,a]),$ meets all the requirements of Proposition \ref{p2.1}.
\end{remark}

%%%%%%%%%%%%%%%%%%%%%%%%%%%%%%%%%%%%%%%%%%%%%%%%%%%%%%

%%%%%%%%%%%%%%%%%%%%%%%%%%%%%%%%%%%%%%%%%%%%%%%%%%%%%%%%%%%%%%%%%%

We conclude this preliminary section with an analogue of the Poincar\'e inequality which will play a key point in the proof of Theorem \ref{t4.1} (see Section \ref{s4.2}).

\begin{lemma}\label{l4.5}
Let  the domain $\Omega^{\varepsilon}(t)=\left\{y= (y_{1},y_{2})\in\R^{2}:\,
y_{1}\in(0,l),\, 0<y_{2}<\varepsilon\S(y_{1},t)\right\}$ have a Lipschitz boundary for each $t\in[0,T]$, and let the given function $\S\in \C([0,T];\C^{1}([0,l]))$
define the curve $\Gamma^{\varepsilon}(t)$ according to formula  \eqref{4.00}.
Then
there is a constant $C$ such that for all $\varepsilon \in (0,1)$ and
  $t\in[0,T]$ the inequality
\begin{equation}\label{Puankare}
  \|\mathfrak{Y}\|_{L^2(\Omega^\varepsilon(t))} \le C \, \|\nabla_y \mathfrak{Y} \|_{L^2(\Omega^\varepsilon(t))}
\end{equation}
 holds for every function $\mathfrak{Y} \in H^1(\Omega^\varepsilon(t))$ such that
 $\int_{\Gamma^{\varepsilon}(t)} \mathfrak{Y} \, d\ell = 0.$
\end{lemma}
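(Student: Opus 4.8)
The plan is to reduce the Poincaré-type inequality \eqref{Puankare} to a one-dimensional argument in the thin variable $y_2$, exploiting the fact that the boundary condition is imposed on the "top" curve $\Gamma^\varepsilon(t)$, which for each fixed $y_1$ is a single point $y_2=\varepsilon\mathcal S(y_1,t)$. For a fixed $t$ and a fixed $y_1\in(0,l)$, write $\mathfrak Y(y_1,\varepsilon\mathcal S(y_1,t)) - \mathfrak Y(y_1,y_2) = -\int_{y_2}^{\varepsilon\mathcal S(y_1,t)} \partial_{s}\mathfrak Y(y_1,s)\,ds$ for a.e.\ $y_1$ (first for smooth $\mathfrak Y$, then by density in $H^1$). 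Since $|\varepsilon\mathcal S(y_1,t)-y_2|\le \varepsilon\|\mathcal S\|_{\C([0,T];\C([0,l]))}$ on $\Omega^\varepsilon(t)$, the Cauchy–Schwarz inequality gives
\[
|\mathfrak Y(y_1,\varepsilon\mathcal S(y_1,t))|^2 \le 2|\mathfrak Y(y_1,y_2)|^2 + 2\varepsilon\|\mathcal S\| \int_0^{\varepsilon\mathcal S(y_1,t)} |\partial_{s}\mathfrak Y(y_1,s)|^2\,ds .
\]

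Next I would integrate this in $y_2$ over $(0,\varepsilon\mathcal S(y_1,t))$ and then in $y_1$ over $(0,l)$. On the left this produces $\varepsilon\mathcal S(y_1,t)\,|\mathfrak Y(y_1,\varepsilon\mathcal S(y_1,t))|^2$, i.e.\ (up to the factor $\varepsilon\mathcal S$, which is comparable to $\varepsilon$ since $1\le \mathcal S\le 6/5$ by \eqref{4.0}, or more generally $\mathcal S$ is bounded above and below away from $0$ by hypothesis — note I should only use the Lipschitz/$\C^1$ bound and that $\mathcal S$ stays away from $0$, which follows from $\|\mathcal S\|<6/5$ together with the construction; in the abstract statement of Lemma \ref{l4.5} one uses that $\Omega^\varepsilon(t)$ is a Lipschitz graph domain, so I will assume $\mathcal S$ is bounded below by a positive constant) a multiple of $\int_0^l |\mathfrak Y(y_1,\varepsilon\mathcal S(y_1,t))|^2\,dy_1$, which is exactly $\|\mathfrak Y\|_{L^2(\Gamma^\varepsilon(t))}^2$ up to the arclength weight $\sqrt{1+\varepsilon^2|\partial_{y_1}\mathcal S|^2}\in[1,C]$. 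On the right the first term becomes $2\|\mathfrak Y\|_{L^2(\Omega^\varepsilon(t))}^2$ and the second, after using $\int_0^{\varepsilon\mathcal S(y_1,t)} dy_2 \le \varepsilon\|\mathcal S\|$, becomes $\le C\varepsilon^2 \|\partial_{y_2}\mathfrak Y\|_{L^2(\Omega^\varepsilon(t))}^2 \le C\varepsilon^2\|\nabla_y\mathfrak Y\|_{L^2(\Omega^\varepsilon(t))}^2$. Thus
\[
\|\mathfrak Y\|_{L^2(\Gamma^\varepsilon(t))}^2 \le C\varepsilon^{-1}\|\mathfrak Y\|_{L^2(\Omega^\varepsilon(t))}^2 + C\varepsilon\|\nabla_y\mathfrak Y\|_{L^2(\Omega^\varepsilon(t))}^2,
\]
a trace-type bound that does not yet invoke the mean-zero condition.

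To finish I would run the same vertical-slice computation the other way: for a.e.\ $y_1$ and every $y_2\in(0,\varepsilon\mathcal S(y_1,t))$, $\mathfrak Y(y_1,y_2) = \mathfrak Y(y_1,\varepsilon\mathcal S(y_1,t)) + \int_{y_2}^{\varepsilon\mathcal S} \partial_s\mathfrak Y\,ds$, square, integrate over the slice and then over $(0,l)$, giving
\[
\|\mathfrak Y\|_{L^2(\Omega^\varepsilon(t))}^2 \le C\varepsilon\,\|\mathfrak Y\|_{L^2(\Gamma^\varepsilon(t))}^2 + C\varepsilon^2\|\partial_{y_2}\mathfrak Y\|_{L^2(\Omega^\varepsilon(t))}^2 .
\]
Now I bring in $\int_{\Gamma^\varepsilon(t)}\mathfrak Y\,d\ell=0$: writing $\overline{\mathfrak Y}(y_1):=\mathfrak Y(y_1,\varepsilon\mathcal S(y_1,t))$, the mean-zero condition controls only the average over $\Gamma^\varepsilon(t)$, so I need a one-dimensional Poincaré inequality on the interval $(0,l)$ for $\overline{\mathfrak Y}$ together with a bound on $\|\partial_{y_1}\overline{\mathfrak Y}\|_{L^2(0,l)}$. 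Here $\partial_{y_1}\overline{\mathfrak Y}(y_1) = \partial_{y_1}\mathfrak Y + \varepsilon\,\partial_{y_1}\mathcal S\,\partial_{y_2}\mathfrak Y$ evaluated on $\Gamma^\varepsilon(t)$; the subtlety is that a pointwise (in $y_1$) trace of the gradient is not an $L^2$ object for a generic $H^1$ function, so instead I control $\|\overline{\mathfrak Y} - \langle\overline{\mathfrak Y}\rangle\|_{L^2(0,l)}$ directly via the trace estimate above applied to $\mathfrak Y$ minus a vertical-average function. Concretely, set $Z(y_1):=\langle\langle \mathfrak Y(y_1,\cdot)\rangle\rangle_{\varepsilon\mathcal S}$ (the mean over the slice); then $\|\mathfrak Y - Z\|_{L^2(\Omega^\varepsilon(t))}\le C\varepsilon\|\partial_{y_2}\mathfrak Y\|_{L^2(\Omega^\varepsilon(t))}$ by the slice Poincaré inequality with Neumann-type average, $Z$ is an $H^1(0,l)$ function with $\|\partial_{y_1}Z\|_{L^2(0,l)}\le C\varepsilon^{-1/2}\|\nabla_y\mathfrak Y\|_{L^2(\Omega^\varepsilon(t))}$ after accounting for the $\varepsilon$-weight in the slice, and the trace of $\mathfrak Y$ on $\Gamma^\varepsilon$ differs from $Z$ by something controlled by $\varepsilon\|\partial_{y_2}\mathfrak Y\|$. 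Applying one-dimensional Poincaré to $Z$ on $(0,l)$ (legitimate once the averaged mean-zero condition is transferred from $\mathfrak Y|_{\Gamma^\varepsilon}$ to $Z$ with an $O(\varepsilon)$ error), and combining all the pieces, the $\varepsilon$ powers balance to yield $\|\mathfrak Y\|_{L^2(\Omega^\varepsilon(t))}\le C\|\nabla_y\mathfrak Y\|_{L^2(\Omega^\varepsilon(t))}$ with $C$ independent of $\varepsilon\in(0,1)$ and $t\in[0,T]$.

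The main obstacle is precisely this last bookkeeping of $\varepsilon$ powers: the naive trace bound loses a factor $\varepsilon^{-1}$, which must be recovered from the $\varepsilon$ gained by the thinness of the domain in the vertical direction, and the mean-zero hypothesis is on $\Gamma^\varepsilon(t)$ rather than on $\Omega^\varepsilon(t)$, so one cannot subtract a constant but must instead pass through the horizontal profile $Z(y_1)$ and a one-dimensional Poincaré inequality on $(0,l)$. Keeping all constants uniform in $\varepsilon$ and $t$ requires using only the $\C([0,T];\C^1([0,l]))$-bound on $\mathcal S$ and the uniform lower bound $\mathcal S\ge c_0>0$ coming from the Lipschitz-graph assumption; the $t$-continuity plays no role beyond ensuring these bounds are attained uniformly on $[0,T]$.
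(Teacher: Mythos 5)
Your proof is correct, and it proceeds by a genuinely different route from the paper's. Both begin with the slice identity $\mathfrak{Y}(y_1,\varepsilon\mathcal{S}(y_1,t)) = \mathfrak{Y}(y_1,y_2)+\int_{y_2}^{\varepsilon\mathcal{S}(y_1,t)}\partial_z\mathfrak{Y}\,dz$, which yields the paper's inequality \eqref{l_2} (your estimate $\|\mathfrak Y\|_{L^2(\Omega^\varepsilon(t))}^2 \le C\varepsilon\|\mathfrak Y\|_{L^2(\Gamma^\varepsilon(t))}^2 + C\varepsilon^2\|\partial_{y_2}\mathfrak Y\|_{L^2(\Omega^\varepsilon(t))}^2$). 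From there the paper never introduces your vertical average $Z$; instead it expands $\bigl(\mathfrak{Y}(y_1',\varepsilon\mathcal{S}(y_1',t))-\mathfrak{Y}(y_1,\varepsilon\mathcal{S}(y_1,t))\bigr)^2$ for two independent abscissae $y_1,y_1'$, bounds the difference by a down--across--up zigzag path through a common level $y_2\in(0,\varepsilon)$ (inequality \eqref{l_3}), and then integrates over $y_2$, $y_1$, $y_1'$ against both arclength densities to obtain directly the boundary Poincar\'e--Wirtinger estimate \eqref{l_4}: $\varepsilon\int_{\Gamma^\varepsilon(t)}\mathfrak{Y}^2\,d\ell \le \varepsilon\bigl(\int_{\Gamma^\varepsilon(t)}\mathfrak{Y}\,d\ell\bigr)^2 + C\|\nabla_y\mathfrak{Y}\|^2_{L^2(\Omega^\varepsilon(t))}$. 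The mean-zero hypothesis kills the first term on the right, and combining with \eqref{l_2} finishes. The advantage of this two-point trick is that the arclength weight is built in from the outset, so the zero-mean condition on $\Gamma^\varepsilon(t)$ is used verbatim and there is nothing to transfer. Your route, through the vertical mean $Z(y_1)=\langle\langle\mathfrak Y(y_1,\cdot)\rangle\rangle_{\varepsilon\mathcal{S}}$ and the one-dimensional Poincar\'e inequality on $(0,l)$, is conceptually clean and the $\varepsilon$-powers do balance exactly as you describe: $\|Z\|_{L^2(\Omega^\varepsilon(t))}\le C\varepsilon^{1/2}\|Z\|_{L^2(0,l)}$ cancels the $\varepsilon^{-1/2}$ in $\|\partial_{y_1}Z\|_{L^2(0,l)}\le C\varepsilon^{-1/2}\|\nabla_y\mathfrak Y\|_{L^2(\Omega^\varepsilon(t))}$. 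The one step you should make explicit is the mean-zero transfer you flag parenthetically: the hypothesis controls $\int_0^l\overline{\mathfrak Y}\,g\,dy_1$ with $g=\sqrt{1+\varepsilon^2|\partial_{y_1}\mathcal{S}|^2}$, whereas the standard 1D Poincar\'e--Wirtinger inequality subtracts the unweighted mean of $Z$. The cleanest fix is to use the $g$-weighted mean $\langle Z\rangle_g=\bigl(\int_0^l Zg\,dy_1\bigr)\big/\bigl(\int_0^l g\,dy_1\bigr)$, for which $\|Z-\langle Z\rangle_g\|_{L^2(0,l)}\le C\|\partial_{y_1}Z\|_{L^2(0,l)}$ still holds with $C$ uniform in $\varepsilon$ since $1\le g\le C$, and then $|\langle Z\rangle_g|\le C\|Z-\overline{\mathfrak Y}\|_{L^2(0,l)}\le C\varepsilon^{1/2}\|\partial_{y_2}\mathfrak Y\|_{L^2(\Omega^\varepsilon(t))}$ follows immediately from $\int_{\Gamma^\varepsilon(t)}\mathfrak Y\,d\ell=0$. (In passing, $\|\overline{\mathfrak Y}-Z\|_{L^2(0,l)}$ is of order $\varepsilon^{1/2}\|\partial_{y_2}\mathfrak Y\|$, not $\varepsilon\|\partial_{y_2}\mathfrak Y\|$ as you wrote; this does not change the outcome.) If one instead uses the unweighted mean, an extra $O(\varepsilon^2)\|Z\|_{L^2(0,l)}$ term appears that must be absorbed, which is harmless for small $\varepsilon$ but less tidy if one insists on uniformity over all of $\varepsilon\in(0,1)$ in one stroke.
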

\begin{proof} We fixate $t\in [0,T]$ and  prove first this lemma for  a smooth function  $\mathfrak{Y} \in C^1\big(\overline{\Omega^\varepsilon(t)}\big).$ Taking into advantage of the easily verified identity
\begin{equation*}
\mathfrak{Y}(y_1, \varepsilon \mathcal{S}(y_1,t)) = \int_{y_2}^{\varepsilon \mathcal{S}(y_1,t)} \frac{\partial \mathfrak{Y}}{\partial z}(y_1, z)\, dz +  \mathfrak{Y}(y_1,y_2)
\quad \forall\, (y_1, y_2) \in \Omega^\varepsilon(t),
\end{equation*}
we deduce the inequalities
\begin{equation}\label{l_2}
\int_{\Omega^\varepsilon(t)} \mathfrak{Y}^2\, dy  \le C \left( \varepsilon^2 \int_{\Omega^\varepsilon(t)} \Big(\frac{\partial \mathfrak{Y}}{\partial y_2}\Big)^2 dy +
  \varepsilon \int_{\Gamma^{\varepsilon}(t)} \mathfrak{Y}^2  \, d\ell \right),
\end{equation}

\begin{multline}\label{l_3}
\mathfrak{Y}^2(y'_1, \varepsilon \mathcal{S}(y'_1,t))  - 2\, \mathfrak{Y}(y'_1, \varepsilon \mathcal{S}(y'_1,t)) \, \mathfrak{Y}(y_1, \varepsilon \mathcal{S}(y_1,t))  + \mathfrak{Y}^2(y_1, \varepsilon \mathcal{S}(y_1,t))
  \\
  \le C \left(\varepsilon \int_{0}^{\varepsilon \mathcal{S}(y'_1,t)} \Big(\frac{\partial \mathfrak{Y}}{\partial z_{2}}(y'_1, z_{2})\Big)^2  dz_{2} + \varepsilon \int_{0}^{\varepsilon \mathcal{S}(y_1,t)} \Big(\frac{\partial \mathfrak{Y}}{\partial z_{2}}(y_1, z_{2})\Big)^2  dz_{2} +
  \int_{0}^{l} \Big(\frac{\partial \mathfrak{Y}}{\partial z_{1}}(z_{1}, y_2)\Big)^2  dz_{1}\right)
\end{multline}
for each $y_1$ and $y'_1$ from the interval $(0, l)$ and any $y_2 \in (0, \varepsilon).$

After that  we integrate  inequality \eqref{l_3} with respect to $y_2 \in (0, \varepsilon).$ Then, we multiply the obtained inequality with
$\sqrt{1 + \varepsilon^2 \big(\frac{\partial \mathcal{S}}{\partial y_1}(y_1,t)\big)^2}$ and integrate it with respect to $y_1 \in (0, l).$
Finally,  multiplying the newly obtained inequality with  $\sqrt{1 + \varepsilon^2 \big(\frac{\partial \mathcal{S}}{\partial y'_1}(y'_1,t)\big)^2}$ and integrating with respect to $y_{1}'\in(0,l)$, we reach the estimate
\begin{align}\label{l_4}\notag
 \varepsilon \int_{\Gamma^{\varepsilon}(t)} \mathfrak{Y}^2  \, d\ell &\le
 \varepsilon \left(\int_{\Gamma^{\varepsilon}(t)} \mathfrak{Y} \, d\ell\right)^2 +
 C \left(\varepsilon^2  \int_{\Omega^\varepsilon(t)} \Big(\frac{\partial \mathfrak{Y}}{\partial y_2}\Big)^2 dy + \int_{\Omega^\varepsilon(t)} \Big(\frac{\partial \mathfrak{Y}}{\partial y_1}\Big)^2\, dy
   \right)\\
	&
	\le \varepsilon \left(\int_{\Gamma^{\varepsilon}(t)} \mathfrak{Y} \, d\ell\right)^2 +
 C \int_{\Omega^\varepsilon(t)} |\nabla_y \mathfrak{Y}|^2 dy.
\end{align}
Exploiting standard  approximation procedure, we conclude that  inequalities \eqref{l_2} and  \eqref{l_4}  hold for any function $\mathfrak{Y} \in H^1(\Omega^\varepsilon(t)).$
Since $\int_{\Gamma^{\varepsilon}(t)} \mathfrak{Y} \, d\ell = 0,$ estimate \eqref{Puankare} follows
from \eqref{l_2} and  \eqref{l_4}.
\end{proof}

\begin{corollary}
Estimate \eqref{l_4} immediately leads to an analogue of the Poincar\'e--Wirtinger inequality
\begin{equation}\label{Poi_Wirtinger}
\left\|\mathfrak{Y} - \frac{1}{|\Gamma^{\varepsilon}(t)|}\int_{\Gamma^{\varepsilon}(t)}\mathfrak{Y} \, d\ell\right\|_{L^2(\Gamma^\varepsilon(t))} \le \frac{C}{\sqrt{\varepsilon}} \, \|\nabla_y \mathfrak{Y}\|_{L^2(\Omega^\varepsilon(t))}
\quad \forall\, \mathfrak{Y} \in H^1(\Omega^\varepsilon(t)),
\end{equation}
where the constant $C$ is independent of $\mathfrak{Y}$ and $\varepsilon.$
\end{corollary}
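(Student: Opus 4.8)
The plan is to obtain \eqref{Poi_Wirtinger} directly from estimate \eqref{l_4} of Lemma \ref{l4.5} by a simple translation argument. Fix $t\in[0,T]$ and put
\[
\bar{\mathfrak{Y}} := \frac{1}{|\Gamma^{\varepsilon}(t)|}\int_{\Gamma^{\varepsilon}(t)}\mathfrak{Y}\, d\ell ,
\]
which is well defined for every $\mathfrak{Y}\in H^{1}(\Omega^{\varepsilon}(t))$ because the trace of $\mathfrak{Y}$ on the Lipschitz curve $\Gamma^{\varepsilon}(t)$ lies in $L^{2}(\Gamma^{\varepsilon}(t))$ and $|\Gamma^{\varepsilon}(t)|\geq l>0$. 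Apply \eqref{l_4} with $\mathfrak{Y}$ replaced by $\mathfrak{Z}:=\mathfrak{Y}-\bar{\mathfrak{Y}}\in H^{1}(\Omega^{\varepsilon}(t))$. Since $\int_{\Gamma^{\varepsilon}(t)}\mathfrak{Z}\, d\ell=0$, the first term on the right-hand side of \eqref{l_4} vanishes, while $\nabla_{y}\mathfrak{Z}=\nabla_{y}\mathfrak{Y}$. Hence
\[
\varepsilon\int_{\Gamma^{\varepsilon}(t)}\big(\mathfrak{Y}-\bar{\mathfrak{Y}}\big)^{2}\, d\ell \le C\int_{\Omega^{\varepsilon}(t)}|\nabla_{y}\mathfrak{Y}|^{2}\, dy ,
\]
and dividing by $\varepsilon$ and extracting square roots gives \eqref{Poi_Wirtinger}, the constant being inherited from \eqref{l_4}.

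Two points deserve a brief comment. First, the extension of \eqref{l_4} from $C^{1}(\overline{\Omega^{\varepsilon}(t)})$ to all of $H^{1}(\Omega^{\varepsilon}(t))$ was already performed inside the proof of Lemma \ref{l4.5}, so \eqref{l_4} is legitimately available for $\mathfrak{Z}$. Second, the constant $C$ in \eqref{l_4} is uniform in $t\in[0,T]$ under the standing assumption $\mathcal{S}\in\C([0,T];\C^{1}([0,l]))$: it depends only on $l$ and on $\|\mathcal{S}\|_{\C([0,T];\C^{1}([0,l]))}$ through the arc-length Jacobian factors $\sqrt{1+\varepsilon^{2}(\partial_{y_{1}}\mathcal{S})^{2}}\le\sqrt{1+\|\partial_{y_{1}}\mathcal{S}\|_{\infty}^{2}}$, which are bounded uniformly for $\varepsilon\in(0,1)$. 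Consequently \eqref{Poi_Wirtinger} holds with one and the same constant $C$ for all $t\in[0,T]$ and all $\varepsilon\in(0,1)$, independent of $\mathfrak{Y}$.

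There is essentially no obstacle in this argument; the only thing to be careful about is that one must subtract precisely the $\Gamma^{\varepsilon}(t)$-average with the normalization $1/|\Gamma^{\varepsilon}(t)|$, so that the quadratic boundary term $\varepsilon\big(\int_{\Gamma^{\varepsilon}(t)}\mathfrak{Y}\, d\ell\big)^{2}$ appearing in \eqref{l_4} is annihilated exactly. Any other centering (for instance the $\Omega^{\varepsilon}(t)$-average) would leave a residual term that cannot be controlled by $\|\nabla_{y}\mathfrak{Y}\|_{L^{2}(\Omega^{\varepsilon}(t))}$ alone, and the $\varepsilon^{-1/2}$ scaling would be lost.
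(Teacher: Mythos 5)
Your argument is correct and coincides with the (implicit) proof the paper intends: applying \eqref{l_4} to $\mathfrak{Y}-\bar{\mathfrak{Y}}$, where $\bar{\mathfrak{Y}}$ is the $\Gamma^{\varepsilon}(t)$-average, kills the quadratic boundary term, leaves the gradient untouched, and dividing by $\varepsilon$ and taking square roots yields \eqref{Poi_Wirtinger} with the same generic constant. Your remarks on the $H^1$ extension already carried out in the proof of Lemma \ref{l4.5} and on the uniformity of $C$ in $t$ and $\varepsilon$ are accurate and complete the justification.
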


\begin{remark}
  It should be noted here that determining the optimal constant in Poincar\'e inequalities is, in general, a very hard task (see \cite{Aco-Dur,Pay-Wei}). In our case it is very important to know how constants in such inequalities depend on the parameter $\varepsilon$ $($see \eqref{Puankare} and \eqref{Poi_Wirtinger}$)$.
\end{remark}

%%%%%%%%%%%%%%%%%%%%%%%%%%%%%%%%%%%%%%%%%%%%%%%%%%%%%%%%%%%%%%%%%%%%%

%%%%%%%%%%%%%%%%%%%%%%%%%%%%%%%%%%%%%%%%%%%%%%%%%%%%%%%%%%%%%%%%%%%%%%
\section{Proof of Theorem \ref{t3.1}}
\label{s5}

\noindent
The strategy of the proof is the following: first, we show that, within our assumptions on the function $\Phi^{\varepsilon}$,
 the initial pressure $p^{\varepsilon}_{0}=p^{\varepsilon}(y,0)$ belongs to the class $\C^{3+\alpha}(\bar{\Omega}^{\varepsilon})$
 for any fixed $\varepsilon>0$. Then, using like Hanzawa transformation \cite{Ha}, we reduce problem \eqref{1.2} in the domain \eqref{1.1}
 with the moving boundary $\Gamma^{\varepsilon}(t)$ to a nonlinear problem in the fixed domain $\Omega^{\varepsilon}_{ T}$.
  After that we linearize this nonlinear problem on the initial data $p^{\varepsilon}_{0}$ and on a special function $s(x_{1},t)$
  connected with the initial shape of the free boundary $\Gamma^{\varepsilon},$ and solve the linear problem in $\CC^{2+\alpha}(\bar{\Omega}^{\varepsilon}_{T})$.
   Finally, using contraction mapping theorem, we prove the local one-to-one solvability of the corresponding nonlinear problem.

%%%%%%%%%%%%%%%%%%%%%%%%%%%%%%%%%%%%%%%%%%%%%%%%%%%%%%%%%%%%%%%%%%%%%

%%%%%%%%%%%%%%%%%%%%%%%%%%%%%%%%%%%%%%%%%%%%%%%%%%%%%%%%%%%%%%%%%%%%%%
\subsection{Smoothness of the initial pressure $p^{\varepsilon}_{0}$}
\label{s5.1}

\noindent
Denoting (see assumption \textbf{(h3)})
\[
\bar{\varphi}_{1}(y_{2},t)=-\chi_{2}(y_{2}/\varepsilon)\varphi_{1}(y_{2}/\varepsilon,t),\quad
\bar{\varphi}_{2}(y_{1},t)=-\varepsilon\chi_{1}(y_{1})\varphi_{2}(y_{1},t),\quad
\bar{\varphi}_{3}(y_{2},t)=\chi_{2}(y_{2}/\varepsilon)\varphi_{3}(y_{2}/\varepsilon,t),
\]
and taking into account assumptions \textbf{(h1)-(h4)}, we conclude that
the initial pressure
$p^{\varepsilon}_{0}:\Omega^{\varepsilon}\to\R$ solves the
following boundary-value problem for each fixed $\varepsilon>0$:
\begin{equation}\label{5.1}
\begin{cases}
\Delta p^{\varepsilon}_{0}=0\quad\text{in}\quad \Omega^{\varepsilon},\\
p^{\varepsilon}_{0}=0\quad\text{on}\quad \Gamma^{\varepsilon},\\
\dfrac{\partial p^{\varepsilon}_{0}}{\partial y_{1}}=\bar{\varphi}_{1}(y_{2},0)\quad\text{on}\quad \Gamma^{\varepsilon}_{1},\\
\dfrac{\partial p^{\varepsilon}_{0}}{\partial y_{2}}=\bar{\varphi}_{2}(y_{1},0)\quad\text{on}\quad \Gamma_{2},\\
\dfrac{\partial p^{\varepsilon}_{0}}{\partial
y_{1}}=\bar{\varphi}_{3}(y_{2},0)\quad\text{on}\quad
\Gamma^{\varepsilon}_{3}.
\end{cases}
\end{equation}

Introducing new functions
\begin{align*}
\PP_{0}&=\varepsilon^{-1/2}(y_{2}-\varepsilon)\bar{\varphi}_{2,0}(0),\quad
\PP_{1}=\sum_{m=1}^{\infty}\frac{\bar{\varphi}_{2,m}(0)}{\sqrt{\lambda_{m}}}\frac{\sinh((y_{2}-\varepsilon)\sqrt{\lambda_{m}})}{\cosh(\varepsilon\sqrt{\lambda_{m}})}\psi_{\lambda_{m}}(y_{1}),\\
\PP_{2}&=\sum_{m=0}^{\infty}\frac{\bar{\varphi}_{3,m}(0)\cosh(y_{1}\sqrt{\mu_{m}})-\bar{\varphi}_{1,m}(0)\cosh((l-y_{1})\sqrt{\mu_{m}})}{\sqrt{\mu_{m}}\sinh(l\sqrt{\mu_{m}})}
\psi_{\mu_{m}}(y_{2}),
\end{align*}
where $\lambda_{m}=\lambda_{m}(\varepsilon)$ and $\mu_{m}=\mu_{m}(l)$ are defined with \eqref{2.0}, and
 $\bar{\varphi}_{1,m}(t)=\langle \bar{\varphi}_{1},\psi_{\mu_{m}}\rangle_{\varepsilon},$
$\bar{\varphi}_{2,m}(t)=\langle \bar{\varphi}_{2},\psi_{\lambda_{m}}\rangle_{l},$
$\bar{\varphi}_{3,m}(t)=\langle \bar{\varphi}_{3},\psi_{\mu_{m}}\rangle_{\varepsilon},$ we assert the following result.

\begin{lemma}\label{l5.1}
Let $\alpha\in(0,1)$, $\varepsilon>0$ be arbitrarily fixed and let
assumptions \textbf{(h1)-(h3)} hold. Then boundary-value problem
\eqref{5.1} admits a unique classical solution
\begin{equation}\label{5.2}
p_{0}^{\varepsilon}=\PP_{0}+\PP_{1}+\PP_{2}
\end{equation}
in the domain $\Omega^{\varepsilon}$, satisfying the regularity
$
p^{\varepsilon}_{0}\in\C^{2+\alpha}(\bar{\Omega}^{\varepsilon}),
$
and
\[
\|p^{\varepsilon}_{0}\|_{\C^{2+\alpha}(\bar{\Omega}^{\varepsilon})}\leq
C[ \|\varphi_{1}\|_{\CC^{2+\alpha}(\Gamma^{\varepsilon}_{1,T})}+
\|\varphi_{2}\|_{\CC^{2+\alpha}(\Gamma_{2,T})}+\|\varphi_{3}\|_{\CC^{2+\alpha}(\Gamma^{\varepsilon}_{3,T})}].\]
Besides, under condition
\begin{equation}\label{3.3}
\sqrt{\frac{2}{l}}\sum_{m=0}^{\infty}\frac{(-1)^{m+1}[\bar{\varphi}_{3,m}(0)\cosh(y_{1}\sqrt{\mu_{m}})-
\bar{\varphi}_{1,m}(0)\cosh((l-y_{1})\sqrt{\mu_{m}})]}{\sinh(l\sqrt{\mu_{m}})}
+\sum_{m=0}^{\infty}\frac{\bar{\varphi}_{2,m}(0)\psi_{\lambda_{m}}(y_{1})}{\cosh(\varepsilon\sqrt{\lambda_{m}})}
<0,
\end{equation}
 the function $p^{\varepsilon}_{0}$
satisfies the inequality
\begin{equation}\label{5.3}
\frac{\partial p_{0}^{\varepsilon}}{\partial y_{2}}<
0\quad\text{on}\quad\Gamma^{\varepsilon},
\end{equation}
which provides estimate \eqref{3.4}.

 If in addition
$\alpha\in(1/2,1)$, then
$p^{\varepsilon}_{0}\in\C^{3}(\bar{\Omega}^{\varepsilon})$ and
\begin{equation}\label{5.4}
\|p^{\varepsilon}_{0}\|_{\C^{3}(\bar{\Omega}^{\varepsilon})}\leq
C[ \|\varphi_{1}\|_{\CC^{2+\alpha}(\Gamma^{\varepsilon}_{1,T})}+
\|\varphi_{2}\|_{\CC^{2+\alpha}(\Gamma_{2,T})}+\|\varphi_{3}\|_{\CC^{2+\alpha}(\Gamma^{\varepsilon}_{3,T})}].
\end{equation}
\end{lemma}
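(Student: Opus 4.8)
The plan is to construct $p_0^\varepsilon$ explicitly by separation of variables and then read every assertion of the lemma off the resulting series. Since at $t=0$ the domain $\Omega^\varepsilon=\Omega^\varepsilon(0)$ is the rectangle $(0,l)\times(0,\varepsilon)$ carrying a homogeneous Dirichlet condition on the top side $\Gamma^\varepsilon$ and Neumann data on the other three sides, I would split $p_0^\varepsilon=\PP_0+\PP_1+\PP_2$ as follows: $\PP_0$ is the affine function of $y_2$ absorbing the $y_1$-mean of the bottom datum $\bar\varphi_2(\cdot,0)$; $\PP_1$ is the Fourier expansion of the mean-free part of $\bar\varphi_2(\cdot,0)$ in the Neumann basis $\{\psi_{\lambda_m}\}$ on $[0,l]$, each mode continued in $y_2$ by the hyperbolic profile vanishing at $y_2=\varepsilon$; and $\PP_2$ is the Fourier expansion of the lateral data $\bar\varphi_1(\cdot,0),\bar\varphi_3(\cdot,0)$ in the mixed basis $\{\psi_{\mu_m}\}$ on $[0,\varepsilon]$ (Neumann at $y_2=0$, Dirichlet at $y_2=\varepsilon$), each mode continued in $y_1$ by a combination of $\cosh(y_1\sqrt{\mu_m})$ and $\cosh((l-y_1)\sqrt{\mu_m})$. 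Every summand is a separated solution of $\Delta=0$, and, thanks to $\psi_{\lambda_m}'(0)=\psi_{\lambda_m}'(l)=0$, $\psi_{\mu_m}'(0)=0$ and $\psi_{\mu_m}(\varepsilon)=0$, the three pieces decouple: $\PP_0,\PP_1$ contribute nothing on the two vertical sides, $\PP_2$ contributes nothing on the bottom side, and all three vanish on $\Gamma^\varepsilon$. Matching the remaining Neumann traces against the Fourier coefficients $\bar\varphi_{j,m}(0)$ fixes all constants and produces exactly representation \eqref{5.2}.

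For convergence, regularity and the estimate I would invoke Proposition \ref{p2.1}. By \textbf{(h3)} each $\bar\varphi_j(\cdot,0)$ is a smooth compactly supported cut-off times a $\C^{2+\alpha}$ function, hence vanishes to infinite order at both endpoints of its interval, so it satisfies the compatibility conditions \eqref{2.1} (Remark \ref{r2.1}). Proposition \ref{p2.1} then gives absolute and uniform convergence on $\bar\Omega^\varepsilon$ of $\PP_1$, $\PP_2$ and of all series obtained by differentiating them up to second order, since $k$-fold differentiation costs only a factor $O(\lambda_m^{k/2})$ (resp. $O(\mu_m^{k/2})$) while the hyperbolic factors are uniformly bounded: $|\sinh((y_2-\varepsilon)\sqrt{\lambda_m})|/\cosh(\varepsilon\sqrt{\lambda_m})\le1$ for $0\le y_2\le\varepsilon$, and $\cosh(y_1\sqrt{\mu_m})/\sinh(l\sqrt{\mu_m})\le C$, $\cosh((l-y_1)\sqrt{\mu_m})/\sinh(l\sqrt{\mu_m})\le C$ for $0\le y_1\le l$. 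Combining this with \eqref{2.2} for $k=0,1,2$ bounds $\|\PP_1\|_{\C^{2+\alpha}(\bar\Omega^\varepsilon)}+\|\PP_2\|_{\C^{2+\alpha}(\bar\Omega^\varepsilon)}$ by the right-hand side of the claimed estimate; for the Hölder seminorm of the second derivatives one splits each series at frequency $\sim|y-\bar y|^{-1}$, using $|\psi_{\lambda_m}(y)-\psi_{\lambda_m}(\bar y)|\le C\sqrt{\lambda_m}\,|y-\bar y|$ on the low modes and the sharp decay $|\bar\varphi_{j,m}(0)|=O(m^{-2-\alpha})$ (two integrations by parts with no boundary terms, plus the fact that the second derivative of $\bar\varphi_j(\cdot,0)$ lies in $\C^{\alpha}$) on the high modes. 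Since $\PP_0$ is affine, this gives $p_0^\varepsilon\in\C^{2+\alpha}(\bar\Omega^\varepsilon)$ with the stated bound, and uniform convergence up to $\partial\Omega^\varepsilon$ shows that the boundary conditions in \eqref{5.1} are attained. Uniqueness is elementary: the difference of two classical solutions is harmonic, vanishes on $\Gamma^\varepsilon$ and has vanishing normal derivative on $\partial\Omega^\varepsilon\setminus\Gamma^\varepsilon$, so integration by parts gives $\int_{\Omega^\varepsilon}|\nabla(\cdot)|^2=0$, and the Dirichlet condition then forces the difference to vanish.

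It remains to check \eqref{5.3}--\eqref{3.4} and the higher regularity. Differentiating the three series in $y_2$ and evaluating at $y_2=\varepsilon$, the derivative of $\PP_2$ produces — through $\psi_{\mu_m}'(\varepsilon)$, which carries the sign $(-1)^{m+1}$ — the first sum in \eqref{3.3}, while the derivative of $\PP_0+\PP_1$ produces the second sum (the $m=0$ term coming from $\PP_0$); hence, up to the evident positive normalization, condition \eqref{3.3} is precisely the inequality $\partial p_0^\varepsilon/\partial y_2<0$ on $\Gamma^\varepsilon$, i.e. \eqref{5.3}. Since at $t=0$ the free boundary is the flat segment $\{y_2=\varepsilon\}$, there $\mathbf{n}_{t}=(0,1)$ and the Stefan condition in \eqref{1.2} reads $\partial p_0^\varepsilon/\partial y_2=-\gamma V_{\mathbf{n}}|_{t=0}$, so \eqref{5.3} gives $V_{\mathbf{n}}|_{t=0}>0$ on $\Gamma^\varepsilon$, which is \eqref{3.4}. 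Finally, for $\alpha\in(1/2,1)$ Proposition \ref{p2.1} additionally supplies \eqref{2.2} with $k=3$, so the third-order differentiated series of $\PP_1,\PP_2$ converge uniformly on $\bar\Omega^\varepsilon$ with the same bound; using $\Delta p_0^\varepsilon=0$ to express the remaining third derivatives through $y_1$-derivatives, we obtain $p_0^\varepsilon\in\C^3(\bar\Omega^\varepsilon)$ together with \eqref{5.4}.

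The step I expect to be the main obstacle is the Schauder-type control of the Hölder seminorm of the second derivatives uniformly up to $\partial\Omega^\varepsilon$, in particular near the four right-angle corners of the rectangle, where Dirichlet/Neumann and Neumann/Neumann conditions meet: one must ensure that no corner singularity appears (the relevant corner exponents are integers). In the series approach this is absorbed into the sharp coefficient decay $|\bar\varphi_{j,m}(0)|=O(m^{-2-\alpha})$ coming from the infinite-order vanishing of the cut-offs in \textbf{(h3)}; everything else is routine term-by-term estimation governed by Proposition \ref{p2.1}.
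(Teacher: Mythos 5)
Your proposal is correct and follows essentially the same route as the paper: the same explicit separation-of-variables decomposition $p_0^\varepsilon=\PP_0+\PP_1+\PP_2$, the same reliance on Proposition~\ref{p2.1} (via Remark~\ref{r2.1}) for absolute/uniform convergence and the norm bound, the same differentiation at $y_2=\varepsilon$ to read off \eqref{5.3} from \eqref{3.3} and then \eqref{3.4} via the Stefan condition, and the same appeal to the $k=3$ case of Proposition~\ref{p2.1} for the $\C^3$ statement when $\alpha\in(1/2,1)$. The extra detail you supply (dyadic splitting for the H\"older seminorm of the second derivatives, the explicit decay rate $O(m^{-2-\alpha})$, and the energy-identity uniqueness) merely makes explicit what the paper leaves implicit in its appeal to Proposition~\ref{p2.1} and to the coercive estimate.
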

\begin{proof}
Taking into account the smoothness of $\bar{\varphi}_{i}$ and using standard Fourier approach, we construct, at least formally, a solution of \eqref{5.1}
 in  form \eqref{5.2}. The direct calculations provide that $\PP_{0}\in\C^{3+\alpha}(\bar{\Omega}^{\varepsilon})$.
Next, it is apparent that the functions  $\bar{\varphi}_{i}$ meet requirements of Proposition \ref{p2.1}, and hence
 for each fixed $\varepsilon>0,$ the serieses $\PP_{1}$ and $\PP_{2}$ are convergent absolutely and uniformly
  in $\C^{2+\alpha}(\bar{\Omega}^{\varepsilon})$ if $\alpha\in(0,1)$ and in $\C^{3}(\bar{\Omega}^{\varepsilon})$ if $\alpha\in(1/2,1)$.
  Thus, we arrive at the estimates
\begin{align*}
\sum_{j=0}^{2}\|\PP_{j}\|_{\C^{2+\alpha}(\bar{\Omega}^{\varepsilon})}&\leq
C[\|\bar{\varphi}_{1}(\cdot,0)\|_{\C^{2+\alpha}(\Gamma^{\varepsilon}_{1})}
+\|\bar{\varphi}_{2}(\cdot,0)\|_{\C^{2+\alpha}(\Gamma_{2})}
+\|\bar{\varphi}_{3}(\cdot,0)\|_{\C^{2+\alpha}(\Gamma^{\varepsilon}_{3})}]
\\
&\leq
C[\|\chi_{2}\varphi_{1}\|_{\CC^{2+\alpha}(\Gamma^{\varepsilon}_{1,T})} +
\|\chi_{1}\varphi_{2}\|_{\CC^{2+\alpha}(\Gamma_{2,T})}
+\|\chi_{2}\varphi_{3}\|_{\CC^{2+\alpha}(\Gamma^{\varepsilon}_{3,T})}]
\quad\text{if}\quad \alpha\in(0,1),\\
\sum_{j=0}^{2}\|\PP_{j}\|_{\C^{3}(\bar{\Omega}_{\varepsilon})}&\leq
 C[\|\chi_{2}\varphi_{1}\|_{\CC^{2+\alpha}(\Gamma^{\varepsilon}_{1,T})} +
\|\chi_{1}\varphi_{2}\|_{\CC^{2+\alpha}(\Gamma_{2,T})}
+\|\chi_{2}\varphi_{3}\|_{\CC^{2+\alpha}(\Gamma^{\varepsilon}_{3,T})}]\quad\text{if}\quad
\alpha\in(1/2,1)
\end{align*}
with the constant $C$ is independent of $\varepsilon$ for $\varepsilon\in(0,1)$.

Finally, the straightforward calculations together with the obtained regularity of $\PP_{j}$, provides that the function $\PP_{0}+\PP_{1}+\PP_{2}$
 satisfies the equation and the boundary conditions in \eqref{5.1}. Thus, representation \eqref{5.2} and estimates of $\PP_{j}$ provide coercive estimates for
 $p^{\varepsilon}_{0}$, in particular  \eqref{5.4}. Uniqueness of the constructed solution follows immediately from the coercive estimate for $p^{\varepsilon}_{0}$.

At last, to finish the proof of Lemma \ref{l5.1}, we are left to verify \eqref{5.3}. The direct calculations and representation \eqref{5.2} arrive at
\begin{align*}
\frac{\partial p^{\varepsilon}_{0}}{\partial y_{2}}&=\sum_{j=0}^{2}\frac{\partial\PP_{j}}{\partial y_{2}}=\varepsilon^{-1/2}\bar{\varphi}_{2,0}(0) +
\sum_{m=1}^{\infty}\bar{\varphi}_{2,m}(0)\frac{\cosh((y_{2}-\varepsilon)\sqrt{\lambda_{m}})}{\cosh(\varepsilon\sqrt{\lambda_{m}})}\psi_{\lambda_{m}}(y_{1})\\
&
-
\sum_{m=0}^{\infty}\frac{[\bar{\varphi}_{3,m}(0)\cosh(y_{1}\sqrt{\mu_{m}})-\bar{\varphi}_{1,m}(0)\cosh((y_{1}-l)\sqrt{\mu_{m}})]}{\sinh(l\sqrt{\mu_{m}})}\psi_{\mu_{m}}(y_{2}).
\end{align*}
Then, substituting $y_{2}=\varepsilon$ to this representation and taking into account \eqref{3.3}, we end up with estimate \eqref{5.3}.

Besides, the second boundary condition in \eqref{1.2} on
$\Gamma^{\varepsilon}(t)$ together with \eqref{5.3} provide for
$t=0$
\[
V_{n}\Big|_{\Gamma^{\varepsilon}}=-\gamma^{-1}\frac{\partial
p^{\varepsilon}_{0}}{\partial n}>0
\]
if \eqref{3.3} holds. This completes the proof of this statement.
\end{proof}

At this point, we show that the constructed solution in Lemma \ref{l5.1} is  more regular. To this end, it is enough to apply Theorem 3.1 \cite{Vo} to \eqref{5.1}.
\begin{lemma}\label{l5.2}
Let $\alpha\in(0,1)$ and assumptions \textbf{(h1)-(h3)} hold. Then the
classical solution $p^{\varepsilon}_{0}$ belongs to
$\C^{3+\alpha}(\bar{\Omega}^{\varepsilon})$ and
$$
\|p^{\varepsilon}_{0}\|_{\C^{3+\alpha}(\bar{\Omega}^{\varepsilon})}\leq
C[ \|\chi_{2}\varphi_{1}\|_{\CC^{2+\alpha}(\Gamma^{\varepsilon}_{1,T})}+
\|\chi_{1}\varphi_{2}\|_{\CC^{2+\alpha}(\Gamma_{2,T})}+\|\chi_{2}\varphi_{3}\|_{\CC^{2+\alpha}(\Gamma^{\varepsilon}_{3,T})}],
$$
where the constant $C$ is independent of $\varepsilon$ if $\varepsilon\in(0,1)$.

Besides, the function
$
s(y_{1},t)=-\frac{t}{\gamma}\frac{\partial
p^{\varepsilon}_{0}}{\partial y_{2}}\Big|_{\Gamma^{\varepsilon}}
$
satisfies relations
\[
s(y_{1},0)=0,\quad \frac{\partial s}{\partial t}(y_{1},0)=V_{\mathbf{n}}\Big|_{t=0}\quad
\text{on}\quad \Gamma^{\varepsilon},
\]
\[
\|s\|_{\CC^{2+\alpha}(\Gamma^{\varepsilon}_{
T})}+\|\partial s/\partial t\|_{\CC^{2+\alpha}(\Gamma^{\varepsilon}_{ T})}\leq C
[\|\chi_{2}\varphi_{1}\|_{\CC^{2+\alpha}(\Gamma^{\varepsilon}_{1,T})}+
\|\chi_{1}\varphi_{2}\|_{\CC^{2+\alpha}(\Gamma_{2,T})}+
\|\varphi_{3}\chi_{2}\|_{\CC^{2+\alpha}(\Gamma^{\varepsilon}_{3,T})}].
\]
\end{lemma}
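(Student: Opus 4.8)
The plan is to upgrade the regularity of $p_0^\varepsilon$ from $\C^{2+\alpha}$ (resp.\ $\C^3$) given by Lemma~\ref{l5.1} to full $\C^{3+\alpha}$ by invoking an elliptic regularity result adapted to mixed boundary conditions in a domain with corners. Since $\Omega^\varepsilon$ is a rectangle and the boundary conditions in \eqref{5.1} are: homogeneous Dirichlet on the top side $\Gamma^\varepsilon$, and Neumann on the three other sides $\Gamma^\varepsilon_1,\Gamma_2,\Gamma^\varepsilon_3$ with data $\bar\varphi_1,\bar\varphi_2,\bar\varphi_3$, the key point is to check the compatibility conditions at the four corners. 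Crucially, assumption \textbf{(h3)} forces $\bar\varphi_i\equiv 0$ near the endpoints of each side (the cut-off functions $\chi_1,\chi_2$ vanish outside $(\tfrac{l}{5},\tfrac{4l}{5})$ and $(\tfrac15,\tfrac45)$ respectively), so at the two bottom corners $(0,0),(l,0)$ the Neumann--Neumann data vanish to infinite order, and at the two top corners $(0,\varepsilon),(l,\varepsilon)$ the Dirichlet datum is $0$ and the Neumann datum $\bar\varphi_1,\bar\varphi_3$ vanishes near $y_2=\varepsilon$. Hence all corner compatibility conditions are satisfied trivially, and Theorem~3.1 of \cite{Vo} applies to give $p_0^\varepsilon\in\C^{3+\alpha}(\bar\Omega^\varepsilon)$ together with the coercive estimate; the $\varepsilon$-independence of the constant for $\varepsilon\in(0,1)$ follows by tracking the scaling exactly as in the proof of Lemma~\ref{l5.1}, i.e.\ by the same rescaling of the $y_2$-variable that was used to bound $\PP_0,\PP_1,\PP_2$.

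Next I would establish the properties of $s(y_1,t)=-\tfrac{t}{\gamma}\,\partial_{y_2}p_0^\varepsilon\big|_{\Gamma^\varepsilon}$. The two pointwise identities are immediate: at $t=0$ the factor $t$ kills $s$, so $s(y_1,0)=0$; differentiating in $t$ gives $\partial_t s(y_1,0)=-\tfrac1\gamma\,\partial_{y_2}p_0^\varepsilon\big|_{\Gamma^\varepsilon}$, which by the Stefan/Dirichlet conditions in \eqref{1.2} at $t=0$ equals $-\tfrac1\gamma\,\partial_{\mathbf n}p^\varepsilon\big|_{t=0}=V_{\mathbf n}\big|_{t=0}$ on $\Gamma^\varepsilon$ (here one uses that on the flat top side $\mathbf n=(0,1)$ at $t=0$). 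For the norm estimate, observe that $s$ depends on $t$ only through the scalar prefactor $t$, so $\|s\|_{\CC^{2+\alpha}(\Gamma^\varepsilon_T)}\le T\,\big\|\partial_{y_2}p_0^\varepsilon|_{\Gamma^\varepsilon}\big\|_{\C^{2+\alpha}[0,l]}$ and $\|\partial_t s\|_{\CC^{2+\alpha}(\Gamma^\varepsilon_T)}=\big\|\partial_{y_2}p_0^\varepsilon|_{\Gamma^\varepsilon}\big\|_{\C^{2+\alpha}[0,l]}$; restricting the $\C^{3+\alpha}(\bar\Omega^\varepsilon)$ bound for $p_0^\varepsilon$ to the top side and using the trace inequality then yields the stated estimate, with the constant again independent of $\varepsilon\in(0,1)$.

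The main obstacle I anticipate is twofold. First, one must verify carefully that the regularity theory of \cite{Vo} for elliptic problems in nonsmooth (polygonal) domains with mixed Dirichlet/Neumann data indeed yields $\C^{3+\alpha}$ up to and including the corners; this hinges entirely on the vanishing of the data near the corners furnished by \textbf{(h3)}, so the argument must make that reduction explicit (e.g.\ by noting that, near each corner, $p_0^\varepsilon$ solves a homogeneous problem whose solution is smooth by the corner analysis, while away from the corners interior/Schauder estimates apply). Second — and this is the genuinely delicate bookkeeping — one must show the constant $C$ does not blow up as $\varepsilon\to 0$: the natural move is to rescale $\tilde y_2=y_2/\varepsilon$ so that $\Omega^\varepsilon$ becomes the fixed rectangle $(0,l)\times(0,1)$, apply \cite[Thm.~3.1]{Vo} there with an $\varepsilon$-independent constant, and then translate back, keeping track of the $\varepsilon$-powers in the transformed Laplacian $\partial_{y_1}^2+\varepsilon^{-2}\partial_{\tilde y_2}^2$ and in the Neumann data (which, by the form of $\bar\varphi_i$ in \textbf{(h3)}, carry exactly the compensating powers of $\varepsilon$). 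Once this scaling is done cleanly, both the $\C^{3+\alpha}$-bound for $p_0^\varepsilon$ and the bound for $s$ follow with constants uniform in $\varepsilon\in(0,1)$.
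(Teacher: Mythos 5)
Your plan follows the same route as the paper's proof, which is essentially a one-line invocation of Theorem~3.1 of \cite{Vo} applied to problem \eqref{5.1} to upgrade the $\C^{2+\alpha}$ (resp.\ $\C^{3}$) regularity furnished by Lemma~\ref{l5.1} to $\C^{3+\alpha}$, followed by the observation that the stated properties of $s$ are immediate consequences of the regularity of $p_0^\varepsilon$. Your identification of the role of \textbf{(h3)} (the boundary data vanish near each corner, so the Dirichlet--Neumann and Neumann--Neumann compatibility conditions are automatic) is exactly the reason the Volkov theorem applies, and your computation of $\partial_t s(\cdot,0)$ using $\mathbf n_0=(0,1)$ on the flat initial interface and the condition $\frac{\partial p^\varepsilon}{\partial\mathbf n_t}=-\gamma V_{\mathbf n}$ is the correct one.

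One caution on the part you yourself flag as delicate: the one-sided rescaling $\tilde y_2 = y_2/\varepsilon$ turns the Laplacian into the anisotropic operator $\partial_{y_1}^2 + \varepsilon^{-2}\partial_{\tilde y_2}^2$, whose ellipticity ratio blows up as $\varepsilon\to 0$. Theorem~3.1 of \cite{Vo} concerns the genuine Poisson equation on a rectangle, and a Schauder constant for such a degenerating operator is \emph{not} uniform in $\varepsilon$; so the rescaling as you sketch it does not by itself deliver the claimed $\varepsilon$-independence. To get uniform constants you would need either an isotropic rescaling $\tilde y_i = y_i/\varepsilon$ onto the lengthening rectangle $(0,l/\varepsilon)\times(0,1)$ (where the constant can be taken uniform in the length since the Neumann data are supported away from the short sides), or, more in the spirit of the paper, to read the $\C^{3+\alpha}$ estimate directly off the explicit Fourier representation $p_0^\varepsilon=\PP_0+\PP_1+\PP_2$ together with Proposition~\ref{p2.1}, exactly as Lemma~\ref{l5.1} already did for the $\C^{3}$ bound. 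The paper does not spell this out either, but your proposed rescaling, taken literally, has this gap.
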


Note that the statements related to the function $s$ follow
immediately from the properties of the function
$p^{\varepsilon}_{0}$.

%%%%%%%%%%%%%%%%%%%%%%%%%%%%%%%%%%%%%%%%%%%%%%%%%%%%%%%%%%%%%%%%%%%%%

%%%%%%%%%%%%%%%%%%%%%%%%%%%%%%%%%%%%%%%%%%%%%%%%%%%%%%%%%%%%%%%%%%%%%%
\subsection{Reducing problem \eqref{1.1}-\eqref{1.2} to a nonlinear problem in $\Omega^{\varepsilon}\times(0,T)$}
\label{s5.2}

\noindent
Denoting
the cut-off function by $\chi(\Lambda)\in \C_{0}^{\infty}(\R)$ such that $0\leq\chi(\Lambda)\leq 1$ and
\[
\chi(\Lambda)=
\begin{cases}
1,\quad\text{if}\quad |\Lambda|<\varepsilon/15,\\
0,\quad\text{if}\quad |\Lambda|>2\varepsilon/15,
\end{cases}
\]
we introduce the new coordinate
\[
y_{1}=x_{1}\quad \text{and}\quad
y_{2}=x_{2}+\rho(x_{1},t)\chi(\Lambda),
\]
with $\Lambda=x_{2}-\varepsilon$.

It is apparent that (see for details, e.g., Section 3 in \cite{V2} or Section 6 in \cite{BF1}), this transformation reduces the domain
 $\Omega^{\varepsilon}(t)$, $t\in(0,T),$ to the fixed domain $\Omega^{\varepsilon}_{ T}=\Omega^{\varepsilon}\times (0,T)$.

After that, introducing a new unknown function
\[
v=v(x_{1},x_{2},t)=p^{\varepsilon}(x_{1},y_{2}(x_{1},x_{2},t),t),
\]
we rewrite the equation in \eqref{1.2} in the new  function and variables
\begin{equation}\label{5.5}
\Delta v+2\frac{\partial x_{2}}{\partial
y_{1}}\frac{\partial^{2}v}{\partial x_{1}\partial x_{2}}
+\Big[\Big(\frac{\partial x_{2}}{\partial
y_{1}}\Big)^{2}+\Big(\frac{\partial x_{2}}{\partial
y_{2}}\Big)^{2}-1\Big]\frac{\partial^{2}v}{\partial
x_{2}^{2}}+\Big(\frac{\partial^{2} x_{2}}{\partial
y_{1}^{2}}+\frac{\partial^{2} x_{2}}{\partial y_{2}^{2}}\Big)
\frac{\partial v}{\partial x_{2}}=0\quad\text{in}\quad
\Omega^{\varepsilon}_{T},
\end{equation}
where we set
\begin{equation}\label{5.5*}
\begin{cases}
\dfrac{\partial x_{1}}{\partial y_{1}}=1,\quad \dfrac{\partial x_{1}}{\partial y_{2}}=0,\\\\
\dfrac{\partial x_{2}}{\partial y_{1}}=-\dfrac{\chi\frac{\partial \rho}{\partial x_{1}}}{1+\chi'\rho},
\quad \dfrac{\partial x_{2}}{\partial y_{2}}=\dfrac{1}{1+\chi'\rho},\\\\
\dfrac{\partial^{2} x_{2}}{\partial y_{2}^{2}}=-\dfrac{\chi''\rho}{(1+\chi'\rho)^{3}},\\\\
\dfrac{\partial^{2} x_{2}}{\partial y_{1}^{2}}=\dfrac{\chi\chi''\rho\frac{\partial \rho}{\partial x_{1}}}{(1+\chi'\rho)^{3}}+
\dfrac{\chi[\chi'(\frac{\partial \rho}{\partial x_{1}})^{2}-\frac{\partial^{2} \rho}{\partial x^{2}_{1}}(2+\chi'\rho)]}{(1+\chi'\rho)^{2}}.
\end{cases}
\end{equation}

At this point, we begin to rewrite the conditions on the free boundary in the new variables.
Recasting the arguments from Section \ref{s4} leading to representation \eqref{4.2}, we deduce that the Stefan condition of the moving boundary has the form
\[
\gamma\frac{\partial \rho}{\partial t}=\frac{\partial p^{\varepsilon}}{\partial
y_{1}}\frac{\partial \rho}{\partial y_{1}}-\frac{\partial
p^{\varepsilon}}{\partial y_{2}}.
\]

As a result, taking into account \eqref{5.5*}, we can can rewrite
the boundary conditions on $\Gamma^{\varepsilon}(t)$ in the form
\begin{equation}\label{5.6}
\begin{cases}
v(x_{1},x_{2},t)=0\quad\text{on}\quad \Gamma^{\varepsilon}_{T},\\
\gamma\dfrac{\partial \rho}{\partial t}=\dfrac{\partial\rho}{\partial x_{1}}\dfrac{\partial v}{\partial x_{1}}-\Big[1+\Big(\dfrac{\partial\rho}{\partial x_{1}}\Big)^{2}\Big]\dfrac{\partial v}{\partial x_{2}} \quad\text{on}\quad \Gamma^{\varepsilon}_{ T},\\
\rho(x_{1},0)=0\quad\text{in}\quad [0,l].
\end{cases}
\end{equation}
Finally, in virtue of assumptions \textbf{(h2)-(h3)} and the definition of $\chi(\Lambda)$, the rest boundary conditions in \eqref{1.2} remain unchanged:
\begin{equation}\label{5.7}
\begin{cases}
\dfrac{\partial v}{\partial x_{1}}=-\chi_{2}(x_{2}/\varepsilon)\varphi_{1}(x_{2}/\varepsilon,t)\quad\text{on}\quad\Gamma_{1,T}^{\varepsilon},\\
\dfrac{\partial v}{\partial x_{2}}=-\varepsilon\chi_{1}(x_{1})\varphi_{2}(x_{1},t)\quad\text{on}\quad\Gamma_{2,T},\\
\dfrac{\partial v}{\partial
x_{1}}=\chi_{2}(x_{2}/\varepsilon)\varphi_{3}(x_{2}/\varepsilon,t)\quad\text{on}\quad\Gamma_{3,T}^{\varepsilon}.
\end{cases}
\end{equation}

Summing up, we can reformulate Theorem \ref{t3.1} as follows.
\begin{theorem}\label{t5.1}
\textbf{(Reformulated Theorem \ref{t3.1})} Let conditions of
Theorem \ref{t3.1} hold. Then for some small $T$ and each fixed
positive $\varepsilon$, there exists a unique solution
$(v(x_{1},x_{2},t),\rho(x_{1},t))$ of nonlinear problem
\eqref{5.5}-\eqref{5.7} satisfying regularity
$
v\in\CC^{2+\alpha}(\bar{\Omega}^{\varepsilon}_{ T}),\quad
\rho(x_{1},t)\in\hat{\CC}^{2+\alpha}(\Gamma^{\varepsilon}_{ T}).
$
Besides,
\begin{equation}\label{5.8}
v(x_{1},x_{2},0)=p^{\varepsilon}_{0}(x_{1},x_{2})\quad\text{in}\quad\bar{\Omega}^{\varepsilon},
\end{equation}
where $p^{\varepsilon}_{0}$ is given with \eqref{5.1}.
\end{theorem}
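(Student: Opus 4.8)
The strategy is to solve the nonlinear system \eqref{5.5}--\eqref{5.7} by a contraction argument after subtracting off the known initial state. I extend $p^{\varepsilon}_{0}$ to $\Omega^{\varepsilon}_{T}$ as a $t$-independent function, let $s$ be the function from Lemma \ref{l5.2}, and set $v=p^{\varepsilon}_{0}+w$, $\rho=s+\sigma$. Since $p^{\varepsilon}_{0}$ solves \eqref{5.1} and $\gamma\,\partial_{t}s=-\partial_{y_{2}}p^{\varepsilon}_{0}|_{\Gamma^{\varepsilon}}$, the new unknowns $(w,\sigma)$ satisfy a system whose principal part is linear and decoupled: $\Delta w=F[w,\sigma]$ in $\Omega^{\varepsilon}_{T}$, $w=0$ on $\Gamma^{\varepsilon}_{T}$, inhomogeneous Neumann conditions $\partial w/\partial\mathbf{n}=h[\,\cdot\,]$ on $\Gamma^{\varepsilon}_{1,T}\cup\Gamma_{2,T}\cup\Gamma^{\varepsilon}_{3,T}$ whose data are the time-increments $-\chi_{i}\,[\varphi_{i}(\cdot,t)-\varphi_{i}(\cdot,0)]$, together with the Stefan relation $\gamma\,\partial_{t}\sigma+\partial_{x_{2}}w|_{\Gamma^{\varepsilon}_{T}}=G[w,\sigma]$, and $w(\cdot,0)=0$, $\sigma(\cdot,0)=0$; here $F$ collects the variable-coefficient corrections from \eqref{5.5}--\eqref{5.5*} and $G=\rho_{x_{1}}v_{x_{1}}-\rho_{x_{1}}^{2}v_{x_{2}}$ from \eqref{5.6}. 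The point is that each term of $F$, $G$, and $h$ carries a factor among $\rho,\rho_{x_{1}},\rho_{x_{1}x_{1}}$ (that is, $s+\sigma$ and its $x_{1}$-derivatives) or a time-increment of $\varphi_{i}$; since Lemma \ref{l5.2} gives $\|s\|_{\CC^{2+\alpha}(\Gamma^{\varepsilon}_{T})}=O(T)$ and the zero initial traces built into $\CC_{0}^{2+\alpha}(\bar\Omega^{\varepsilon}_{T})$ and $\hat\CC_{0}^{2+\alpha}(\Gamma^{\varepsilon}_{T})$ force smallness over short time intervals, the maps $F,G,h$ are small, with smallness governed by $T$, on bounded subsets of those spaces, so that $(w,\sigma)$ should be found in the subspaces with vanishing initial traces.

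Next I would establish the linear theory: given $f\in\CC_{0}^{\alpha}(\bar\Omega^{\varepsilon}_{T})$, Neumann data $h\in\CC_{0}^{1+\alpha}$, and $g\in\CC_{0}^{1+\alpha}(\Gamma^{\varepsilon}_{T})$, produce the unique $(w,\sigma)\in\CC_{0}^{2+\alpha}(\bar\Omega^{\varepsilon}_{T})\times\hat\CC_{0}^{2+\alpha}(\Gamma^{\varepsilon}_{T})$ solving the linear system above and satisfying a coercive estimate $\|w\|_{\CC^{2+\alpha}(\bar\Omega^{\varepsilon}_{T})}+\|\sigma\|_{\hat\CC^{2+\alpha}(\Gamma^{\varepsilon}_{T})}\le C\big(\|f\|_{\CC^{\alpha}}+\|h\|_{\CC^{1+\alpha}}+\|g\|_{\CC^{1+\alpha}}\big)$ with $C$ independent of $T\le1$. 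For each fixed $t$ the problem for $w(\cdot,t)$ is the mixed Dirichlet--Neumann problem for the Laplacian in the curvilinear rectangle $\Omega^{\varepsilon}$; at the two upper corners a Dirichlet face meets a Neumann face at a right angle, but assumption \textbf{(h3)} makes the Neumann data vanish near those corners, so the natural compatibility conditions hold and the Schauder estimates extend up to the corners — precisely the mechanism (Theorem 3.1 of \cite{Vo} plus a reflection argument) already invoked for $p^{\varepsilon}_{0}$ in Lemmas \ref{l5.1}--\ref{l5.2}. Treating $t$ as a parameter yields $w\in\CC^{2+\alpha}$, and differentiating the equation in $t$ yields the bound for $\partial_{t}w$; then the Stefan relation is solved for $\sigma$, which inherits the regularity of the class $\hat\CC^{2+\alpha}$ because $g$ and $\partial_{x_{2}}w|_{\Gamma^{\varepsilon}}$ are traces of, respectively, the data and the elliptic solution, and integrating in $t$ from the zero initial value places $(w,\sigma)$ in the subspaces with vanishing initial traces (the extra room $p^{\varepsilon}_{0}\in\C^{3+\alpha}$ from Lemma \ref{l5.2} guaranteeing compatibility at $t=0$).

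Finally, with the bounded linear solution operator $\mathcal{L}:(f,h,g)\mapsto(w,\sigma)$ in hand, I would set $\mathcal{T}(w,\sigma):=\mathcal{L}\big(F[w,\sigma],h[w,\sigma],G[w,\sigma]\big)$ on the closed ball $B_{R}=\{\|w\|_{\CC^{2+\alpha}}+\|\sigma\|_{\hat\CC^{2+\alpha}}\le R\}$ in $\CC_{0}^{2+\alpha}(\bar\Omega^{\varepsilon}_{T})\times\hat\CC_{0}^{2+\alpha}(\Gamma^{\varepsilon}_{T})$, first fixing $R$ to absorb $\|\mathcal{L}\|$ and the data, then shrinking $T$ so that the smallness and Lipschitz estimates for $F,h,G$ make $\mathcal{T}$ map $B_{R}$ into itself with $\|\mathcal{T}(w_{1},\sigma_{1})-\mathcal{T}(w_{2},\sigma_{2})\|\le\frac{1}{2}\|(w_{1},\sigma_{1})-(w_{2},\sigma_{2})\|$. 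Banach's fixed point theorem then gives a unique $(w,\sigma)$, hence a unique $(v,\rho)$ with the stated regularity; \eqref{5.8} holds because $w(\cdot,0)=0$, and undoing the Hanzawa change of variables recovers the unique classical solution of \eqref{1.1}--\eqref{1.2}, i.e. Theorem \ref{t3.1}. The main obstacle is making $\mathcal{T}$ genuinely contractive: one must show that the Lipschitz constant of $(w,\sigma)\mapsto(F[w,\sigma],h[w,\sigma],G[w,\sigma])$ into $\CC_{0}^{\alpha}\times\CC_{0}^{1+\alpha}\times\CC_{0}^{1+\alpha}$ tends to $0$ as $T\to0$ \emph{uniformly} on $B_{R}$ — in particular for the highest-order correction term, the coefficient $\sim\rho_{x_{1}x_{1}}$ multiplying $\partial_{x_{2}}v$ in \eqref{5.5}, and for the term $\rho_{x_{1}}v_{x_{1}}$ in \eqref{5.6} — which forces a careful use of the exact pattern of vanishing of $\rho$ and its derivatives at $t=0$ encoded in the spaces $\CC_{0}$ and $\hat\CC_{0}$.
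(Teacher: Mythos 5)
The central issue with your plan is the choice of decomposition, and it is not merely a difference of taste: your additive splitting $v=p^{\varepsilon}_{0}+w$, $\rho=s+\sigma$ produces a linear system in which $w=0$ on $\Gamma^{\varepsilon}_{T}$ and $\gamma\,\partial_{t}\sigma=-\partial_{x_{2}}w|_{\Gamma^{\varepsilon}}+\cdots$, so that the linear theory for $\sigma$ is obtained purely by integrating the Neumann trace of $w$ in time. But if $w(\cdot,t)\in\C^{2+\alpha}(\bar\Omega^{\varepsilon})$, then $\partial_{x_{2}}w|_{\Gamma^{\varepsilon}}$ is only $\C^{1+\alpha}$ in $x_{1}$, and time integration does not improve spatial regularity; hence the $\sigma$ produced by your linear solution operator is $\C^{1+\alpha}$ in $x_{1}$, not $\C^{2+\alpha}$. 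This is fatal, because the coefficients in the transformed equation \eqref{5.5}--\eqref{5.5*} contain $\rho_{x_{1}x_{1}}=\sigma_{x_{1}x_{1}}+s_{x_{1}x_{1}}$, so the nonlinear map $F$ sends your candidate $\sigma$ outside $\CC^{\alpha}(\bar\Omega^{\varepsilon}_{T})$ and the iteration never closes in $\CC_{0}^{2+\alpha}\times\hat\CC_{0}^{2+\alpha}$, no matter how small $T$ is. Shrinking $T$ controls size but not regularity; the loss of one spatial derivative persists at every step of the iteration. The difficulty you flag at the end (the $\rho_{x_{1}x_{1}}$ coefficient) is real, but it is not a matter of making a Lipschitz constant small — it is a structural loss of derivatives in the linear operator itself.

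The paper circumvents this with a more careful decomposition: in \eqref{5.9} one sets $u=v-p^{\varepsilon}_{0}-\chi(\Lambda)\,\partial_{x_{2}}p^{\varepsilon}_{0}\cdot\sigma$, which on $\Gamma^{\varepsilon}_{T}$ gives not $u=0$ but the \emph{coupling} $u=A(x_{1})\sigma$ with $A(x_{1})=-\partial_{x_{2}}p^{\varepsilon}_{0}|_{\Gamma^{\varepsilon}}$. The positivity $A\ge\delta>0$ is exactly where the well-posedness condition \textbf{(h4)}/\eqref{5.3} enters — something your argument never uses. With this coupling, $\sigma=u/A$ is a Dirichlet trace of a bulk $\C^{2+\alpha}$ function, hence automatically $\C^{2+\alpha}$ in $x_{1}$, recovering the missing derivative. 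The price is that the Stefan relation becomes a dynamic (oblique/Wentzell-type) boundary condition for $u$ alone, $\partial_{t}u+\tfrac{A}{\gamma}\,\partial_{x_{2}}u=\tfrac{A}{\gamma}\,\bar f_{1}$ on $\Gamma^{\varepsilon}_{T}$ (see \eqref{5.15}), and one must prove the a priori $\CC^{2+\alpha}$ estimate for \emph{that} evolution problem. This is done through the model problem in the right angle with the dynamic boundary condition, Lemma \ref{l5.3}, whose Poisson-type kernel $K(x,t)=2\mathfrak{C}_{0}t/((\mathfrak{C}_{0}t)^{2}+x^{2})$ (Lemma~A.1) supplies the one-derivative gain $f\in\CC^{1+\alpha}\Rightarrow U\in\CC^{2+\alpha}$, $\partial_t U\in\CC^{1+\alpha}$. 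Without this ingredient — and without the coupled boundary condition that makes it usable — Schauder theory applied "at each fixed $t$" gives a linear solution operator that is simply too weak, and your contraction argument has nothing to iterate on. To repair your proof you would need to replace the decomposition by \eqref{5.9}, introduce the dynamic boundary-value problem \eqref{5.14}--\eqref{5.15}, and establish the right-angle model estimate of Lemma \ref{l5.3} before the Banach fixed-point step.
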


Thus the proof of Theorem \ref{t3.1} is equivalent to the one of Theorem \ref{t5.1}.
The rest part of this section is devoted to the proof of Theorem \ref{t5.1}. It is worth mentioning that equality \eqref{5.8} follows immediately from \eqref{5.5}-\eqref{5.6}.

%%%%%%%%%%%%%%%%%%%%%%%%%%%%%%%%%%%%%%%%%%%%%%%%%%%%%%%%%%%%%%%%%%%%%

%%%%%%%%%%%%%%%%%%%%%%%%%%%%%%%%%%%%%%%%%%%%%%%%%%%%%%%%%%%%%%%%%%%%%%
\subsection{A perturbation form of system \eqref{5.5}-\eqref{5.7}}
\label{s5.3}

\noindent
In this subsection, we linearize system \eqref{5.5}-\eqref{5.7} on the initial data and rewrite the one in the form
\[
\A\mathbf{z}=\mathcal{F}\mathbf{z},
\]
where $\A$ is a linear operator, while $\mathcal{F}$ is a nonlinear perturbation. To this end, we introduce new unknown functions
\begin{equation}\label{5.9}
\begin{cases}
\sigma=\sigma(x_{1},t)=\rho(x_{1},t)-s(x_{1},t),\\
u=u(x_{1},x_{2},t)=v(x_{1},x_{2},t)-p^{\varepsilon}_{0}(x_{1},x_{2})-\chi(\Lambda)\dfrac{\partial
p^{\varepsilon}_{0}}{\partial x_{2}}(x_{1},x_{2})\sigma(x_{1},t),
\end{cases}
\end{equation}
where $p^{\varepsilon}_{0}$ solves problem \eqref{5.1} and $s$ is
defined in Lemma \ref{l5.2}.

Then, substituting \eqref{5.9} to \eqref{5.5}-\eqref{5.7}, we end up with
\begin{equation}\label{5.10}
\begin{cases}
\Delta u=\mathcal{F}_{0}(u,\sigma)\quad\text{in}\quad \Omega^{\varepsilon}_{ T},\\
u=-\dfrac{\partial p^{\varepsilon}_{0}}{\partial x_{2}}\sigma\quad\text{on}\quad \Gamma^{\varepsilon}_{ T},\\
\gamma\dfrac{\partial\sigma}{\partial t}+\dfrac{\partial u}{\partial x_{2}}=\mathcal{F}_{1}(u,\sigma)\quad\text{on}\quad \Gamma^{\varepsilon}_{T},\\
\dfrac{\partial u}{\partial x_{1}}=0\quad\text{on}\quad \Gamma^{\varepsilon}_{1,T}\cup\Gamma^{\varepsilon}_{3,T},\\
\dfrac{\partial u}{\partial x_{2}}=0\quad\text{on}\quad \Gamma_{2,T},\\
\sigma(x_{1},0)=0\quad\text{in}\quad [0,l],\\
u(x_{1},x_{2},0)=0\quad\text{in}\quad \bar{\Omega}^{\varepsilon},
\end{cases}
\end{equation}
where we put
\begin{align*}
-\mathcal{F}_{0}(u,\sigma)&=2\frac{\partial
x_{2}}{\partial y_{1}}
\frac{\partial^{2}}{\partial x_{1}\partial x_{2}}\Big(u+p^{\varepsilon}_{0}+\chi \sigma\frac{\partial p^{\varepsilon}_{0}}{\partial x_{2}}\Big) +\Big[\Big(\frac{\partial x_{2}}{\partial
y_{1}}\Big)^{2}+\Big(\frac{\partial x_{2}}{\partial
y_{2}}\Big)^{2}-1\Big]
\frac{\partial^{2}}{\partial x_{2}^{2}}\Big(u+p^{\varepsilon}_{0}+\chi\sigma\frac{\partial p^{\varepsilon}_{0}}{\partial x_{2}}\Big)\\\\
& +\Big(\frac{\partial^{2}x_{2} }{\partial
y_{1}^{2}}+\frac{\partial^{2}x_{2} }{\partial y_{2}^{2}}\Big)
\frac{\partial}{\partial x_{2}}
\Big(u+p^{\varepsilon}_{0}+\chi\sigma\frac{\partial
p^{\varepsilon}_{0}}{\partial x_{2}}\Big)+\Delta\Big(\chi\sigma\frac{\partial
p^{\varepsilon}_{0}}{\partial x_{2}}\Big)
\end{align*}
with $\frac{\partial x_{i}}{\partial y_{j}}$, $\frac{\partial^{2} x_{i}}{\partial y_{j}^{2}}$ are given by \eqref{5.5*} and depended on $\sigma$ through relation \eqref{5.9},
\begin{equation*}
\mathcal{F}_{1}(u,\sigma)=-\Big[1+\Big(\frac{\partial\sigma}{\partial x_{1}}+\frac{\partial s}{\partial x_{1}}\Big)^{2}\Big]\frac{\partial^{2} p^{\varepsilon}_{0}}{\partial x_{2}^{2}}\sigma
 -\Big(\frac{\partial \sigma}{\partial x_{1}}+\frac{\partial s}{\partial x_{1}}\Big)^{2}\Big(\frac{\partial u}{\partial
x_{2}}+\frac{\partial p^{\varepsilon}_{0}}{\partial x_{2}}\Big).
\end{equation*}
Thus, system \eqref{5.5}-\eqref{5.7} is written in the short convenient form
\[
\A\mathbf{z}=\mathcal{F}\mathbf{z},\quad \mathbf{z}=(u,\sigma).
\]
Based on Lemma \ref{l5.2}, boundary conditions in \eqref{5.10} and representations of $\mathcal{F}_{0}$ and $\mathcal{F}_{1}$, we assert the following result.
\begin{corollary}\label{c5.1}
The functions $\mathcal{F}_{0}(u,\sigma)$ and $\mathcal{F}_{1}(u,\sigma)$ contain the higher derivatives of $u$ and $\sigma$ with coefficients that tends to zero as $t\to 0$; the ''quadratic'' terms with respect to $u$ and $\sigma$, and their derivatives; and the terms of minor differential orders of unknown functions. Besides,
\begin{align*}
\mathcal{F}_{0}(u,\sigma)\Big|_{t=0}&=\mathcal{F}_{1}(u,\sigma)\Big|_{t=0}=0,\\
\mathcal{F}_{0}(u,\sigma)&=0\quad\text{at}\quad(x_{1},x_{2})\in\{(0,0);(0,\varepsilon);(l,0);(l,\varepsilon)\},\quad t\in[0,T],\\
\mathcal{F}_{1}(u,\sigma)&=0\quad\text{at}\quad(x_{1},x_{2})\in\{(0,\varepsilon);(l,\varepsilon)\},\quad t\in[0,T],
\end{align*}
\begin{align*}
\mathcal{F}_{0}(0,0)&=\frac{2\chi \frac{\partial s}{\partial x_{1}}}{1+\chi'
s}\frac{\partial^{2}p^{\varepsilon}_{0}}{\partial x_{1}\partial
x_{2}}-
\Big[\frac{\chi^{2} ( \frac{\partial s}{\partial x_{1}})^{2}+1}{(1+\chi' s)^{2}}-1\Big]\frac{\partial^{2}p^{\varepsilon}_{0}}{\partial x_{2}^{2}}\\
& -\Big[\frac{\chi\chi''s  \frac{\partial s}{\partial x_{1}}-s\chi''}{(1+\chi' s)^{3}}+
\frac{\chi[\chi'( \frac{\partial s}{\partial x_{1}})^{2}- \frac{\partial^{2} s}{\partial x^{2}_{1}}(2+\chi's)]}{(1+\chi'
s)^{2}}
\Big]\frac{\partial p^{\varepsilon}_{0}}{\partial x_{2}},\\
\mathcal{F}_{1}(0,0)&=-\Big( \frac{\partial s}{\partial x_{1}}\Big)^{2}\frac{\partial
p^{\varepsilon}_{0}}{\partial x_{2}}.
\end{align*}
\end{corollary}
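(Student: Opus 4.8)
The statement is a catalogue of structural properties of the explicitly defined right-hand sides $\mathcal{F}_0,\mathcal{F}_1$, so I would prove it by direct inspection, organised in four steps; there is no single deep point. \textbf{Step 1 (structural form).} Substitute the expressions \eqref{5.5*} for $\partial x_i/\partial y_j$ and $\partial^2 x_i/\partial y_j^2$ — each a rational function of $\rho=\sigma+s$ and its first two $x_1$-derivatives that vanishes identically when $\rho\equiv0$ — into the definitions of $\mathcal{F}_0$ and $\mathcal{F}_1$, and expand the denominators via $1/(1+\chi'\rho)=1+O(\rho)$. Sorting the resulting summands exhibits the three advertised families: (a) a factor $O(\rho)$, which vanishes at $t=0$ (since $\rho(\cdot,0)=\sigma(\cdot,0)+s(\cdot,0)=0$ by \eqref{5.10} and Lemma~\ref{l5.2}) and hence tends to zero as $t\to0$, multiplying a second derivative of $u$; (b) genuinely quadratic expressions, i.e. products of two factors each carrying a derivative of $u$, of $\sigma$, or of $\rho$; (c) terms of lower differential order in the unknowns. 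The summand $\Delta\big(\chi\sigma\,\partial p^\varepsilon_0/\partial x_2\big)$ is handled separately: using that $\chi=\chi(\Lambda)$ depends on $x_2$ only while $\sigma$ depends on $x_1$ only, together with $\Delta p^\varepsilon_0=0$, the product rule reduces it to a second-order term in $\sigma$ plus lower-order terms. This yields the first assertion.

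\textbf{Step 2 (vanishing at $t=0$).} On the subspace where the iteration runs one has $\sigma(\cdot,0)=0$ and, by Lemma~\ref{l5.2}, $s(\cdot,0)=0$, so $\rho(\cdot,0)=0$ and $u(\cdot,0)=0$. Then \eqref{5.5*} shows that $\partial x_2/\partial y_1$, the bracket $(\partial x_2/\partial y_1)^2+(\partial x_2/\partial y_2)^2-1$ and $\partial^2 x_2/\partial y_j^2$ all vanish at $t=0$, while $\Delta\big(\chi\sigma\,\partial p^\varepsilon_0/\partial x_2\big)\big|_{t=0}=0$; scanning $\mathcal{F}_1$, every summand carries a factor $\rho$, $\partial_{x_1}\rho$ or $\sigma$, hence vanishes as well. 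Therefore $\mathcal{F}_0(u,\sigma)\big|_{t=0}=\mathcal{F}_1(u,\sigma)\big|_{t=0}=0$. \textbf{Step 3 (vanishing at the corner points).} At $x_2=0$ one has $\Lambda=x_2-\varepsilon=-\varepsilon$, so $|\Lambda|>2\varepsilon/15$ and $\chi(\Lambda)=\chi'(\Lambda)=\chi''(\Lambda)=0$; this annihilates $\partial x_2/\partial y_1$, the bracket, $\partial^2 x_2/\partial y_j^2$ and every term of $\Delta\big(\chi\sigma\,\partial p^\varepsilon_0/\partial x_2\big)$, giving $\mathcal{F}_0=0$ at $(0,0)$ and $(l,0)$. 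At $x_2=\varepsilon$ one has $\Lambda=0$, so $\chi=1$, $\chi'=\chi''=0$; here I would invoke assumption \textbf{(h3)}: since $\chi_1,\chi_2$ are supported away from the endpoints, $\bar{\varphi}_1,\bar{\varphi}_3$ vanish near $y_2=\varepsilon$, so $p^\varepsilon_0$ satisfies homogeneous Neumann conditions on $\Gamma^\varepsilon_1,\Gamma^\varepsilon_3$ near $(0,\varepsilon),(l,\varepsilon)$ and a homogeneous Dirichlet condition on $\Gamma^\varepsilon$. Reflecting $p^\varepsilon_0$ evenly across $y_1=0$ (resp.\ $y_1=l$) makes it even in $y_1$ there, so $\partial_{y_1}p^\varepsilon_0$, $\partial^2_{y_1y_2}p^\varepsilon_0$ and $\partial^3_{y_1y_2^2}p^\varepsilon_0$ vanish at those corners, while $\partial^2_{y_2}p^\varepsilon_0=0$ on $\Gamma^\varepsilon$ follows from $\Delta p^\varepsilon_0=0$ and $p^\varepsilon_0\big|_{\Gamma^\varepsilon}=0$. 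Combining these facts with the boundary conditions $\partial_{x_1}u=0$ on $\Gamma^\varepsilon_{1,T}\cup\Gamma^\varepsilon_{3,T}$ and $u=-\sigma\,\partial p^\varepsilon_0/\partial x_2$ on $\Gamma^\varepsilon_T$ from \eqref{5.10}, and with the corner compatibility constraints imposed on $\sigma$ in the iteration space (reflecting that $\Phi^\varepsilon$ carries homogeneous Neumann data near $y_1\in\{0,l\}$), one checks term by term that $\mathcal{F}_0$ and $\mathcal{F}_1$ vanish at $(0,\varepsilon)$ and $(l,\varepsilon)$.

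\textbf{Step 4 (the values $\mathcal{F}_0(0,0)$, $\mathcal{F}_1(0,0)$).} Putting $u=0$, $\sigma=0$ gives $\rho=s$, $\Delta\big(\chi\sigma\,\partial p^\varepsilon_0/\partial x_2\big)=0$, and the derivatives of $u+p^\varepsilon_0+\chi\sigma\,\partial p^\varepsilon_0/\partial x_2$ collapse to those of $p^\varepsilon_0$; substituting $\partial x_2/\partial y_1=-\chi\,\partial_{x_1}s/(1+\chi's)$, $\partial x_2/\partial y_2=1/(1+\chi's)$ and the matching $\partial^2 x_2/\partial y_j^2$ from \eqref{5.5*} into the expanded formulas reproduces verbatim the displayed expressions for $\mathcal{F}_0(0,0)$ and $\mathcal{F}_1(0,0)$ — pure bookkeeping.

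The expected main obstacle is not conceptual but clerical: the term-by-term sorting of Step 1 inside the doubly nonlinear $\mathcal{F}_0$, where the change-of-variables coefficients multiply second derivatives of $u+p^\varepsilon_0+\chi\sigma\,\partial p^\varepsilon_0/\partial x_2$, is the place where a sign or coefficient slip is easiest to make. The one genuinely delicate point is the corner vanishing in Step 3, which rests on the flatness of $p^\varepsilon_0$ at the corners — a consequence of the supports of the cut-offs prescribed in \textbf{(h3)} — together with the corner compatibility conditions encoded in the function spaces used for the contraction mapping argument.
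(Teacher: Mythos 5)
Your proposal follows the same route as the paper: the paper offers no detailed argument for Corollary~\ref{c5.1}, merely asserting that it is ``based on Lemma~\ref{l5.2}, boundary conditions in \eqref{5.10} and representations of $\mathcal{F}_0$ and $\mathcal{F}_1$,'' and your four-step direct inspection is precisely that computation. Steps~2, 3 and~4 are correct as written; in particular your diagnosis in Step~3 --- that the corner vanishing requires $\sigma_{x_1}(0,t)=\sigma_{x_1}(l,t)=0$, obtained from $u_{x_1}=0$ on $\Gamma^\varepsilon_{1,T}\cup\Gamma^\varepsilon_{3,T}$, from $u=-\frac{\partial p^\varepsilon_0}{\partial x_2}\sigma$ on $\Gamma^\varepsilon_T$, and from the flatness of $p^\varepsilon_0$ at the corners that \textbf{(h3)} enforces --- is indeed the delicate point, and you identify it correctly.

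One detail in Step~1, however, is worth flagging. After invoking $\Delta p^\varepsilon_0=0$, the summand $\Delta\big(\chi\sigma\,\frac{\partial p^\varepsilon_0}{\partial x_2}\big)$ still leaves behind $\chi\,\sigma_{x_1x_1}\,\frac{\partial p^\varepsilon_0}{\partial x_2}$: this is a top-order derivative of $\sigma$, its coefficient $\chi\,\frac{\partial p^\varepsilon_0}{\partial x_2}$ is time-independent and so does not ``tend to zero as $t\to 0$,'' and the term is linear rather than quadratic. It therefore falls into none of the three families you advertise, and the closing ``This yields the first assertion'' overstates what Step~1 has established. To be fair, this imprecision is already present in the corollary's informal first sentence; what actually matters downstream (in Lemma~\ref{l5.4}) is a small Lipschitz constant on the ball $B_d$, and for this particular linear-in-$\sigma_{x_1x_1}$ summand smallness must be extracted from the vanishing-at-$t=0$ compatibility $\sigma_{x_1x_1}(\cdot,0)=0$ built into $\hat{\CC}^{2+\alpha}_0(\Gamma^\varepsilon_T)$, not from a shrinking coefficient. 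Stating that explicitly, rather than folding the term into your categories (a)--(c), would make Step~1 complete.
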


In the next step, we show the boundedness of the linear operator $\A$ in the corresponding functional spaces. To this end, freezing  the functional arguments in the functions $\mathcal{F}_{0}(u,\sigma)$ and $\mathcal{F}_{1}(u,\sigma)$, we obtain from \eqref{5.10} the linear system with variable coefficients, which will be analyzed in detail in Subsection \ref{s5.5}. It is worth mentioning that the model problem with a dynamic boundary condition plays  a key point in the
investigation of this linear system.

%%%%%%%%%%%%%%%%%%%%%%%%%%%%%%%%%%%%%%%%%%%%%%%%%%%%%%%%%%%%%%%%%%%%%

%%%%%%%%%%%%%%%%%%%%%%%%%%%%%%%%%%%%%%%%%%%%%%%%%%%%%%%%%%%%%%%%%%%%%%
\subsection{Model problem in the right angle}
\label{s5.4}

\noindent
In order to construct the model problem near the boundary $\Gamma_{\varepsilon T}$ by using the Schauder approach, it is necessary to fix the coefficients of the original problem at the boundary point. In this section, we study the boundary-value problem with a dynamic boundary condition in the right angle. Namely, let $\mathfrak{C}_{0}$ be some positive number and
\begin{align*}
\mathfrak{R}&=\{(x_{1},x_{2})\in\R^{2}:\quad x_{1}>0,\quad x_{2}>0\},\quad
\mathfrak{R}_{T}=\mathfrak{R}\times(0,T),\\
\mathfrak{R}_{1}&=\{(x_{1},x_{2})\in\R^{2}:\quad x_{1}=0,\quad x_{2}\geq 0\},\quad
\mathfrak{R}_{1,T}=\mathfrak{R}_{1}\times[0,T],\\
\mathfrak{R}_{2}&=\{(x_{1},x_{2})\in\R^{2}:\quad x_{2}=0,\quad x_{1}\geq 0\},\quad
\mathfrak{R}_{2,T}=\mathfrak{R}_{2}\times[0,T],\\
\end{align*}
We consider the initial-boundary problem in the unknown function $U=U(x,t):\mathfrak{R}_{T}\to\R$
\begin{equation}\label{5.11}
\begin{cases}
\Delta U=0\quad\text{in}\quad \mathfrak{R}_{T},\\
\dfrac{\partial U}{\partial x_{1}}=0\quad\text{on}\quad \mathfrak{R}_{1,T},\\
\dfrac{\partial U}{\partial t}-\mathfrak{C}_{0}\dfrac{\partial U}{\partial x_{2}}=f(x_{1},t)\quad\text{on}\quad \mathfrak{R}_{2,T},\\
U=0\quad\text{if}\quad |x|\to\infty,\quad t\in[0,T],\\
U(x,0)=0\quad\text{in}\quad \bar{\mathfrak{R}},
\end{cases}
\end{equation}
where $f$ is a given function satisfying conditions

\noindent
\textbf{(h6):} for some positive number $r$, $f\equiv 0$ if either $t\leq 0$ or $|x|>r$,  and
$
f\in\CC^{1+\alpha}(\bar{\mathfrak{R}}_{2,T}).
$

\begin{lemma}\label{l5.3}
Under assumption \textbf{(h6)} problem \eqref{5.1} admits a unique classical solution $U$ in $\bar{\mathfrak{R}}_{T}$ satisfying regularity
$U\in\CC^{2+\alpha}(\bar{\mathfrak{R}}_{T})\quad\text{and}\quad
\frac{\partial U}{\partial
t}\in\CC^{1+\alpha}(\mathfrak{R}_{2,T}).
$
Besides, the estimate holds
$$\|U\|_{\CC^{2+\alpha}(\bar{\mathfrak{R}}_{T})}+\|\partial U/\partial t\|_{\CC^{1+\alpha}(\mathfrak{R}_{2,T})}\leq C\|f\|_{\CC^{1+\alpha}(\mathfrak{R}_{2,T})}.$$
\end{lemma}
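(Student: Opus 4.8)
\textbf{Proof plan for Lemma \ref{l5.3}.}
The plan is to solve problem \eqref{5.11} explicitly by separation of variables in the $x_1$-direction combined with the Laplace transform in time, exploiting the fact that the angle $\mathfrak{R}$ is a right angle and the boundary conditions decouple nicely. First I would extend $f$ evenly across $\{x_1 = 0\}$ (so that the Neumann condition on $\mathfrak{R}_1$ is automatically respected) and represent $U$ via the Fourier cosine transform in $x_1$, writing $U(x,t) = \int_0^\infty \hat U(\omega, x_2, t)\cos(\omega x_1)\,d\omega$. The Laplace equation then forces $\hat U(\omega, x_2, t) = \hat a(\omega,t)\, e^{-\omega x_2}$ (selecting the decaying branch so that $U\to 0$ at infinity), and the dynamic condition on $\mathfrak{R}_{2}$ reduces to the scalar ODE $\partial_t \hat a + \mathfrak{C}_0\, \omega\, \hat a = \hat f(\omega,t)$ with $\hat a(\omega,0)=0$. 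Solving this by Duhamel gives $\hat a(\omega,t) = \int_0^t e^{-\mathfrak{C}_0\omega(t-\tau)}\hat f(\omega,\tau)\,d\tau$, and hence an explicit kernel representation for $U$.

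Next I would convert this frequency-side formula into a physical-space Poisson-type kernel $U(x,t) = \int_0^t\!\!\int_{\mathfrak{R}_2} K(x, x_1', t-\tau)\, f(x_1',\tau)\, dx_1'\, d\tau$, where $K$ is obtained by inverting the cosine transform of $e^{-\omega(x_2 + \mathfrak{C}_0(t-\tau))}$; this gives $K$ as a rational/Poisson kernel in the shifted variable $x_2 + \mathfrak{C}_0 s$, which is smooth and has all the decay one needs. From this representation the Schauder estimate $\|U\|_{\CC^{2+\alpha}(\bar{\mathfrak{R}}_T)} + \|\partial_t U/\partial t\|_{\CC^{1+\alpha}(\mathfrak{R}_{2,T})} \le C\|f\|_{\CC^{1+\alpha}(\mathfrak{R}_{2,T})}$ follows by the standard singular-integral / potential-theory machinery: one differentiates under the integral sign, uses the compact support of $f$ in $x$ and its vanishing for $t\le 0$ (assumption \textbf{(h6)}) to control behavior near the corner and at infinity, and estimates Hölder seminorms of the resulting kernels. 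The regularity $\partial_t U \in \CC^{1+\alpha}(\mathfrak{R}_{2,T})$ comes from reading off $\partial_t U$ on $\mathfrak{R}_2$ directly from the boundary relation $\partial_t U = \mathfrak{C}_0 \partial_{x_2}U + f$ together with the $\CC^{2+\alpha}$ bound on $U$ and the $\CC^{1+\alpha}$ bound on $f$. Uniqueness follows from the linearity of the problem together with an energy argument (or the maximum principle applied to the harmonic extension, using the dissipative sign of $\mathfrak{C}_0$ in the dynamic condition).

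The main obstacle will be the corner at the origin, where the Neumann datum on $\mathfrak{R}_1$ meets the dynamic datum on $\mathfrak{R}_2$: one must check that no singularity is generated there, i.e. that the explicit kernel $K$ together with the compatibility built into \textbf{(h6)} (in particular $f\equiv 0$ near $t=0$) yields a genuinely $\CC^{2+\alpha}$-up-to-the-corner solution rather than merely an interior-regular one. The even reflection across $\{x_1=0\}$ handles the Neumann side automatically, but one still has to verify that the reflected data stays in the correct Hölder class and that the mixed second derivatives $\partial_{x_1}\partial_{x_2}U$ are Hölder continuous at the corner — this is where the bulk of the technical estimation lies, and it is the step I would carry out most carefully, following the potential-theoretic arguments in \cite{Vo} and the treatment of dynamic boundary conditions in \cite{B1,V2}.
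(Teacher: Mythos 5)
Your proposal follows essentially the same route as the paper: even reflection across $\{x_1=0\}$ to reduce to the upper half-plane, a transform-in-$x_1$/transform-in-$t$ (Fourier cosine plus Duhamel for you, Fourier plus Laplace for the authors) leading to the Poisson-type kernel $K(x,t)=\tfrac{2\mathfrak C_0 t}{(\mathfrak C_0 t)^2+x^2}$ of Lemma~\ref{l2.1}, and then Schauder-type estimates from the kernel representation with uniqueness read off from the coercive bound. The only differences are cosmetic (Duhamel in place of the Laplace transform, and an energy/maximum-principle remark for uniqueness), so this is the same proof in substance.
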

\begin{proof}
First of all, taking into account the homogenous Neumann boundary
condition on $\mathfrak{R}_{1,T}$ (which can be considered as a
symmetry condition), we can study  instead of  problem
\eqref{5.11} in the right angle $\mathfrak{R}_{T}$ the similar
problem in the upper semi-space
\[
\R^{2}_{+T}=\R^{2}_{+}\times(0,T), \quad\R^{2}_{+}=\{(x_{1},x_{2})\in\R^{2}:\quad x_{1}\in\R,\quad x_{2}>0\}.
\]
Namely, introducing a new function
\[
F(x_{1},t)=
\begin{cases}
f(x_{1},t),\quad x_{1}\geq 0,\\
f(-x_{1},t),\quad x_{1}<0,
\end{cases}
\]
we consider the new problem in the unknown function $\mathcal{U}=\mathcal{U}(x,t):\R^{2}_{+T}\to\R$:
\begin{equation}\label{5.12}
\begin{cases}
\Delta \mathcal{U}=0\quad\text{in}\quad \R^{2}_{+T},\\
\dfrac{\partial \mathcal{U}}{\partial t}-\mathfrak{C}_{0}\dfrac{\partial \mathcal{U}}{\partial x_{2}}=F(x_{1},t),\quad\text{on}\quad \R_{T},\\
\mathcal{U}=0\quad\text{if}\quad |x|\to\infty,\quad t\in[0,T],\\
\mathcal{U}(x,0)=0\quad\text{in}\quad \bar{\R}^{2}_{+T}.
\end{cases}
\end{equation}
It is apparent that,

\noindent
$\bullet$ \textit{the function $F$ meets the requirement \textbf{(h6)} and}
\[
\|F\|_{\CC^{1+\alpha}(\bar{\R}_{T})}\leq
C\|f\|_{\CC^{1+\alpha}(\bar{\mathfrak{R}}_{2,T})};
\]

\noindent
$\bullet$ \textit{the solution $\mathcal{U} $ of problem \eqref{5.12} in $\bar{\mathfrak{R}}_{T} $
boils down with the solution $U$ of \eqref{5.11}, i.e.}
$$\mathcal{U}(x,t)\Big|_{\bar{\mathfrak{R}}_{T}}=U(x,t).$$
Thus, it is enough to prove statements of Lemma \ref{l5.3} to  problem \eqref{5.12}.

To this end, applying standard Fourier and Laplace transformation with respect to $x_{1}$ and $t$, correspondingly, we construct the  integral representation of the solution to \eqref{5.12}
\[
\mathcal{U}(x,t)=\int_{0}^{t}d\tau\int_{-\infty}^{+\infty}  F(t-\tau,x_{1}-\zeta)K(\zeta,\tau)d\zeta
\]
with the kernel $K$ defined with \eqref{2.3}.

After that, taking advantage of Lemma \ref{l2.1} and recasting the arguments of Chapter 4 in \cite{LSU}, we arrive at the estimate
\[
\|\mathcal{U}\|_{\CC^{2+\alpha}(\bar{\R}^{2}_{+T})}+\|\partial \mathcal{U}/\partial t\|_{\CC^{1+\alpha}(\bar{\R}_{T})}
\leq C\|F\|_{\CC^{1+\alpha}(\bar{\R}_{T})}
\leq C\|f\|_{\CC^{1+\alpha}(\mathfrak{R}_{2,T})}.
\]

Then, substituting the integral representation of $\mathcal{U}$ to the equation, initial and boundary conditions in \eqref{5.12}, and using Lemma \ref{l2.1}, we conclude that the constructed function $\mathcal{U}$ satisfies  all the relations in \eqref{5.12} in the classical sense.

Finally, we note that, the coercive estimate of $\mathcal{U}$ provides the uniqueness of the solution to \eqref{5.12}.
That completes the proof of Lemma \ref{l5.3}.
\end{proof}

%%%%%%%%%%%%%%%%%%%%%%%%%%%%%%%%%%%%%%%%%%%%%%%%%%%%%%%%%%%%%%%%%%%%%

%%%%%%%%%%%%%%%%%%%%%%%%%%%%%%%%%%%%%%%%%%%%%%%%%%%%%%%%%%%%%%%%%%%%%%
\subsection{The one-valued solvability of the linear system $\A \mathbf{z}=\mathcal{F}$}
\label{s5.5}

\noindent
As it follows from \eqref{5.10}, the linear system corresponding to the nonlinear one
has the form
\begin{equation}\label{5.13}
\begin{cases}
\Delta u=f_{0}(x,t)\quad\text{in}\quad \Omega^{\varepsilon}_{ T},\\
u=A(x_{1})\sigma\quad\text{on}\quad \Gamma^{\varepsilon}_{T},\\
\gamma\dfrac{\partial\sigma}{\partial t}+\dfrac{\partial u}{\partial x_{2}}=f_{1}(x,t)\quad\text{on}\quad \Gamma^{\varepsilon}_{ T},\\
\dfrac{\partial u}{\partial x_{1}}=0\quad\text{on}\quad \Gamma^{\varepsilon}_{1,T}\cup\Gamma^{\varepsilon}_{3,T},\\
\dfrac{\partial u}{\partial x_{2}}=0\quad\text{on}\quad \Gamma_{2,T},\\
\sigma(x_{1},0)=0\quad\text{in}\quad [0,l],\\
u(x_{1},x_{2},0)=0\quad\text{in}\quad \bar{\Omega}^{\varepsilon}.
\end{cases}
\end{equation}
Here $A(x_{1}),$ $f_{0}(x,t)$, $f_{1}(x,t)$ are some given functions satisfying the following conditions:

\noindent
\textbf{(h7)(Consistency conditions):}
\begin{align*}
f_{0}(x_{1},x_{2},0)&=f_{0}(0,0,t)=f_{0}(l,0,t)=f_{0}(l,\varepsilon,t)=f_{0}(0,\varepsilon, t)=0\quad\text{for}\quad (x_{1},x_{2})\in\bar{\Omega}^{\varepsilon}, \, t\in[0,T];\\
f_{1}(x_{1},x_{2},0)&=f_{1}(0,\varepsilon,t)=f_{1}(l,\varepsilon,t)=0\quad\text{for}\quad
(x_{1},x_{2})\in\Gamma^{\varepsilon}, \, t\in[0,T];
\end{align*}
the function $A(x_{1})$ is strictly positive, i.e. $A\geq\delta>0$ for all $x_{1}\in[0,l]$.

\noindent
\textbf{(h8)(Regularity of the given functions):}
\[
f_{0}\in\CC^{\alpha}(\bar{\Omega}^{\varepsilon}_{ T}),\qquad
f_{1}\in\CC^{1+\alpha}(\Gamma^{\varepsilon}_{ T}),\qquad
A\in\C^{2+\alpha}([0,l]).
\]
\begin{theorem}\label{t5.2}
Under conditions \textbf{(h1), (h7)} and \textbf{(h8)}, for some $T>0$ and any fixed
positive $\varepsilon$, problem \eqref{5.13} admits a unique
classical solution $(u,\sigma)$ satisfying the regularity
$
u\in\CC^{2+\alpha}(\bar{\Omega}^{\varepsilon}_{
T})$ and
$\sigma\in\hat{\CC}^{2+\alpha}(\Gamma^{\varepsilon}_{T}),
$
and the estimate
\[
\|u\|_{\CC^{2+\alpha}(\bar{\Omega}^{\varepsilon}_{
T})}+\|\sigma\|_{\hat{\CC}^{2+\alpha}(\Gamma^{\varepsilon}_{
T})}\leq C[\|f_{0}\|_{\CC^{\alpha}(\bar{\Omega}^{\varepsilon}_{
T})}+ \|f_{1}\|_{\CC^{1+\alpha}(\Gamma^{\varepsilon}_{T})} ]
\]
with the constant $C$ independent of the right-hand sides in \eqref{5.13}.
\end{theorem}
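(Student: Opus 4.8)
The plan is to follow the classical Schauder technique of frozen coefficients, in which the crucial building block is the model problem with a dynamic boundary condition in the right angle supplied by Lemma \ref{l5.3}. First I would dispose of the inhomogeneity $f_{0}$ in the elliptic equation. For each fixed $t$ let $w(\cdot,\cdot,t)$ be the solution of the mixed boundary value problem $\Delta w=f_{0}(\cdot,\cdot,t)$ in $\Omega^{\varepsilon}$ with $w=0$ on $\Gamma^{\varepsilon}$ and $\partial w/\partial n=0$ on $\Gamma^{\varepsilon}_{1}\cup\Gamma^{\varepsilon}_{3}\cup\Gamma_{2}$; the consistency conditions \textbf{(h7)} at the four corners and at $t=0$ guarantee, via Schauder estimates for mixed problems in a rectangle with right-angle corners (see \cite{Vo}), that $w\in\CC^{2+\alpha}(\bar\Omega^{\varepsilon}_{T})$ with $\|w\|_{\CC^{2+\alpha}(\bar\Omega^{\varepsilon}_{T})}\le C\|f_{0}\|_{\CC^{\alpha}(\bar\Omega^{\varepsilon}_{T})}$ and $w(\cdot,\cdot,0)=0$. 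Replacing $u$ by $u-w$ we may assume $f_{0}\equiv0$, at the cost of replacing $f_{1}$ by $f_{1}-\partial w/\partial x_{2}|_{\Gamma^{\varepsilon}}\in\CC^{1+\alpha}(\Gamma^{\varepsilon}_{T})$, which still satisfies \textbf{(h7)}.

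With $f_{0}=0$, for each $t$ the function $u(\cdot,\cdot,t)$ is the harmonic extension of the Dirichlet datum $A\sigma(\cdot,t)$ from $\Gamma^{\varepsilon}$ subject to the homogeneous Neumann conditions on the remaining sides; writing $\mathcal{E}$ for this extension operator, $u=\mathcal{E}[A\sigma]$. Substituting into the third relation of \eqref{5.13} collapses the system into the single evolution equation
\[
\gamma\frac{\partial\sigma}{\partial t}+\Lambda^{\varepsilon}[A\sigma]=f_{1}\quad\text{on }\Gamma^{\varepsilon}_{T},\qquad \sigma(x_{1},0)=0,
\]
where $\Lambda^{\varepsilon}:\varphi\mapsto\partial_{x_{2}}\mathcal{E}[\varphi]|_{\Gamma^{\varepsilon}}$ is the Dirichlet-to-Neumann operator, which is of order one and positive. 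This is a ``fractional parabolic'' equation: since $\Lambda^{\varepsilon}$ has order one and $A\ge\delta>0$, its solution gains exactly one spatial derivative over the data, which is precisely consistent with the announced regularity $\sigma\in\C([0,T];\C^{2+\alpha})$, $\sigma_{t}\in\C([0,T];\C^{1+\alpha})$, and with the regularity asserted in Lemma \ref{l5.3}.

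To construct and estimate the solution I would run the parametrix (regularizer) scheme. Cover $\bar\Omega^{\varepsilon}$ by small neighbourhoods with a subordinate partition of unity: near interior points of $\Omega^{\varepsilon}$ use the interior Schauder estimate for $\Delta$; near interior points of $\Gamma^{\varepsilon}$ use the half-space dynamic-boundary-condition problem \eqref{5.12} from the proof of Lemma \ref{l5.3}, and near the upper corners $(0,\varepsilon)$, $(l,\varepsilon)$ use the full quarter-plane problem \eqref{5.11}, the homogeneous Neumann condition on the vertical sides serving as the symmetry condition there; near the bottom side $\Gamma_{2}$, the lower corners $(0,0)$, $(l,0)$ and the lateral sides use the standard model problems for $\Delta$ with Neumann/Dirichlet data, in which $t$ enters merely as a parameter. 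Gluing the local representations produces an approximate right inverse $\mathcal{R}$ of $\A$ together with the a priori bound
\[
\|u\|_{\CC^{2+\alpha}(\bar\Omega^{\varepsilon}_{T})}+\|\sigma\|_{\hat\CC^{2+\alpha}(\Gamma^{\varepsilon}_{T})}\le C\big(\|f_{0}\|_{\CC^{\alpha}(\bar\Omega^{\varepsilon}_{T})}+\|f_{1}\|_{\CC^{1+\alpha}(\Gamma^{\varepsilon}_{T})}\big)+C\,\omega(T)\big(\|u\|_{\CC^{2+\alpha}(\bar\Omega^{\varepsilon}_{T})}+\|\sigma\|_{\hat\CC^{2+\alpha}(\Gamma^{\varepsilon}_{T})}\big),
\]
where $\omega(T)\to0$ as $T\to0$ collects the commutator and lower-order contributions; these are made small by choosing $T$ small after interpolating the lower-order norms and exploiting the vanishing of $u$ and $\sigma$ at $t=0$ (which, together with the bound on $\sigma_{t}$ and the maximum principle for $u$, gives factors $T^{\mu}$). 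Absorbing the last term yields the asserted estimate, whence $\A\mathcal{R}=\mathrm{Id}+\mathcal{T}$ with $\|\mathcal{T}\|<1$ for $T$ small, so $\A$ is invertible on the relevant spaces (alternatively one may use the continuation method in a parameter linking \eqref{5.13} to a solvable reference problem); uniqueness follows by applying the same a priori estimate to the difference of two solutions with $f_{0}=f_{1}=0$.

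The main obstacle is the behaviour at the two upper corner points, where the dynamic condition on $\Gamma^{\varepsilon}$ meets the homogeneous Neumann condition on $\Gamma^{\varepsilon}_{1}$, $\Gamma^{\varepsilon}_{3}$: a naive Dirichlet/Neumann right angle would force an $r^{1/2}$-type singularity and wreck the $\CC^{2+\alpha}$ regularity. The resolution is that the correct local model there is not the elliptic Dirichlet/Neumann corner but the dynamic-boundary-condition corner \eqref{5.11} of Lemma \ref{l5.3}, whose solution is shown to be $\CC^{2+\alpha}$ up to the vertex; together with the compatibility conditions $f_{1}(0,\varepsilon,t)=f_{1}(l,\varepsilon,t)=0$ in \textbf{(h7)}, this is exactly what is needed to close the Schauder scheme. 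The remaining work — verifying the mapping properties of $\Lambda^{\varepsilon}$ (equivalently, the kernel estimates behind Lemma \ref{l5.3}), assembling the partition of unity, and tracking constants, which for fixed $\varepsilon$ causes no trouble — is lengthy but routine, in the spirit of Chapter 4 of \cite{LSU}.
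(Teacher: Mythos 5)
Your proposal is substantively the same as the paper's proof: both remove $f_{0}$ by solving an elliptic mixed boundary value problem with Volkov-type Schauder estimates, both reduce the coupled system to a single evolution problem with a dynamic boundary condition on $\Gamma^{\varepsilon}$ (you eliminate $u$ to obtain a Dirichlet-to-Neumann equation for $\sigma$, the paper eliminates $\sigma$ via $\sigma=w/A$ to obtain problem \eqref{5.15} for $w$; these are the same reduction seen from opposite ends), and both close the argument via a Schauder partition-of-unity scheme whose key local model is precisely Lemma \ref{l5.3}, with the half-plane version used on the interior of $\Gamma^{\varepsilon}$ and the right-angle version at the upper corners. The one genuine difference is the existence mechanism. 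The paper first establishes existence and uniqueness of a \emph{weak} solution to \eqref{5.15} in $L^{\infty}((0,T);W_{0}^{1,2})$ with $w_{t}\in L^{2}(\Gamma^{\varepsilon}_{T})$, relying on Grisvard's variational results, and only then upgrades regularity through the Schauder estimate; you instead construct an approximate right inverse (regularizer) $\mathcal{R}$ with $\A\mathcal{R}=\mathrm{Id}+\mathcal{T}$, $\|\mathcal{T}\|<1$ for small $T$, or invoke the continuation method. Both routes work; the weak-solution path supplies an object whose regularity can be bootstrapped locally, which somewhat simplifies gluing, while your regularizer route avoids introducing a separate weak formulation and proves existence and the a~priori bound in a single sweep. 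Your identification of the upper-corner model and of the role of the compatibility conditions $f_{1}(0,\varepsilon,t)=f_{1}(l,\varepsilon,t)=0$ in \textbf{(h7)} as the crux is correct and matches the paper.
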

\begin{proof}
It is convenient to reduce linear system \eqref{5.13} to the same
problem with homogenous equation. To this end, we apply Theorem
3.1 \cite{Vo} to the following linear problem with the unknown
function $\mathfrak{U}=\mathfrak{U}(x,t):\Omega^{\varepsilon}_{
T}\to\R$
\begin{equation*}
\begin{cases}
\Delta \mathfrak{U}=f_{0}(x,t)\quad\text{in}\quad \Omega^{\varepsilon}_{T},\\
\mathfrak{U}=0\quad\text{on}\quad \Gamma^{\varepsilon}_{ T},\\
\dfrac{\partial \mathfrak{U}}{\partial x_{1}}=0\quad\text{on}\quad \Gamma^{\varepsilon}_{1,T}\cup\Gamma^{\varepsilon}_{3,T},\\
\dfrac{\partial \mathfrak{U}}{\partial x_{2}}=0\quad\text{on}\quad \Gamma_{2,T},\\
\mathfrak{U}(x_{1},x_{2},0)=0\quad\text{in}\quad
\bar{\Omega}^{\varepsilon},
\end{cases}
\end{equation*}
and deduce the existence of a unique solution
$\mathfrak{U}\in\CC^{2+\alpha}(\bar{\Omega}^{\varepsilon}_{ T})$
satisfying relations
\[
\frac{\partial \mathfrak{U}}{\partial t}=0\quad\text{on}\quad \Gamma^{\varepsilon}_{ T},\quad
\|\mathfrak{U}\|_{\CC^{2+\alpha}(\bar{\Omega}^{\varepsilon}_{
T})}\leq C \|f_{0}\|_{\CC^{\alpha}(\bar{\Omega}^{\varepsilon}_{
T})}.
\]
Then we look for a solution to the original problem \eqref{5.13} in the form
\[
u=\mathfrak{U}+w,
\]
where the unknown function $w$ solves the problem
\begin{equation}\label{5.14}
\begin{cases}
\Delta w=0\quad\text{in}\quad \Omega^{\varepsilon}_{ T},\\
w=A(x_{1})\sigma\quad\text{on}\quad \Gamma^{\varepsilon}_{ T},\\
\gamma\dfrac{\partial\sigma}{\partial t}+\dfrac{\partial w}{\partial x_{2}}=\bar{f}_{1}(x,t)\quad\text{on}\quad \Gamma^{\varepsilon}_{ T},\\
\dfrac{\partial w}{\partial x_{1}}=0\quad\text{on}\quad \Gamma^{\varepsilon}_{1,T}\cup\Gamma^{\varepsilon}_{3,T},\\
\dfrac{\partial w}{\partial x_{2}}=0\quad\text{on}\quad \Gamma_{2,T},\\
\sigma(x_{1},0)=0\quad\text{in}\quad [0,l],\\
w(x_{1},x_{2},0)=0\quad\text{in}\quad \bar{\Omega}^{\varepsilon}.
\end{cases}
\end{equation}
Here $
\bar{f}_{1}(x,t)=f(x,t)-\frac{\partial \mathfrak{U}}{\partial x_{2}}\Big|_{\Gamma^{\varepsilon}_{ T}}$ and
\[
\|\bar{f}_{1}\|_{\CC^{1+\alpha}(\Gamma^{\varepsilon}_{ T})}\leq
C[\|f_{1}\|_{\CC^{1+\alpha}(\Gamma^{\varepsilon}_{ T})}+
\|f_{0}\|_{\CC^{\alpha}(\bar{\Omega}^{\varepsilon}_{T})}].
\]
In summary, we conclude that it is enough to prove Theorem
\ref{t5.2} to problem \eqref{5.14}. To this end, first, we reduce
the analyze of the linear system \eqref{5.14} to the study of the
linear initial-boundary value problem with the dynamic boundary
condition for the function $w$. Namely, in the light of the first
condition on $\Gamma^{\varepsilon}_{ T}$ in \eqref{5.14} and the
properties of the function $A$, we assert that the function $w$
solves the problem
\begin{equation}\label{5.15}
\begin{cases}
\Delta w=0\quad\text{in}\quad \Omega^{\varepsilon}_{T},\\
\dfrac{\partial w}{\partial t}+\dfrac{A(x_{1})}{\gamma}\dfrac{\partial w}{\partial x_{2}}=\dfrac{A(x_{1})}{\gamma}\bar{f}_{1}(x,t)\quad\text{on}\quad
 \Gamma^{\varepsilon}_{ T},\\
\dfrac{\partial w}{\partial x_{1}}=0\quad\text{on}\quad \Gamma^{\varepsilon}_{1,T}\cup\Gamma^{\varepsilon}_{3,T},\\
\dfrac{\partial w}{\partial x_{2}}=0\quad\text{on}\quad \Gamma_{2,T},\\
w(x_{1},x_{2},0)=0\quad\text{in}\quad \bar{\Omega}^{\varepsilon}.
\end{cases}
\end{equation}

Thus, we are left to  prove Theorem \ref{t5.2} to problem \eqref{5.15}. The strategy of this proof is the following: first, we obtain the existence and uniqueness of a weak solutions to \eqref{5.15}. Then, we show that the weak solution is  more regular.

\noindent
$\bullet$ At this point, we define the weak solution of \eqref{5.15} as the function $w$ satisfying regularity
\[
w\in L^{\infty}((0,T);
W_{0}^{1,2}(\Omega^{\varepsilon}\cup\Gamma^{\varepsilon}))\quad\text{and}\quad
\frac{\partial w}{\partial t} \in L^{2}(\Gamma^{\varepsilon}_{
T}),
\]
 and the identity
\[
\int\int\limits_{\Omega^{\varepsilon}_{T}}\nabla w\nabla \Psi dx
dt+\gamma\int\limits_{\Gamma^{\varepsilon}_{ T}}\frac{\partial
w}{\partial t}\frac{\Psi}{A(x_{1})}d\omega
dt=\gamma\int\limits_{\Gamma^{\varepsilon}_{ T}}\bar{f}_{1}\Psi
d\omega dt
\]
for any $\Psi\in L^{2}((0,T);
W_{0}^{1,2}(\Omega^{\varepsilon}\cup\Gamma^{\varepsilon}))$.

Standard arguments and results from \cite[Sec. 1]{G}
provide  both the existence of the weak solution in the sense written above, and the validity of the estimates
\begin{align}\label{5.16}
\|w\|_{L^{\infty}((0,T); W^{1,2}(\Omega^{\varepsilon}))}&\leq C\|\bar{f}_{1}\|_{L^{\infty}((0,T); W^{1,2}(\Gamma^{\varepsilon}))},\\
\notag
\|w\|_{L^{\infty}(\bar{\Omega}^{\varepsilon}_{ T})}&\leq C\|\bar{f}_{1}\|_{L^{\infty}((0,T); W^{1,2}(\Gamma^{\varepsilon}))}
\leq C
\|\bar{f}_{1}\|_{\CC^{1+\alpha}(\Gamma^{\varepsilon}_{T})}.
\end{align}

\noindent
$\bullet$  Coming to the regularity of the weak solution, we apply the Schauder approach. Namely, using the partition of unity together with the local diffeomorphisms, Lemma \ref{l5.3}, the second estimates in \eqref{5.16} and the results of Section 3 in \cite{LU}    arrive at the inequality
\[
\|w\|_{\CC^{2+\alpha}(\bar{\Omega}^{\varepsilon}_{ T})}+\|\partial w/\partial  t\|_{\hat{\CC}^{2+\alpha}(\Gamma^{\varepsilon}_{ T})}\leq
C \|\bar{f}_{1}\|_{\CC^{1+\alpha}(\Gamma^{\varepsilon}_{T})}
 \leq
 C[\|f_{0}\|_{\CC^{\alpha}(\bar{\Omega}^{\varepsilon}_{ T})}+
\|f_{1}\|_{\CC^{1+\alpha}(\Gamma^{\varepsilon}_{ T})}.
\]
Finally, recalling that
$\sigma=\frac{w}{A}\Big|_{\Gamma^{\varepsilon}_{ T}}$ and taking
in advantage of the smoothness and the positivity of $A$ (see
assumptions \textbf{(h7)} and \textbf{(h8)}), we arrive at  $\sigma\in
\hat{\CC}^{2+\alpha}(\Gamma^{\varepsilon}_{ T})$ and the
corresponding estimates hold. That completes the proof of
Theorem \ref{t5.2}.
\end{proof}

%%%%%%%%%%%%%%%%%%%%%%%%%%%%%%%%%%%%%%%%%%%%%%%%%%%%%%%%%%%%%%%%%%%%%

%%%%%%%%%%%%%%%%%%%%%%%%%%%%%%%%%%%%%%%%%%%%%%%%%%%%%%%%%%%%%%%%%%%%%%
\subsection{Solvability of nonlinear problem \eqref{5.10}: proof of Theorem \ref{t5.1}}
\label{s5.6}

\noindent
First we introduce the functional spaces $\mathcal{H}_{1}$ and  $\mathcal{H}_{2}$ such that $\mathbf{z}\in\mathcal{H}_{1}$ and $\mathcal{F}\mathbf{z}\in\mathcal{H}_{2}$:
\begin{align*}
\mathcal{H}_{1}&=\CC_{0}^{2+\alpha}(\bar{\Omega}^{\varepsilon}_{ T})\times\hat{\CC}_{0}^{2+\alpha}(\Gamma^{\varepsilon}_{ T}),\\
\mathcal{H}_{2}&=\CC_{0}^{\alpha}(\bar{\Omega}^{\varepsilon}_{
T})\times\CC_{0}^{2+\alpha}(\Gamma^{\varepsilon}_{
T})\times\CC_{0}^{1+\alpha}(\Gamma^{\varepsilon}_{ T})\times
\CC_{0}^{1+\alpha}(\Gamma^{\varepsilon}_{1, T})\times
\CC_{0}^{1+\alpha}(\Gamma_{2,
T})\times\CC_{0}^{1+\alpha}(\Gamma^{\varepsilon}_{3,T}),
\end{align*}
and
\begin{align*}
\|\mathbf{z}\|_{\mathcal{H}_{1}}&=\|(u,\sigma)\|_{\mathcal{H}_{1}}=\|u\|_{\CC^{2+\alpha}(\bar{\Omega}^{\varepsilon}_{
T})}
+\|\sigma\|_{\hat{\CC}^{2+\alpha}(\Gamma^{\varepsilon}_{T})},\\
\|\mathcal{F}\mathbf{z}\|_{\mathcal{H}_{2}}&=\|(\mathcal{F}_{0}(\mathbf{z}),0,\mathcal{F}_{1}(\mathbf{z}),0,0)\|_{\mathcal{H}_{2}}
=\|\mathcal{F}_{0}(\mathbf{z})\|_{\CC^{\alpha}(\bar{\Omega}^{\varepsilon}_{
T})}
+\|\mathcal{F}_{1}(\mathbf{z})\|_{\CC^{1+\alpha}(\Gamma^{\varepsilon}_{
T})}.
\end{align*}
Taking into account representation \eqref{5.10}, we have
\[
\A \mathbf{z}=\FF(x,t)+\bar{\mathcal{F}}(\mathbf{z}),
\]
where  $\A:\mathcal{H}_{1}\to\mathcal{H}_{2}$ is the linear operator studied in Subsection \ref{s5.5}, the vector $\FF$ is constructed by the initial data, $\bar{\mathcal{F}}(\mathbf{z})$ contains the elements described in Corollary \ref{c5.1}.

After that, the direct calculations (see e.g. Section 5.2 \cite{V3}) and Theorem \ref{t5.2}, Corollary \ref{c5.1} and Lemma \ref{l5.2} arrive at the statement.
\begin{lemma}\label{l5.4}
Let $B_{d},$ $B_{d}\subset\mathcal{H}_{1},$ be a ball with the center located in the origin and the radius of $d$. We assume that conditions of Theorem \ref{t5.1} hold. Then, for $\mathbf{z}\in B_{d}$, the following estimates hold:
\[
\|\mathcal{F}(0)\|_{\mathcal{H}_{2}}\leq C_{1}(T),\qquad
\|\mathcal{F}(\mathbf{z}_{1})-\mathcal{F}(\mathbf{z}_{2})\|_{\mathcal{H}_{2}}\leq C_{2}(T,d)\|\mathbf{z}_{1}-\mathbf{z}_{2}\|_{\mathcal{H}_{1}},
\]
where the quantities $C_{1}(T)$ and $C_{2}(d,T)$ vanish if $T, d\to 0$.
\end{lemma}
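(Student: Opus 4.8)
The plan is to read off the structure of $\mathcal{F}_{0},\mathcal{F}_{1}$ from Corollary~\ref{c5.1} and to estimate each family of terms separately, using the coercive bounds of Lemma~\ref{l5.2} for $p^{\varepsilon}_{0}$ and $s$. The crucial point is that the substitution \eqref{5.9} has been chosen so that, after the cancellations resting on $\Delta p^{\varepsilon}_{0}=0$ and $p^{\varepsilon}_{0}|_{\Gamma^{\varepsilon}}=0$, every surviving term of $\mathcal{F}_{0},\mathcal{F}_{1}$ is of one of three kinds: (T) a top-order derivative of $u$ or $\sigma$ multiplied by a coefficient, built from $\rho=\sigma+s$ and its $x_{1}$-derivatives, that vanishes when $\rho\equiv0$; (Q) a ``quadratic'' expression in $(u,\sigma)$ and their derivatives; (L) a lower-order term of the same two kinds. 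The regularity available --- $p^{\varepsilon}_{0}\in\C^{3+\alpha}(\bar\Omega^{\varepsilon})$ and $s\in\hat{\CC}^{2+\alpha}(\Gamma^{\varepsilon}_{T})$ by Lemma~\ref{l5.2}, and $u\in\CC^{2+\alpha}_{0}(\bar\Omega^{\varepsilon}_{T})$, $\sigma\in\hat{\CC}^{2+\alpha}_{0}(\Gamma^{\varepsilon}_{T})$ for $\mathbf{z}\in B_{d}$ --- makes the relevant products land in $\CC^{\alpha}(\bar\Omega^{\varepsilon}_{T})$ and $\CC^{1+\alpha}(\Gamma^{\varepsilon}_{T})$, while the consistency relations of Corollary~\ref{c5.1} (vanishing at $t=0$ and at the corner points, plus the support of $\chi(\Lambda)$ and $\chi_{i}$) put $\mathcal{F}\mathbf{z}$ into the subspaces defining $\mathcal{H}_{2}$. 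Constants may depend on the fixed $\varepsilon$ through $\|p^{\varepsilon}_{0}\|_{\C^{3+\alpha}(\bar\Omega^{\varepsilon})}$, which is harmless since $T,d$ will be taken small in terms of $\varepsilon$.

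Two elementary bounds carry most of the weight. Since $s$ is linear in $t$ (Lemma~\ref{l5.2}), $\|s\|_{\CC^{2+\alpha}(\Gamma^{\varepsilon}_{T})}+\|\partial_{t}s\|_{\CC^{1+\alpha}(\Gamma^{\varepsilon}_{T})}\le CT$. Integrating $\partial_{t}\sigma\in\CC^{1+\alpha}$ from its zero initial trace gives, for $\mathbf{z}\in B_{d}$, $\|\sigma\|_{\CC^{1+\alpha}(\Gamma^{\varepsilon}_{T})}+\|\partial_{x_{1}}\sigma\|_{\CC^{\alpha}(\Gamma^{\varepsilon}_{T})}\le CTd$ and, for differences, $\|\sigma_{1}-\sigma_{2}\|_{\CC^{1+\alpha}}+\|\partial_{x_{1}}(\sigma_{1}-\sigma_{2})\|_{\CC^{\alpha}}\le CT\,\|\mathbf{z}_{1}-\mathbf{z}_{2}\|_{\mathcal{H}_{1}}$, whereas for the genuine second spatial derivative only the ``plain'' estimates $\|\partial^{2}_{x_{1}}\sigma\|_{\CC^{\alpha}}\le d$ and $\|u\|_{\CC^{2+\alpha}(\bar\Omega^{\varepsilon}_{T})}\le d$ (with no extra $T$) are available. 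For $T,d$ small the denominators $1+\chi'(\Lambda)\rho$ in \eqref{5.5*} stay $\ge1/2$, so the Jacobian factors are smooth functions of $\rho,\partial_{x_{1}}\rho,\partial^{2}_{x_{1}}\rho$ with uniformly bounded derivatives.

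For the first estimate I evaluate at $\mathbf{z}=0$: then $\mathcal{F}_{0}(0,0)$, $\mathcal{F}_{1}(0,0)$ are the explicit expressions of Corollary~\ref{c5.1}, each term of which carries a factor $s$, $\partial_{x_{1}}s$ or $\partial^{2}_{x_{1}}s$ times fixed derivatives of $p^{\varepsilon}_{0}$ and bounded rational functions of $\chi's$; the first bound above yields $\|\mathcal{F}(0)\|_{\mathcal{H}_{2}}\le C_{1}(T):=C(\varepsilon)\,T\to0$. For the Lipschitz estimate I treat $\mathcal{F}_{j}(\mathbf{z}_{1})-\mathcal{F}_{j}(\mathbf{z}_{2})$ term by term. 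A term of type (T), schematically $a(\sigma)\,\partial^{2}\!\big(u+p^{\varepsilon}_{0}+\chi\sigma\,\partial_{x_{2}}p^{\varepsilon}_{0}\big)$, has $a(\sigma)$ either proportional to $\rho$ or $\partial_{x_{1}}\rho$ (hence $O(T+Td)$), or equal to $\partial^{2}_{x_{1}}\rho$ times a coefficient proportional to $\rho$ or $s$ (hence $O(T+d)$); writing the increment as $(a(\sigma_{1})-a(\sigma_{2}))\,\partial^{2}(\,\cdot\,)_{1}+a(\sigma_{2})\,\partial^{2}\big((\,\cdot\,)_{1}-(\,\cdot\,)_{2}\big)$ and inserting the bounds of the previous paragraph gives a contribution $\le C(\varepsilon)(T+d)\,\|\mathbf{z}_{1}-\mathbf{z}_{2}\|_{\mathcal{H}_{1}}$. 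A term of type (Q), a product of two factors each linear in $(u,\sigma)$ and first derivatives, obeys $\|G(\mathbf{z}_{1})-G(\mathbf{z}_{2})\|\le C\big(\|\mathbf{z}_{1}\|_{\mathcal{H}_{1}}+\|\mathbf{z}_{2}\|_{\mathcal{H}_{1}}\big)\|\mathbf{z}_{1}-\mathbf{z}_{2}\|_{\mathcal{H}_{1}}\le Cd\,\|\mathbf{z}_{1}-\mathbf{z}_{2}\|_{\mathcal{H}_{1}}$; this is the sole mechanism for $\mathcal{F}_{1}$, since on $\Gamma^{\varepsilon}$ one has $\partial^{2}_{x_{2}}p^{\varepsilon}_{0}=-\partial^{2}_{x_{1}}p^{\varepsilon}_{0}=0$, whence $\mathcal{F}_{1}=-(\partial_{x_{1}}\rho)^{2}\big(\partial_{x_{2}}u+\partial_{x_{2}}p^{\varepsilon}_{0}\big)$ is quadratic in $\partial_{x_{1}}\rho$. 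Type (L) terms are identical. Summing, $\|\mathcal{F}(\mathbf{z}_{1})-\mathcal{F}(\mathbf{z}_{2})\|_{\mathcal{H}_{2}}\le C_{2}(T,d)\,\|\mathbf{z}_{1}-\mathbf{z}_{2}\|_{\mathcal{H}_{1}}$ with $C_{2}(T,d):=C(\varepsilon)(T+d)\to0$; the routine computation follows the scheme of \cite[Sec.~5.2]{V3}.

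The main obstacle is precisely this bookkeeping: one must verify that, after the cancellations produced by the second line of \eqref{5.9} (which use $\Delta p^{\varepsilon}_{0}=0$ and $p^{\varepsilon}_{0}|_{\Gamma^{\varepsilon}}=0$), no surviving term of $\mathcal{F}_{0}$ or $\mathcal{F}_{1}$ contains $\sigma$ or $\partial^{2}_{x_{1}}\sigma$ multiplied by a $\mathbf{z}$-independent coefficient that does not vanish at $t=0$ --- this is exactly what makes $C_{2}(T,d)$ tend to $0$ rather than remain $O(1)$. A secondary effort is to keep track of which factors supply a power of $T$ (those involving $s$ or a single $x_{1}$-derivative of $\sigma$) versus those that only supply a power of $d$ (those involving $\partial^{2}_{x_{1}}\sigma$ or a second derivative of $u$), and to check the initial- and corner-consistency of the resulting expressions; both are routine given Corollary~\ref{c5.1}, and Theorem~\ref{t5.2} then closes the contraction argument of Subsection~\ref{s5.6}.
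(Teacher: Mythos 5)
Your attempt follows precisely the route the paper indicates (a one-line reference to the calculations of [V3, Sec.~5.2] together with Corollary~\ref{c5.1} and Lemma~\ref{l5.2}), and several of your auxiliary observations are correct and useful: $s$ is exactly linear in $t$, so $\|s\|_{\hat{\CC}^{2+\alpha}(\Gamma^{\varepsilon}_T)}\le CT$; for $\sigma\in\hat{\CC}^{2+\alpha}_{0}$ one can integrate $\sigma_t\in\CC^{1+\alpha}$ from its vanishing trace to gain a factor $T$ in $\|\sigma\|_{\CC^{1+\alpha}}$ and $\|\partial_{x_1}\sigma\|_{\CC^{\alpha}}$; and on $\Gamma^{\varepsilon}$ the term of $\mathcal{F}_1$ multiplying $\sigma$ indeed vanishes because $p^{\varepsilon}_0|_{\Gamma^{\varepsilon}}=0$ together with $\Delta p^{\varepsilon}_0=0$ forces $\partial^2_{x_2}p^{\varepsilon}_0=0$ there.

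However, there is a gap at exactly the place you yourself flag as ``the main obstacle'' and then declare routine. You assert that after the cancellations no surviving term of $\mathcal{F}_0$ contains $\partial^2_{x_1}\sigma$ with a $\mathbf{z}$-independent coefficient that fails to vanish at $t=0$, and you lean on Corollary~\ref{c5.1} for this, but you never carry out the check. Doing so raises a real concern: the last summand of $-\mathcal{F}_0$, $\Delta\big(\chi\,\sigma\,\partial_{x_2}p^{\varepsilon}_0\big)$, expands (using $\Delta\partial_{x_2}p^{\varepsilon}_0=0$) into
\[
\chi\,\partial_{x_2}p^{\varepsilon}_0\;\partial^2_{x_1}\sigma
\;+\;2\chi\,\partial^2_{x_1x_2}p^{\varepsilon}_0\;\partial_{x_1}\sigma
\;+\;\big(\chi''\partial_{x_2}p^{\varepsilon}_0+2\chi'\partial^2_{x_2}p^{\varepsilon}_0\big)\sigma,
\]
whose first term has coefficient $\chi(\Lambda)\,\partial_{x_2}p^{\varepsilon}_0$: this is $\mathbf{z}$-independent, does not depend on $t$, and is nonzero near $\Gamma^{\varepsilon}$ (indeed $\partial_{x_2}p^{\varepsilon}_0<0$ there by \eqref{5.3}). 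The companion $\sigma_{x_1x_1}$ coming from $\big(\partial^2_{y_1}x_2+\partial^2_{y_2}x_2\big)\partial_{x_2}v$ via $\rho_{x_1x_1}$ contributes approximately $-2\chi\,\partial_{x_2}p^{\varepsilon}_0\,\sigma_{x_1x_1}$ at $\mathbf{z}=0$, $t=0$, so the two pieces do not cancel: the net coefficient of $\sigma_{x_1x_1}$ in $\mathcal{F}_0$ at $t=0$ is $\chi\,\partial_{x_2}p^{\varepsilon}_0\ne 0$. Since $\hat{\CC}^{2+\alpha}_0$ controls $\sigma_t$ and $\sigma_{tx_1}$ but not $\sigma_{tx_1x_1}$, no integration-in-time trick gives $\partial^2_{x_1}\sigma$ a factor of $T$, and interpolation gives no uniform gain on $B_d$; consequently this term appears to contribute $O(1)\,\|\mathbf{z}_1-\mathbf{z}_2\|_{\mathcal{H}_1}$, not $o(1)$, to the increment of $\mathcal{F}_0$. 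Unless there is a further cancellation you and the paper have both left implicit, or this linear-in-$\sigma_{x_1x_1}$ piece ought to have been moved into the linear operator $\mathcal{A}$ before linearizing, the claim $C_2(T,d)\to 0$ is not established by the argument as written — and a proof of Lemma~\ref{l5.4} must address this term explicitly.
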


Then, due to the operator $\A$ satisfies all the assumptions of Theorem \ref{t5.2},  nonlinear problem \eqref{5.10} can be rewritten as
\[
\mathbf{z}=\A^{-1}\FF(x,t)+\A^{-1}\bar{\mathcal{F}}(\mathbf{z})\equiv\mathcal{T}(\mathbf{z}).
\]
Finally, inequalities in Lemma \ref{l5.4} ensure that for sufficiently small $T$ and $d$ the nonlinear operator $\mathcal{T}(\mathbf{z})$ meets the requirements of the fixed point theorem for a contraction operator. Hence,  the equation  $\mathbf{z}=\mathcal{T}(\mathbf{z})$ has the fixed point, which is obviously
a unique solution of \eqref{5.10}. That completes the proof of Theorem \ref{t5.1}. \qed

Actually, with nonessential modification in the proof of Theorem \ref{t3.1}, the very same results hold for  more general configurations of $\Omega^{\varepsilon}$, namely if $\rho(y_{1},0)\neq 0,$ and the function $\Phi^{\varepsilon}$ independent of $\varepsilon$. Thus,  problem \eqref{1.2} is rewritten as
\begin{equation}\label{5.17}
\begin{cases}
\Delta_{y}p^{\varepsilon}=0\qquad\quad\text{in}\quad\Omega^{\varepsilon}(t),\quad t\in(0,T),\\
p^{\varepsilon}=0\qquad\qquad\text{and}\quad \dfrac{\partial p^{\varepsilon}}{\partial \mathbf{n}_{t}}=-\gamma V_{\mathbf{n}}\quad \text{on}\quad \Gamma^{\varepsilon}(t),\, t\in[0,T],\\
\dfrac{\partial p^{\varepsilon}}{\partial \mathbf{n}}=\Phi^{\varepsilon}(y,t)\quad \text{on}\quad\partial\Omega^{\varepsilon}(t)\backslash\Gamma^{\varepsilon}(t), \, t\in[0,T],
\\
\rho(y_{1},0)=\rho_{0}(y_{1}),\quad y_{1}\in[0,l].
\end{cases}
\end{equation}
First, we introduce the following additional hypotheses.

\noindent \textbf{(h9):}
Let the  function
\begin{equation*}
\Phi^{\varepsilon}(y,t)=
\left\{
  \begin{array}{ll}
    \chi_{2}(y_{2}) \varphi_{1}(y_{2},t), & y_2 \in \Gamma^{\varepsilon}_{1}(t), \ \ t\in[0,T], \\[2mm]
   \chi_{1}(y_{1})\, \varphi_{2}(y_{1},t), &  y_1 \in \Gamma_{2}, \ \ t\in[0,T],\\[2mm]
    \chi_{2}(y_{2})\varphi_{3}(y_{2},t), & y_2 \in \Gamma^{\varepsilon}_{3}(t), \ \ t\in[0,T],
  \end{array}
\right.
\end{equation*}
where  $\chi_{i}\in\C_{0}^{\infty}(\R^{1}),$ $i\in \{1,2\},$ are the cut-off functions, $\chi_{1}$ is defined in \textbf{(h3)} and
$$
\chi_{2}(y_{2})=
\begin{cases}
1,\quad\text{if}\quad y_{2}\in[\frac{2\varepsilon}{5} , \frac{3\varepsilon}{5}],\\[2mm]
0,\quad\text{if}\quad y_{2}\notin( \frac{\varepsilon}{5} , \frac{4\varepsilon}{5}),
\end{cases}
$$
$
\varphi_{1} \in\C([0,T];\C^{2+\alpha}[0,\varepsilon]),$ $
\varphi_{2} \in\C([0,T];\C^{2+\alpha}[0,l]),$ $
\varphi_{3} \in\C([0,T];\C^{2+\alpha}[0,\varepsilon]).
$

\noindent \textbf{(h10):}
We assume that the nonnegative  function $\rho_{0}\in \C^{3+\alpha}([0,l])$ meets requirement
\[
\rho_{0}'(0)=\rho_{0}'(l)=0.\]
Note that the last relations mean that $\Gamma^{\varepsilon}$ forms right angles with $\Gamma_{1}^{\varepsilon}$ and $\Gamma_{3}^{\varepsilon}$.

\begin{theorem}\label{t5.7}
Under assumptions \textbf{(h1)}, \textbf{(h4)}, \textbf{(h9)}, \textbf{(h10)}, the results of Theorem \ref{t3.1} hold for problem \eqref{1.1}, \eqref{5.17}.
\end{theorem}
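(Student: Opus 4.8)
The plan is to reproduce the four-step scheme of Section~\ref{s5} (smoothness of the initial pressure $\to$ Hanzawa reduction to a fixed domain $\to$ analysis of the associated linear system $\to$ contraction argument), making at each step the modifications forced by the curved initial domain $\Omega^{\varepsilon}=\Omega^{\varepsilon}(0)=\{0<y_{1}<l,\ 0<y_{2}<\varepsilon+\rho_{0}(y_{1})\}$.

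\textbf{Initial pressure.} Since $\Omega^{\varepsilon}$ is no longer a rectangle, the Fourier construction of Lemma~\ref{l5.1} is not available and must be replaced by elliptic regularity in a corner domain. First I would straighten the free boundary by the diffeomorphism $y_{1}=x_{1}$, $y_{2}=x_{2}\,(\varepsilon+\rho_{0}(x_{1}))/\varepsilon$, which maps $\Omega^{\varepsilon}$ onto the rectangle $(0,l)\times(0,\varepsilon)$ and, thanks to $\rho_{0}\in\C^{3+\alpha}([0,l])$ from \textbf{(h10)}, turns the Laplacian into a second-order uniformly elliptic operator with $\C^{1+\alpha}$ coefficients. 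By \textbf{(h10)} the free boundary is horizontal at $y_{1}=0,l$, so all four corners of the transformed domain are right angles; moreover the support condition on $\chi_{2}$ in \textbf{(h9)} keeps the Neumann data identically zero near every corner, so the mixed Dirichlet--Neumann corner conditions are compatible. Applying Theorem~3.1 of \cite{Vo} to this transformed mixed problem and undoing the diffeomorphism yields the analogue of Lemmas~\ref{l5.1}--\ref{l5.2}: $p^{\varepsilon}_{0}\in\C^{3+\alpha}(\overline{\Omega^{\varepsilon}})$ with the corresponding coercive estimate, and hence the function $s(y_{1},t)=-\frac{t}{\gamma}\,\partial p^{\varepsilon}_{0}/\partial\mathbf{n}\big|_{\Gamma^{\varepsilon}}$ with $s(\cdot,0)=0$, $\partial_{t}s(\cdot,0)=V_{\mathbf{n}}|_{t=0}$, the latter being positive by \textbf{(h4)}.

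\textbf{Reduction and linearization.} Writing $\Gamma^{\varepsilon}(t)=\{y_{2}=\varepsilon+\rho(y_{1},t)\}$ with $\rho(\cdot,0)=\rho_{0}$, I would use the localized Hanzawa map $y_{1}=x_{1}$, $y_{2}=x_{2}+(\rho(x_{1},t)-\rho_{0}(x_{1}))\,\chi(\Lambda)$, $\Lambda=x_{2}-\varepsilon-\rho_{0}(x_{1})$, to carry $\Omega^{\varepsilon}(t)$ onto the fixed (curved) domain $\Omega^{\varepsilon}_{T}$; this produces a nonlinear system of the same structure as \eqref{5.5}--\eqref{5.7}, the presence of $\rho_{0}$ only generating additional lower-order terms that vanish at $t=0$. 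Substituting the analogue of \eqref{5.9} with $\sigma=\rho-\rho_{0}-s$ rewrites the system as $\A\mathbf{z}=\FF+\bar{\mathcal{F}}(\mathbf{z})$, where $\bar{\mathcal{F}}$ has all the properties listed in Corollary~\ref{c5.1}. To invert $\A$ I would again flatten $\Gamma^{\varepsilon}$ to $\{y_{2}=\varepsilon\}$ as in the previous step, reducing the linear system to the type \eqref{5.13} on a rectangle with $\C^{1+\alpha}$ variable coefficients and right-angle corners, so that the model problem in the right angle (Lemma~\ref{l5.3}) and the Schauder patching argument of Subsection~\ref{s5.5} apply with only notational changes, giving the bounded inverse $\A^{-1}\colon\mathcal{H}_{2}\to\mathcal{H}_{1}$.

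\textbf{Contraction and the main difficulty.} With $\A^{-1}$ bounded and the small-Lipschitz estimates of Lemma~\ref{l5.4} still valid (their derivation uses only Corollary~\ref{c5.1} and the regularity of $s$ and $p^{\varepsilon}_{0}$), the map $\mathcal{T}(\mathbf{z})=\A^{-1}\FF+\A^{-1}\bar{\mathcal{F}}(\mathbf{z})$ is a contraction on a small ball of $\mathcal{H}_{1}$ for $T$ sufficiently small; its fixed point, transported back through the two diffeomorphisms, is the unique classical solution $(p^{\varepsilon},\rho)$ with the regularity asserted in Theorem~\ref{t3.1}. I expect the only genuine obstacle to be the first step: one has to verify carefully that the straightening diffeomorphism preserves the compatibility of the mixed Dirichlet--Neumann data at the corners --- this is exactly where \textbf{(h10)} (right angles) and the support of $\chi_{2}$ in \textbf{(h9)} (homogeneous Neumann near the corners) are indispensable --- so that the corner-regularity result of \cite{Vo} can be invoked to produce $p^{\varepsilon}_{0}\in\C^{3+\alpha}$; once this datum is in hand, the remaining steps are routine adaptations of Subsections~\ref{s5.2}--\ref{s5.6}.
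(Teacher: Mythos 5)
Your proposal is correct and matches the paper's intent: the paper itself gives no separate proof of Theorem~\ref{t5.7}, merely asserting that it follows from the proof of Theorem~\ref{t3.1} ``with nonessential modification,'' and your four-step adaptation (boundary-straightening diffeomorphism to reduce $\Omega^{\varepsilon}$ to the rectangle with a variable-coefficient elliptic operator and right-angle corners, the shifted Hanzawa map with $\Lambda=x_{2}-\varepsilon-\rho_{0}(x_{1})$ and $\sigma=\rho-\rho_{0}-s$, Schauder analysis of the linear system via the same right-angle model problem after a second flattening, and the contraction argument) is exactly the elaboration of what those modifications are. You also correctly isolate the two places where \textbf{(h10)} (right angles so the corner geometry is unchanged) and the support condition in \textbf{(h9)} (homogeneous Neumann data near the corners, preserving compatibility) are genuinely needed, which is the only nontrivial point in the adaptation.
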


%%%%%%%%%%%%%%%%%%%%%%%%%%%%%%%%%%%%%%%%%%%%%%%%%%%%%%%%%%%%%%%%%%%%%

%%%%%%%%%%%%%%%%%%%%%%%%%%%%%%%%%%%%%%%%%%%%%%%%%%%%%%%%%%%%%%%%%%%%%%
\section{Proof of Theorem \ref{t4.1}}
\label{s6}

In order to prove Theorem \ref{t4.1} we will use the following approach. First, appealing to technique in \cite{KM,Mel,Mel_Sad}, we obtain formal representation for the solution $p^{\varepsilon}$ and a Cauchy problem for the function~$\S$.
Then in subsection~\ref{s4.2} we justify those constructions by finding the residuals left by the approximation function~$\mathcal{P}^{\varepsilon}$  in problem \eqref{1.1}-\eqref{1.2} and estimate them
using properties of special boundary-layer solutions and Lemma~\ref{l4.5}.

%%%%%%%%%%%%%%%%%%%%%%%%%%%%%%%%%%%%%%%%%%%%%%%%%%%%%%%%%%

%%%%%%%%%%%%%%%%%%%%%%%%%%%%%%%%%%%%%%%%%%%%%%%%%%%%%%%%%%%%
\subsection{Formal asymptotic procedure}\label{s4.1}

Following the approach of \cite{KM,Mel_Sad}, we seek the asymptotics of $p^\varepsilon$ in the form
\begin{equation}\label{anzatz}
p^\varepsilon(y, t) \approx w_0(y_1,t) + \varepsilon \, w_1(y_1,t)
 +  \varepsilon^{2} u_2 \left( y_1, \frac{y_2}{\varepsilon}, t \right)
 +  \varepsilon^{3} u_3 \left( y_1, \frac{y_2}{\varepsilon}, t \right)
\end{equation}
Substituting  representation (\ref{anzatz})  in the relations of \eqref{4.3},
taking into account the view of the function $\Phi^{\varepsilon}$ (see \eqref{Phi}),
and then collecting coefficients at the same power of $\varepsilon$, we conclude that the unknown functions $u_2$ and $u_3$ solve the Neumann problems:
\begin{equation}\label{probl_3}
\begin{cases}
- \dfrac{\partial^{2} u_2}{\partial \xi^2_2 }(y_1, {\xi}_2,t)
  =
\dfrac{\partial^{2} w_0}{\partial y_1^2} (y_1,t),
  \qquad\qquad \xi_2 \in (0, \mathcal{S}(y_1,t)),
\\\\
 \dfrac{\partial u_2}{\partial \xi_2}\big(y_1, \mathcal{S}(y_1,t), t\big)
  =  \dfrac{\partial \mathcal{S}}{\partial y_{1}} \, \dfrac{\partial w_0}{\partial y_{1}} - \gamma \dfrac{\partial \mathcal{S}}{\partial t},
	\\\\
 \dfrac{\partial u_2}{\partial \xi_2}(y_1, 0,t)
  = - \chi_{1}(y_{1})\, \varphi_{2}(y_{1},t),
 \end{cases}
\end{equation}
and
\begin{equation}\label{probl_4}
\begin{cases}
- \dfrac{\partial^{2} u_3}{\partial \xi^2_2 }(y_1, {\xi}_2, t)
 =
\dfrac{\partial^{2} w_1}{\partial y_1^2} (y_1,t),
  \qquad\qquad \xi_2 \in (0, \mathcal{S}(y_1,t)),
\\\\
 \dfrac{\partial u_3}{\partial \xi_2}\big(y_1, \mathcal{S}(y_1,t),t\big)
  =  \dfrac{\partial \mathcal{S}}{\partial y_{1}} \, \dfrac{\partial w_1}{\partial y_{1}},
\\\\
 \dfrac{\partial u_3}{\partial \xi_2}(y_1, 0,t)
  =  0.
 \end{cases}
\end{equation}
Here, the variables $y_1\in [0,l]$ and $t \in [0, T]$ are regarded as  parameters.

Writing down the necessary and sufficient condition for solvability of problem (\ref{probl_3}), we derive the differential equation
\begin{equation}\label{diff_w_0}
\dfrac{\partial}{\partial y_1}\left( \mathcal{S}(y_1,t)  \dfrac{\partial w_0(y_1,t)}{\partial{y_1}}  \right) = \gamma \dfrac{\partial \mathcal{S}}{\partial t}(y_1,t) - \chi_{1}(y_{1})\, \varphi_{2}(y_{1},t), \quad y_1\in (0, l).
\end{equation}
Boundary conditions for \eqref{diff_w_0}, as well the solvability of the corresponding boundary-value problem, will be discussed later.

Let $w_0$ be a solution of \eqref{diff_w_0}. Thus, there exists a solution to problem  \eqref{probl_3} up to an additive value that is a function of  variables $y_1$ and $t.$ Taking the integral condition in problem \eqref{4.3} into account, we can choose this function so  that
\begin{equation}\label{cond_u_2}
  u_2\big(y_1, \mathcal{S}(y_1,t), t\big) = 0, \quad \forall\, y_1 \in (0, l), \ \ \forall \, t \in [0, T],
\end{equation}
which provides a unique choice of the function $u_{2}$.

Recasting the same arguments in the case of problem \eqref{probl_4}, we draw out
\begin{equation}\label{diff_w_1}
\dfrac{\partial}{\partial y_1}\left( \mathcal{S}(y_1,t)  \dfrac{\partial w_1}{\partial{y_1}} (y_1,t) \right) = 0, \quad y_1\in (0, l).
\end{equation}
Since $\Phi^\varepsilon =\mathcal{O}(1)$ (as $\varepsilon \to 0)$  on $\Gamma^{\varepsilon}_{1}(t)$ and $\Gamma^{\varepsilon}_{3}(t),$ the derivative $\frac{\partial w_1}{\partial y_{1}}$ has to be equal zero in the points $y_1=0$ and $y_1 =l.$ This means that the function $w_{1}$ depends  only on $t$, i.e. $w_1=w_{1}(t)$.
 Then, taking into account the integral condition in \eqref{4.3}, we arrive at the identity $w_{1}\equiv 0$.
This equality and the same arguments applied to \eqref{probl_4} result the relation
 $u_3 \equiv 0.$

Thus,  ansatz \eqref{anzatz} is rewritten in the form
\begin{equation*}
 w_0(y_1,t)  +  \varepsilon^{2} u_2 \left( y_1, \dfrac{y_2}{\varepsilon}, t \right).
\end{equation*}
To find boundary conditions for a solution of  differential equation \eqref{diff_w_0}  and to satisfy
the boundary conditions on $\Gamma^{\varepsilon}_{1}(t)$ and $\Gamma^{\varepsilon}_{3}(t),$  we should launch the boundary-layer asymptotics. To this end, in  $\delta$-neighborhoods of $\Gamma^{\varepsilon}_{1}(t)$ and $\Gamma^{\varepsilon}_{3}(t),$ we seek first terms in the form
\begin{equation}\label{prim+}
\varepsilon \, \Pi_1\left(\frac{y_1}{\varepsilon}, \frac{y_2}{\varepsilon}, t\right)
\quad \text{and} \quad \varepsilon \, \Pi^*_1\left(\frac{l - y_1}{\varepsilon},\frac{y_2}{\varepsilon},t\right),
\end{equation}
respectively.
Taking  into account Corollary \ref{c4.1} and substituting $\varepsilon  \Pi_1$  in \eqref{4.3},  we get the boundary value problem
\begin{equation}\label{prim+probl}
\left\{\begin{array}{rcll}
  \Delta_{\xi}\Pi_1(\xi,t) &=& 0, &     \xi = (\xi_1, \xi_2) \in (0,+\infty)\times (0, \mathcal{S}_0(t)),
  \\[3mm]
  \dfrac{\partial\Pi_1(\xi_1,0,t)}{\partial \xi_2}& =&\dfrac{\partial\Pi_1(\xi_1,\mathcal{S}_0(t),t)}{\partial \xi_2}\ = \ 0,      &  \xi_1\in(0,+\infty),
 \\[3mm]
\dfrac{\partial\Pi_1(0,\xi_2,t)}{\partial \xi_1} &=&   \Upsilon_1(\xi_2,t),& \xi_2 \in (0,\mathcal{S}_0(t)),
   \\[3mm]
  \Pi_1(\xi_1,\xi_2,t)& \to&   0, &   \xi_1 \to+\infty,\, \xi_2 \in \textcolor[rgb]{1,0,0}{[}0, \mathcal{S}_0(t)\textcolor[rgb]{1,0,0}{]},
 \end{array}
 \right.
\end{equation}
where  $\xi_1 = \dfrac{y_1}{\varepsilon},$ $\xi_2 = \dfrac{y_2}{\varepsilon},$ \
$
\Upsilon_1(\xi_2,t) = - \chi_{2}( \xi_2)\, \varphi_{1}(\xi_2,t) - \dfrac{\partial w_0}{\partial{y_1}} (0,t).
$

Then the method of separation of variables  allows us to find a solution of problem \eqref{prim+probl} in the form
\begin{equation}\label{view_solution}
\Pi_1(\xi,t)=\sum\limits_{m=0}^{+\infty} a_m(t)\exp\Big(-\tfrac{\pi m \xi_1}{\mathcal{S}_0(t)}\Big) \cos\left(\tfrac{\pi m}{\mathcal{S}_0(t)}\xi_2\right),
\end{equation}
where
\begin{gather*}
a_0(t)=\frac{1}{\mathcal{S}_0(t)}\int\limits_{0}^{\mathcal{S}_0(t)}\Upsilon_1(\xi_2,t) \, d\xi_2 = - \langle\langle \chi_{2}( \cdot)\, \varphi_{1}(\cdot,t)\rangle\rangle_{\mathcal{S}_0} - \frac{\partial w_0}{\partial{y_1}} (0,t)=0,
\\
a_m(t)=\frac{2}{\mathcal{S}_0(t)}\int\limits_{0}^{\mathcal{S}_0(t)}\Upsilon_1(\xi_2,t) \, \cos\left(\frac{\pi m}{\mathcal{S}_0(t)}\xi_2\right)d\xi_2, \quad m\in \mathbb{N}.
\end{gather*}
We remark that the fourth condition in (\ref{prim+probl}) leads to
 the equality for the coefficient $a_0$. In summary,
we end up with the  boundary condition
\begin{equation}\label{bv_left}
\frac{\partial w_0}{\partial{y_1}} (0,t) =
- \langle\langle \chi_{2}( \cdot)\, \varphi_{1}(\cdot,t)\rangle\rangle_{\mathcal{S}_0}.
\end{equation}
Repeating the same arguments leading to relations \eqref{prim+probl} and \eqref{view_solution}, we conclude that the unknown function~ $\Pi^*_1$  has to solve  the  problem
\begin{equation}\label{prim+probl+}
\left\{\begin{array}{rcll}
  \Delta_{\xi^{\star}}\Pi^{\star}_1(\xi^{\star},t) & = &0, &\xi^{\star} = (\xi^{\star}_1, \xi_2) \in(0,+\infty)\times (0, \mathcal{S}_l(t)),
  \\[3mm]
  \dfrac{\partial\Pi^{\star}_1(\xi^{\star}_1,0,t)}{\partial \xi_2} &=&\dfrac{\partial\Pi^{\star}_1(\xi^{\star}_1,\mathcal{S}_l(t),t)}{\partial \xi_2} \ = \  0,    &  \xi^{\star}_1\in(0,+\infty),
 \\[3mm]
\dfrac{\partial\Pi^{\star}_1(0,\xi_2,t)}{\partial \xi^{\star}_1} &=&   \Upsilon^{\star}_1(\xi_2,t), & \xi_2 \in (0,\mathcal{S}_l(t)),
   \\[3mm]
  \Pi^{\star}_1(\xi^{\star}_1,\xi_2,t) &\to &  0, &    \xi^{\star}_1 \to+\infty,\, \xi_2 \in \textcolor[rgb]{1,0,0}{[}0, \mathcal{S}_l(t)\textcolor[rgb]{1,0,0}{]},
\end{array}
 \right.
\end{equation}
where $\xi^\star_1 = \dfrac{l - y_1}{\varepsilon},$ $\xi_2 =\dfrac{y_2}{\varepsilon},$ \
$
\Upsilon^{\star}_1(\xi_2,t) = \chi_{2}( \xi_2)\, \varphi_{3}(\xi_2,t) - \dfrac{\partial w_0}{\partial{y_1}} (l,t).
$
Besides, the solution $\Pi^*_1$ is  given with formula \eqref{view_solution}, where we should change $\mathcal{S}_{0}(t)$ and $\Upsilon_1(\xi_2,t)$   by
 $\mathcal{S}_{l}(t)$ and $\Upsilon^{\star}_1(\xi_2,t),$  respectively. Finally,
 the boundary condition has the form
\begin{equation}\label{bv_right}
\frac{\partial w_0}{\partial{y_1}} (l,t) =
 \langle\langle \chi_{2}( \cdot)\, \varphi_{3}(\cdot,t)\rangle\rangle_{\mathcal{S}_l}.
\end{equation}

\begin{remark}\label{r4.1}
In virtue of \eqref{view_solution} and Corollary \ref{c4.1},     the  asymptotic relations hold
\begin{equation}\label{as_estimates}
\begin{array}{c}
  \Pi_1 =  \mathcal{O}(\exp(- \pi \xi_1)),\quad
	\frac{\partial \Pi_1}{\partial \xi_{1}} =  \mathcal{O}(\exp(- \pi \xi_1)),\quad \frac{\partial \Pi_1}{\partial \xi_{2}} =  \mathcal{O}(\exp(- \pi \xi_1)) \quad \mbox{as} \quad \xi_1\to+\infty,
  \\[2mm]
 \Pi_1^{\star}= \mathcal{ O}(\exp(- \pi \xi^\star_1))
\quad
	\frac{\partial \Pi_1^{\star}}{\partial \xi_{1}^{\star}} =  \mathcal{O}(\exp(- \pi \xi^\star_1)),\quad \frac{\partial \Pi_1^{\star}}{\partial \xi_{2}} =  \mathcal{O}(\exp(- \pi \xi^\star_1))
 \quad \mbox{as} \quad \xi^\star_1\to+\infty
\end{array}
\end{equation}
for all $t\in[0,T]$ and either $\xi_{2}\in[0,\S_{0}(t)]$ in the case of $\Pi_{1}$ or $\xi_{2}\in[0,\S_{l}(t)]$ for $\Pi_{1}^{\star}$.
 \end{remark}

Collecting relations \eqref{diff_w_0}, \eqref{bv_left} and \eqref{bv_right}, we deduce that, for each $t\in[0,T],$   unknown functions $\mathcal{S}$ and $w_{0}$ satisfy the problem
\begin{equation}\label{limit_probl}
\begin{cases}
\dfrac{\partial}{\partial y_1}\left( \mathcal{S}(y_1,t)  \dfrac{\partial w_0(y_1,t)}{\partial{y_1}}  \right) = \gamma \dfrac{\partial \mathcal{S}}{\partial t}(y_1,t) - \chi_{1}(y_{1})\, \varphi_{2}(y_{1},t), \quad y_1\in (0, l),
\\\\
\dfrac{\partial w_0}{\partial{y_1}} (0,t) =
- \langle\langle \chi_{2}( \cdot)\, \varphi_{1}(\cdot,t)\rangle\rangle_{\mathcal{S}_0},
\\\\
\dfrac{\partial w_0}{\partial{y_1}} (l,t) =
 \langle\langle \chi_{2}( \cdot)\, \varphi_{3}(\cdot,t)\rangle\rangle_{\mathcal{S}_l}.
\end{cases}
\end{equation}

In order to satisfy the solvability condition for problem \eqref{limit_probl},  we, first, suppose  that  $\mathcal{S}$ is a solution to the ordinary differential equation
\begin{equation*}
  \gamma \dfrac{\partial \mathcal{S}(y_1,t)}{\partial t} = \chi_{1}(y_{1})\, \varphi_{2}(y_{1},t) +  h_0(t), \quad t\in (0, T),
\end{equation*}
where an unknown  function $h_0$ will be defined below. After that,  problem \eqref{limit_probl} becomes as follows:
\begin{equation}\label{limit_probl+}
\begin{cases}
\dfrac{\partial}{\partial y_1}\left( \mathcal{S}(y_1,t)  \dfrac{\partial w_0(y_1,t)}{\partial{y_1}}  \right) = h_0(t), \quad y_1\in (0, l),
\\\\
\dfrac{\partial w_0}{\partial{y_1}} (0,t) =
- \langle\langle \chi_{2}( \cdot)\, \varphi_{1}(\cdot,t)\rangle\rangle_{\mathcal{S}_0},
\\\\
\dfrac{\partial w_0}{\partial{y_1}} (l,t) =
 \langle\langle \chi_{2}( \cdot)\, \varphi_{3}(\cdot,t)\rangle\rangle_{\mathcal{S}_l}.
\end{cases}
\end{equation}
Writing down the necessary and sufficient condition for  solvability of problem (\ref{limit_probl+}) and taking into account Corollary~\ref{c4.1}, we end up with
\begin{align*}
  h_0(t) &= \frac{1}{l} \left( \int_{0}^{\mathcal{S}_l(t)} \chi_{2}( \xi_2)\, \varphi_{3}(\xi_2,t) \, d\xi_2  +
  \int_{0}^{\mathcal{S}_0(t)} \chi_{2}( \xi_2)\, \varphi_{1}(\xi_2,t) \, d\xi_2\right)
  \\
&=  \frac{1}{l} \left( \int_{0}^{1} \chi_{2}( \xi_2)\, \varphi_{3}(\xi_2,t) \, d\xi_2  +
  \int_{0}^{1} \chi_{2}( \xi_2)\, \varphi_{1}(\xi_2,t) \, d\xi_2\right) , \quad t\in [0, T].
\end{align*}
Finally, taking into advantage of  condition \eqref{4.0},  we come to the  Cauchy problem
\begin{equation}\label{Cauchy}
\begin{cases}
\gamma \dfrac{\partial \mathcal{S}(y_1,t)}{\partial t}
 =  \chi_{1}(y_{1})\, \varphi_{2}(y_{1},t)  + h_0(t), \quad
  t \in (0, T),
\\[3mm]
\mathcal{S}(y_1,0)   =  1,
\end{cases}
\end{equation}
which  has a unique solution for every $y_1\in [0, l].$
\begin{remark}\label{r4.3}
If for all $t \in [0,T]$ and $y_1 \in [0, l]$ the inequality
$$
\chi_{1}(y_{1})\, \varphi_{2}(y_{1},t)  + \frac{1}{l} \left( \int_{0}^{1} \chi_{2}( \xi_2)\, \varphi_{3}(\xi_2,t) \, d\xi_2  +
  \int_{0}^{1} \chi_{2}( \xi_2)\, \varphi_{1}(\xi_2,t) \, d\xi_2\right) > 0
$$
holds, then $\frac{\partial \mathcal{S}(y_1,t)}{\partial t}>0,$   and, as a consequence, domain $\Omega_\varepsilon(t)$ increases in time for $\varepsilon$ small enough. Moreover, this condition ensures the fulfillment of  assumption \eqref{3.4}.
\end{remark}

Thus,  Neumann problem \eqref{limit_probl+} has a classical solution up to a function $\eta_0$  depending on $t\in[0,T].$
We can choose it in a such way to fulfill the integral condition in \eqref {4.3}. As a result, we get the solution
$$
\mathfrak{w}_0(y_1, t) = w_0(y_1, t)  -  \frac{1}{|\Gamma^\varepsilon(t)|} \int_{\Gamma^\varepsilon(t)} w_0 \, d\ell, \quad y_1\in [0, l], \ \ t \in [0,T],
$$
that satisfies the equality
\begin{equation}\label{t1}
\int_{\Gamma^{\varepsilon}(t)} \mathfrak{w}_0 \, d\ell = 0, \quad \forall \, t\in[0,T].
\end{equation}

%%%%%%%%%%%%%%%%%%%%%%%%%%%%%%%%%%%%%%%%%%%%%%%%%%%%%%%%%%%%%%%%%

%%%%%%%%%%%%%%%%%%%%%%%%%%%%%%%%%%%%%%%%%%%%%%%%%%%%%%%%%%%%%%%%%%%%%

\subsection{Justification}\label{s4.2}

First,  we determine the unique solution $\mathcal{S}$  to problem \eqref{Cauchy}, which is given by formula \eqref{S}. Here, we essential use the  assumptions \textbf{(h3)} and \textbf{(h5)} that provides the following smoothness of the function $\S$:
\begin{equation}\label{4.4}
\S\in\C([0,T],\C^{3}([0,l])),\quad\frac{\partial\S}{\partial t}\in \C([0,T],\C^{3}([0,l])).
\end{equation}
 In the next step, we obtain the unique smooth solution $\mathfrak{w}_0$ to problem
  \eqref{limit_probl}, which satisfies \eqref{t1}.
  After that, returning to problem \eqref{probl_3}, we conclude the uniqueness  and smoothness of its solution $u_2$ that satisfies  condition \eqref{cond_u_2}. Finally, there exist solutions to problems \eqref{prim+probl} and
\eqref{prim+probl+} with  asymptotics~\eqref{as_estimates}.

At this point, we start to estimate the difference  between the classical solution $p^{\varepsilon}$
and the approximation function
\begin{equation*}
  \mathcal{P}^\varepsilon(y, t) := \mathfrak{w}_0(y_1,t)  +  \varepsilon^{2} u_2 \left( y_1, \dfrac{y_2}{\varepsilon}, t \right), \quad y \in \Omega^\varepsilon(t),\quad t\in[0,T],
  \label{approx}
\end{equation*}
in the norm of the space $\C\big([0,T]; \, H^{1}(\Omega^{\varepsilon}(t))\big)$.

Substituting $\mathcal{P}^\varepsilon$ in the differential equation and the boundary conditions of problem~(\ref{4.3}) and taking into account  relations in problems   \eqref{limit_probl}, \eqref{probl_3},  Corollary~\ref{c4.1} and \eqref{cond_u_2}, we find that $  \mathcal{P}^\varepsilon$ solves the problem
\begin{equation*}
\begin{cases}
\Delta_{y}\mathcal{P}^\varepsilon  =  R^\varepsilon_1 \quad \text{in} \quad \Omega^\varepsilon(t),
\\\\
\dfrac{\partial \mathcal{P}^\varepsilon}{\partial y_{2}}=\varepsilon
\dfrac{\partial \mathcal{S}}{\partial y_{1}}  \dfrac{\partial \mathcal{P}^\varepsilon}{\partial
y_{1}}-\varepsilon \gamma \dfrac{\partial \mathcal{S}}{\partial t} + R^\varepsilon_2 \quad
\text{on}\ \Gamma^{\varepsilon}(t),
\\\\
\int_{\Gamma^{\varepsilon}(t)} \mathcal{P}^\varepsilon \, d\ell = 0,
\\\\
- \dfrac{\partial \mathcal{P}^\varepsilon}{\partial y_1}=
\langle\langle \chi_{2}( \cdot)\, \varphi_{1}(\cdot,t)\rangle\rangle_{\mathcal{S}_0}
\quad\text{on}\, \Gamma^{\varepsilon}_1(t),
\\\\
 \dfrac{\partial \mathcal{P}^\varepsilon}{\partial y_1}=
 \langle\langle \chi_{2}( \cdot)\, \varphi_{3}(\cdot,t)\rangle\rangle_{\mathcal{S}_l}
\quad\text{on}\, \Gamma^{\varepsilon}_3(t),
\\\\
\displaystyle{ - \frac{\partial \mathcal{P}^\varepsilon}{\partial y_2}= \varepsilon\chi_{1}( y_{1})\, \varphi_{2}(y_{1},t)}\quad\text{on}\,  \Gamma_2
\end{cases}
\end{equation*}
for any fixed $t\in [0,T].$ Here
$$
R^\varepsilon_1(y,t)= \varepsilon^2 \dfrac{\partial^2 u_2}{\partial y^2_{1}}\left( y_1, \frac{y_2}{\varepsilon}, t \right), \quad
R^\varepsilon_2(y,t)= \varepsilon^3 \, \dfrac{\partial \mathcal{S}}{\partial y_{1}} \,
\dfrac{\partial u_2}{\partial y_{1}}\left( y_1, \frac{y_2}{\varepsilon}, t \right).
$$
Due to condition \textbf{(h3)} and \textbf{(h4)} and \eqref{4.4} we have
\begin{equation}\label{est_1}
  \sup\limits_{t\in[0,T]} \, \sup\limits_{y\in\Omega^{\varepsilon}(t)}
      \left|R^{\varepsilon}_1(y,t)\right|
 \leq C  \varepsilon^{2}, \qquad
  \sup\limits_{t\in[0,T]} \, \sup\limits_{y\in\Gamma^{\varepsilon}(t)}
      \left|R^{\varepsilon}_2(y,t)\right|
 \leq C \, \varepsilon^{3}.
\end{equation}

\begin{remark}
In \eqref{est_1} and further,  all constants in  inequalities are independent
of the functions $\S,$ $\mathcal{P}^\varepsilon,$ $p^\varepsilon,$ the variables $y,$ $t$ and the
parameter $\varepsilon.$
 \end{remark}

As a consequence,  the difference  $W^\varepsilon = p^\varepsilon - \mathcal{P}^\varepsilon$ satisfies the relations
\begin{equation*}
\begin{cases}
-\Delta_{y}W^\varepsilon  =  R^\varepsilon_1 \quad \text{in} \ \Omega^\varepsilon(t),
\\\\
- \dfrac{\partial W^\varepsilon}{\partial {\bf n}_t} = \dfrac{R^\varepsilon_2}{|\nabla_y \mathcal{R}|}  \quad
\text{on}\ \Gamma^{\varepsilon}(t),
\\\\
\displaystyle{\int_{\Gamma^{\varepsilon}(t)} W_\varepsilon \, d\ell} = 0,
\\\\
- \dfrac{\partial W^\varepsilon}{\partial y_1}= \chi_{2}(y_{2}/\varepsilon,t) \varphi_{1}(y_{2}/\varepsilon,t)-
\langle\langle \chi_{2}( \cdot)\, \varphi_{1}(\cdot,t)\rangle\rangle_{\mathcal{S}_0}
\quad\text{on}\ \Gamma^{\varepsilon}_1(t),
\\\\
 \dfrac{\partial W^\varepsilon}{\partial y_1}= \chi_{2}(y_{2}/\varepsilon,t)\varphi_{3}(y_{2}/\varepsilon,t) -
 \langle\langle \chi_{2}( \cdot)\, \varphi_{3}(\cdot,t)\rangle\rangle_{\mathcal{S}_l}
\quad\text{on}\ \Gamma^{\varepsilon}_3(t),
\\\\
 - \dfrac{\partial W^\varepsilon}{\partial y_2}= 0 \quad\text{on}\  \Gamma_2.
\end{cases}
\end{equation*}
We multiply the differential equation in this relations by  $W^\varepsilon$ and, then,  integrate over $\Omega^{\varepsilon}(t)$ for each fixed $t\in[0,T]$. Standard calculations lead to  the equality
 \begin{multline}\label{int_nevyazka}
    \int_{\Omega^\varepsilon(t)} {|\nabla_{y} W^\varepsilon |}^2 \, dy
 =  \int_{\Omega^\varepsilon(t)} R^\varepsilon_1 \, W^\varepsilon \,dy
 -      \int_{\Gamma^\varepsilon(t)}     \frac{R^\varepsilon_2}{|\nabla_y \mathcal{R}|} \, W^\varepsilon \, d\ell
 \\
 + \int_{\Gamma^\varepsilon_1(t)} \Big( \Phi^\varepsilon -
\langle\langle \chi_{2}( \cdot)\, \varphi_{1}(\cdot,t)\rangle\rangle_{\mathcal{S}_0} \Big) \, W^\varepsilon \, dy_2
 + \int_{\Gamma^\varepsilon_3(t)} \Big(\Phi^\varepsilon -
\langle\langle \chi_{2}( \cdot)\, \varphi_{3}(\cdot,t)\rangle\rangle_{\mathcal{S}_l} \Big) \, W^\varepsilon \, dy_2.
\end{multline}
Then, we evaluate  each term in the right-hand side of \eqref{int_nevyazka}.

\noindent $\bullet$ As for  first two terms, appealing to inequalities \eqref{est_1} and  inequalities \eqref{Puankare} and \eqref{l_4}, we deduce
\begin{align}\label{est_2}
 \left|\int_{\Omega^\varepsilon(t)} R^\varepsilon_1 \, W^\varepsilon \,dy\right| &\le C \, \varepsilon^\frac{5}{2} \|W^\varepsilon\|_{L^2(\Omega^\varepsilon(t))} \le C \, \varepsilon^\frac{5}{2} \|\nabla_{y}W^\varepsilon\|_{L^2(\Omega^\varepsilon(t))},
 \\ \label{est_2+}
 \left|\int_{\Gamma^\varepsilon(t)}   \frac{R^\varepsilon_2}{|\nabla_y \mathcal{R}|} \, W^\varepsilon \, d\ell \right| &\le C
   \, \varepsilon^3 \|W^\varepsilon\|_{L^2(\Gamma^\varepsilon(t))}
	\leq C \, \varepsilon^\frac{5}{2} \|\nabla_{y}W^\varepsilon\|_{L^2(\Omega^\varepsilon(t))}.
\end{align}

\noindent $\bullet$
Coming to the last two terms in \eqref{int_nevyazka}, we introduce two smooth cut-off functions
\begin{equation*}%\label{cut-off-functions}
\chi_\delta(y_1)=
\left\{\begin{array}{ll}
1, & \text{if} \ \ y_1 \le \frac{\delta}{2},
\\[1mm]
0, & \text{if} \ \ y_1 \ge  \delta,
\end{array}\right.
\qquad
\chi^\star_\delta(y_1):= \chi_\delta(l - y_1),
\end{equation*}
where $\delta$ is defined in Corollary~\ref{c4.1}, and consider the functions
$$
  \varepsilon \, \chi_\delta(y_1)\,
   \Pi_1\left(\frac{y_1}{\varepsilon}, \frac{y_2}{\varepsilon}, t\right) \quad \text{and} \quad
 \varepsilon \, \chi^\star_\delta(y_1)\,  \Pi^\star_1\left(\frac{l - y_1}{\varepsilon},\frac{y_2}{\varepsilon},t\right).
$$
Taking into account relations in problem \eqref{prim+probl}, the direct calculations entail
\begin{equation*}
\begin{cases}
-\Delta_{y}\left( \varepsilon \chi_\delta  \Pi_1 \right) =  R^\varepsilon_3 \quad \text{in} \ \Omega^\varepsilon(t),
\\\\
- \dfrac{\partial }{\partial {\bf n}}\left( \varepsilon \chi_\delta  \Pi_1 \right) = 0 \quad
\text{on}\ \partial\Omega^\varepsilon(t) \setminus \Gamma^{\varepsilon}_1(t),
\\\\
- \dfrac{\partial}{\partial y_1}\left( \varepsilon \chi_\delta  \Pi_1 \right) = \Phi^\varepsilon -
\langle\langle \chi_{2}( \cdot)\, \varphi_{1}(\cdot,t)\rangle\rangle_{\mathcal{S}_0}
\quad\text{on}\ \Gamma^{\varepsilon}_1(t),
\end{cases}
\end{equation*}
whence
\begin{equation}\label{int_Pi}
 \int_{\Gamma^\varepsilon_1(t)} \Big(\Phi^\varepsilon -
\langle\langle \chi_{2}( \cdot)\, \varphi_{1}(\cdot,t)\rangle\rangle_{\mathcal{S}_0} \Big) \, W^\varepsilon \, dy_2 =-
\int_{\Omega^\varepsilon(t)} R^\varepsilon_3 \, W^\varepsilon \,dy +\varepsilon \int_{\Omega^\varepsilon(t)} \nabla_y\left(\chi_\delta  \Pi_1 \right) \cdot\nabla_y W^\varepsilon\, dy.
\end{equation}
Here
$$
R^\varepsilon_3(y,t) = - 2\frac{d\chi_\delta(y_1)}{dy_1}  \,
\frac{\partial\Pi_1(\xi,t)}{\partial \xi_1}\bigg|_{\xi=\frac{y}{\varepsilon}} -
\varepsilon \, \frac{d^2\chi_\delta(y_1)}{dy_1^2} \,
   \Pi_1(\xi,t)\big|_{\xi=\frac{y}{\varepsilon}}.
$$

Since the function $\Pi_1$  and its derivatives $\frac{\partial\Pi_{1}}{\partial\xi_{i}},$ $i=1,2,$ decrease exponentially  (see  Remark~\ref{r4.1}) and  the support of the derivatives of the cut-off function $\chi_\delta$ belongs to the segment $[\frac{\delta}{2}, \delta],$  we arrive at the inequality
\begin{equation}\label{est_3}
  \sup\limits_{t\in[0,T]} \, \sup\limits_{y\in\Omega^{\varepsilon}(t)}
      \left|R^{\varepsilon}_3(y,t)\right|
 \leq C \, \exp{ \left(-\frac{\pi \delta}{2\varepsilon}\right)}.
\end{equation}
With the help of \eqref{est_3} and the Poincar\'e inequality \eqref{Puankare} in Lemma \ref{l4.5} we derive
\begin{multline}\label{est_4}
\left|  \int_{\Gamma^\varepsilon_1(t)} \Big(\Phi^\varepsilon -
\langle\langle \chi_{2}( \cdot)\, \varphi_{1}(\cdot,t)\rangle\rangle_{\mathcal{S}_0} \Big) \, W^\varepsilon \, dy_2\right| \le C
\sqrt{\varepsilon}\, \exp{ \left(-\frac{\pi \delta}{2\varepsilon}\right)} \Big(\|W^\varepsilon\|_{L^2(\Omega^\varepsilon(t))} +
 \|\nabla W^\varepsilon\|_{L^2(\Omega^\varepsilon(t))}\Big)
\\
+ C \, \varepsilon \, \|\nabla W^\varepsilon\|_{L^2(\Omega^\varepsilon(t))}
\Big(\int_{0}^{+\infty}\int_{0}^{\mathcal{S}_0(t)} |\nabla_\xi \Pi_1(\xi,t)|^2\, d\xi_{2}d\xi_{1} \Big)^{\frac{1}{2}}
\le C \, \varepsilon \, \|\nabla W^\varepsilon\|_{L^2(\Omega^\varepsilon(t))}.
\end{multline}

Similarly arguments and  properties of the solution $\Pi^\star_1$ (see \eqref{prim+probl+}) yield
\begin{equation}\label{est_5}
\left|  \int_{\Gamma^\varepsilon_3(t)} \Big(\Phi^\varepsilon -
\langle\langle \chi_{2}( \cdot)\, \varphi_{3}(\cdot,t)\rangle\rangle_{\mathcal{S}_0} \Big) \, W^\varepsilon \, dy_2\right|
\le
C \, \varepsilon \, \|\nabla W^\varepsilon\|_{L^2(\Omega^\varepsilon(t))}.
\end{equation}

In conclusion, from \eqref{int_nevyazka} in virtue of  \eqref{est_2}, \eqref{est_2+}, \eqref{est_4} and \eqref{est_5}, it follows
the inequality
\begin{equation}\label{ineq_1}
\sup_{t\in[0,T]} \|\nabla W^\varepsilon\|_{L^2(\Omega^\varepsilon(t))} \le C_0\, \varepsilon,
\end{equation}
that together with Lemma \ref{l4.5} complete the proof of Theorem \ref{t4.1}.\qed

Using the Cauchy-Bunyakovsky-Schwarz inequality and \eqref{main_estimate},
we derive the statement.
\begin{corollary}\label{corol_7_4}  For the difference between the solution to problem \eqref{1.1}-\eqref{1.2} and the solution to the limit problem \eqref{limit_prob} the following estimate
$$
\left\| \, \langle\langle p^\varepsilon \rangle\rangle_{\varepsilon \mathcal{S}} - \mathfrak{w}_0 \, \right\|_{\C([0,T]; \, L^{2}(0, l))}\leq
C_0 \, \sqrt{\varepsilon}
$$
holds.
\end{corollary}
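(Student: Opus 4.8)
The plan is to reduce the claim to the main estimate \eqref{main_estimate} of Theorem~\ref{t4.1}, using the explicit structure of the approximation $\mathcal{P}^\varepsilon$ together with the thin‑domain scaling of the averaging operator $\langle\langle\cdot\rangle\rangle_{\varepsilon\mathcal{S}}$. First I would write
$$
\langle\langle p^\varepsilon\rangle\rangle_{\varepsilon\mathcal{S}} - \mathfrak{w}_0
= \langle\langle p^\varepsilon - \mathcal{P}^\varepsilon\rangle\rangle_{\varepsilon\mathcal{S}}
 + \big(\langle\langle \mathcal{P}^\varepsilon\rangle\rangle_{\varepsilon\mathcal{S}} - \mathfrak{w}_0\big)
$$
and treat the two summands separately. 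For the second one, the change of variables $y_2 = \varepsilon\xi_2$ in the definition \eqref{middle_value}, applied to $\mathcal{P}^\varepsilon(y,t)=\mathfrak{w}_0(y_1,t)+\varepsilon^2 u_2(y_1,y_2/\varepsilon,t)$, gives $\langle\langle \mathcal{P}^\varepsilon\rangle\rangle_{\varepsilon\mathcal{S}} = \mathfrak{w}_0(y_1,t)+\varepsilon^2\langle\langle u_2(y_1,\cdot,t)\rangle\rangle_{\mathcal{S}}$. Since $u_2$ is the ($\varepsilon$‑independent) smooth classical solution constructed in Section~\ref{s6} and $\mathcal{S}$ satisfies the two‑sided bound coming from \eqref{4.0}–\eqref{3.1}, the quantity $\langle\langle u_2\rangle\rangle_{\mathcal{S}}$ is bounded on $[0,l]\times[0,T]$; hence this summand is $\mathcal{O}(\varepsilon^2)$ in $\C([0,T];L^2(0,l))$.

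The substance is in the first summand. Putting $W^\varepsilon = p^\varepsilon-\mathcal{P}^\varepsilon$ and applying the Cauchy--Bunyakovsky--Schwarz inequality directly inside \eqref{middle_value},
$$
\big|\langle\langle W^\varepsilon\rangle\rangle_{\varepsilon\mathcal{S}}(y_1,t)\big|^2
\le \frac{1}{\varepsilon\,\mathcal{S}(y_1,t)}\int_0^{\varepsilon\mathcal{S}(y_1,t)}\big|W^\varepsilon(y_1,z,t)\big|^2\,dz
\le \frac{C}{\varepsilon}\int_0^{\varepsilon\mathcal{S}(y_1,t)}\big|W^\varepsilon(y_1,z,t)\big|^2\,dz .
$$
Integrating in $y_1$ over $(0,l)$, the right‑hand side becomes $\frac{C}{\varepsilon}\,\|W^\varepsilon(\cdot,t)\|^2_{L^2(\Omega^\varepsilon(t))}$, and \eqref{main_estimate} gives $\|W^\varepsilon(\cdot,t)\|_{L^2(\Omega^\varepsilon(t))}\le\|W^\varepsilon(\cdot,t)\|_{H^1(\Omega^\varepsilon(t))}\le C_0\varepsilon$ uniformly in $t\in[0,T]$. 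Therefore $\|\langle\langle W^\varepsilon\rangle\rangle_{\varepsilon\mathcal{S}}(\cdot,t)\|_{L^2(0,l)}\le C\sqrt{\varepsilon}$ for every $t$. Combining with the $\mathcal{O}(\varepsilon^2)$ estimate for the second summand and taking the supremum over $t\in[0,T]$ yields the asserted bound.

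I do not expect a genuine obstacle: the whole argument collapses to the observation that the volume factor of the thin domain produces the $1/\sqrt{\varepsilon}$ that downgrades the $\mathcal{O}(\varepsilon)$ bound of Theorem~\ref{t4.1} to the $\mathcal{O}(\sqrt{\varepsilon})$ bound here. The only routine checks are that $\langle\langle p^\varepsilon\rangle\rangle_{\varepsilon\mathcal{S}}$ is indeed an element of $\C([0,T];L^2(0,l))$ — which follows from the regularity $p^\varepsilon\in\C([0,T];\C^{2+\alpha}(\overline{\Omega^\varepsilon(t)}))$ of Theorem~\ref{t3.1} together with the continuity in $t$ of $\mathcal{S}$ — and the uniform two‑sided bounds on $\mathcal{S}$ used to replace $1/\mathcal{S}$ by a constant.
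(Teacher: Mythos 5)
Your proof is correct and follows exactly the route the paper sketches: decompose via the approximation $\mathcal{P}^\varepsilon$, observe that the $\varepsilon^2 u_2$ part contributes only $\mathcal{O}(\varepsilon^2)$, and then apply Cauchy--Bunyakovsky--Schwarz inside the transverse average so that the thin‑domain volume factor $1/\sqrt{\varepsilon}$ downgrades the $\mathcal{O}(\varepsilon)$ bound of \eqref{main_estimate} to $\mathcal{O}(\sqrt{\varepsilon})$. The paper states this in a single sentence, and your write‑up supplies precisely the details that sentence compresses.
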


%%%%%%%%%%%%%%%%%%%%%%%%%%%%%%%%%%%%%%%%%%%%%%%%%%%%%%%%%%%%%%%%%%%%%

%%%%%%%%%%%%%%%%%%%%%%%%%%%%%%%%%%%%%%%%%%%%%%%%%%%%%%%%%%%%%%%%%%%%%%
\section{Conclusions}
\label{sC}

\noindent
In this work, we discuss the one-phase contact Hele-Shaw problem \eqref{1.1}-\eqref{1.2} with ZST in the domain $\Omega^{\varepsilon}(t)$ that depends on a small parameter $\varepsilon.$  In particular, we analyze the classical local solvability of this problem for each fixed $\varepsilon$ and describe the asymptotic behavior of the solution $p^{\varepsilon}$ as $\varepsilon\to 0$.

As it follows from our consideration, the asymptotic analysis  turns out the effective tools to study of the Hele-Shaw problem in thin domains. Namely, it allows us to obtain not only the explicit representation of the free boundary $\Gamma^{\varepsilon}(t)$ but also to establish  preserving the geometry of the moving boundary in  $\delta$-neighborhoods of the corner points for $t\in[0,T].$  This property is not exactly the waiting time phenomena described in \cite{KLV}, since  the corner points on the free boundary shift  instantly  for $t>0$. However, in opposite to all the early obtained results concerning with the waiting time phenomena,  we can find the size of those $\delta$-neighborhoods that depends on the support of the function $\Phi^{\varepsilon}|_{y_{2}=0}$.

An important task of existing multiscale methods is their stability and accuracy.
The proof of the error estimate between the constructed approximation and the exact solution is a general principle that should be applied to the analysis of the effectiveness of the proposed multiscale method.
In our paper, we have constructed and justified the asymptotic approximation for the solution to problem \eqref{1.1}-\eqref{1.2} and proved the corresponding estimates. The results obtained in Theorem~\ref{t4.1} and  Corollary~\ref{corol_7_4}  argue that
the complex Hele-Shaw problem \eqref{1.1}-\eqref{1.2} can be replaced by the corresponding
Cauchy problem~\eqref{Cauchy} and one-dimensional  limit problem \eqref{limit_prob} with sufficient accuracy measured by the parameter $\varepsilon$ characterizing the thickness of the domain $\Omega^{\varepsilon}(t)$ and the amplitude of the free boundary.

Our ideas can be exported to cover the analysis of problems like \eqref{1.1}-\eqref{1.2} in more general cases. First, our consideration can be extended to the Hele-Shaw problem with nonzero surface tension (NZST), and to the Stefan problem in the case of  both ZST and  NZST.  The proposed approach can be adapted and generalized in order to consider problem like \eqref{1.1}-\eqref{1.2} in three-dimensional case, i.e. $Q\in\R^{3},$ $Q=(0,l_{1})\times(0,l_{2})\times(0,2\varepsilon)$.
Also, it will be very interesting to study Hele-Shaw and Stefan problems in thin domains when a free boundary has
a highly small amplitude, for instance, $\rho = \mathcal{O}(\varepsilon^\alpha)$ as $\varepsilon \to 0$ and $\alpha >1.$
Perhaps all of  this will be the subject of future research.

%%%%%%%%%%%%%%%%%%%%%%%%%%%%%%%%%%%%%%%%%%%%%%%%%%%%%%%%%%%%%%%%%%%%%

%%%%%%%%%%%%%%%%%%%%%%%%%%%%%%%%%%%%%%%%%%%%%%%%%%%%%%%%%%%%%%%%%%%%%%
\section*{Appendix}
\label{sA}

\theoremstyle{definition}
\newtheorem{reAPP}{Remark}[section]
\newtheorem{leAPP}{Lemma}[section]
\renewcommand{\thereAPP}{A.\arabic{reAPP}}
\renewcommand{\theleAPP}{A.\arabic{leAPP}}
\setcounter{equation}{0}
\setcounter{subsection}{0}
\renewcommand{\theequation}{A.\arabic{equation}}
\renewcommand{\thesubsection}{A.\arabic{subsection}}

\subsection{Statement of Lemma \ref{l2.1}}\label{s7.1}

 Denoting the inverse Laplace transformation
with respect to time $t$ by $\mathcal{L}^{-1}_{t}$,
 we recall some properties of the function $K=K(x,t):\R\times[0,T]\to\R:$
\begin{equation}\label{2.3}
K(x,t)=\mathcal{L}_{t}^{-1}\Big(\int_{-\infty}^{+\infty}\frac{e^{i\lambda x}}{p+\mathfrak{C}_{0}|\lambda|}d\lambda\Big)
\end{equation}
with the positive number $\mathfrak{C}_{0}$ and $Re\, p>0$, which are
obtained in Lemma 3.1 \cite{BV1} (where $\mathfrak{C}_{0}=A_{1}$ and
$A_{2}=0$).
\begin{leAPP}\label{l2.1}
Let $\alpha\in(0,1),$ $T>0$ be arbitrary fixed and let $k$ be nonnegative integer. Then for each  $t\in[0,T]$ and $x,x_{1}, x_{2}\in \R$, the following estimates hold:
\begin{itemize}
\item[(i)] $$K(x,t)=\frac{2\mathfrak{C}_{0}t}{(\mathfrak{C}_{0}t)^{2}+x^{2}};$$
\item[(ii)]
$$\int_{0}^{t}d\tau\int_{-\infty}^{+\infty}\frac{\partial^{k}K}{\partial y^{k}}(y,\tau)dy=
\begin{cases}
2\pi t\quad\text{if}\quad k=0,\\
0 \quad\text{if}\quad k>0;
\end{cases}$$
\item[(iii)]
\begin{align*}
\int_{0}^{t}d\tau\int_{0}^{+\infty}|y|^{\alpha}\Big|\frac{\partial K}{\partial y}(y,\tau)\Big|dy&\leq Ct^{\alpha},\\
\int_{0}^{t}d\tau\int_{|y|\leq 2|x_{1}-x_{2}|}|y|^{\alpha}\Big|\frac{\partial K}{\partial y}(y,\tau)\Big|dy&\leq C|x_{1}-x_{2}|^{\alpha},\\
\int_{0}^{t}d\tau\int_{|y|\geq 2|x_{1}-x_{2}|}|y|^{\alpha}\Big|\frac{\partial^{2} K}{\partial y^{2}}(y,\tau)\Big|dy&\leq C|x_{1}-x_{2}|^{\alpha-1}.
\end{align*}
\end{itemize}
\end{leAPP}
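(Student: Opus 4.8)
The plan is to reduce everything to the closed-form expression in (i), which is a Poisson-type kernel, and then to read off (ii) and (iii) from it by elementary calculus and a scaling argument. \textbf{Step 1 (the explicit formula).} Since for every $a\ge 0$ and $\mathrm{Re}\,p>0$ one has $\mathcal{L}^{-1}_{t}\bigl((p+a)^{-1}\bigr)=e^{-at}$, I would first insert a convergence factor $e^{-\kappa|\lambda|}$ and let $\kappa\to 0^{+}$ in order to interchange the inverse transform in \eqref{2.3} with the $\lambda$-integral, obtaining
\[
K(x,t)=\int_{-\infty}^{+\infty}e^{i\lambda x}\,e^{-\mathfrak{C}_{0}|\lambda|t}\,d\lambda
=\frac{1}{\mathfrak{C}_{0}t-ix}+\frac{1}{\mathfrak{C}_{0}t+ix}
=\frac{2\mathfrak{C}_{0}t}{(\mathfrak{C}_{0}t)^{2}+x^{2}},
\]
which is exactly (i); equivalently, one may simply verify a posteriori that the Laplace transform in $t$ of $\tfrac{2\mathfrak{C}_{0}t}{(\mathfrak{C}_{0}t)^{2}+x^{2}}$ equals $\int_{\R}e^{i\lambda x}(p+\mathfrak{C}_{0}|\lambda|)^{-1}\,d\lambda$. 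This is the half-plane Poisson kernel with ``height'' $\mathfrak{C}_{0}t$.

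\textbf{Step 2 (identity (ii)).} From the closed form, $\int_{\R}K(y,\tau)\,dy=\bigl[2\arctan(y/\mathfrak{C}_{0}\tau)\bigr]_{-\infty}^{+\infty}=2\pi$ for every $\tau>0$, and integrating this over $\tau\in(0,t)$ gives $2\pi t$. For $k\ge 1$ one has $\int_{\R}\partial_{y}^{k}K(y,\tau)\,dy=\bigl[\partial_{y}^{k-1}K(y,\tau)\bigr]_{y=-\infty}^{y=+\infty}=0$, because $K(\cdot,\tau)$ is a rational function all of whose $y$-derivatives decay at infinity; a further integration in $\tau$ yields $0$.

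\textbf{Step 3 (estimates (iii)).} Differentiating (i),
\[
\partial_{y}K(y,\tau)=\frac{-4\mathfrak{C}_{0}\tau\,y}{\bigl((\mathfrak{C}_{0}\tau)^{2}+y^{2}\bigr)^{2}},
\qquad
\partial_{y}^{2}K(y,\tau)=\frac{-4\mathfrak{C}_{0}\tau\bigl((\mathfrak{C}_{0}\tau)^{2}-3y^{2}\bigr)}{\bigl((\mathfrak{C}_{0}\tau)^{2}+y^{2}\bigr)^{3}},
\]
so in particular $|\partial_{y}^{2}K|\le 12\mathfrak{C}_{0}\tau\bigl((\mathfrak{C}_{0}\tau)^{2}+y^{2}\bigr)^{-2}$. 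The homogeneity substitution $y=\mathfrak{C}_{0}\tau\,u$ gives $\int_{0}^{+\infty}|y|^{\alpha}|\partial_{y}K(y,\tau)|\,dy=C_{\alpha}(\mathfrak{C}_{0}\tau)^{\alpha-1}$ with $C_{\alpha}=4\int_{0}^{+\infty}u^{1+\alpha}(1+u^{2})^{-2}\,du<\infty$ for $\alpha\in(0,1)$, and integrating $\tau^{\alpha-1}$ over $(0,t)$ produces the first bound $\le Ct^{\alpha}$. For the other two I would write $d=|x_{1}-x_{2}|$ and split the $\tau$-integral at $\tau\sim d/\mathfrak{C}_{0}$. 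Second bound: for $\mathfrak{C}_{0}\tau\le d$ use $\int_{\R}|y|^{\alpha}|\partial_{y}K|\,dy\le C\tau^{\alpha-1}$ and integrate to get $Cd^{\alpha}$; for $\mathfrak{C}_{0}\tau\ge d$ bound $(\mathfrak{C}_{0}\tau)^{2}+y^{2}\ge(\mathfrak{C}_{0}\tau)^{2}$ on $|y|\le 2d$ to get $\int_{|y|\le 2d}|y|^{\alpha}|\partial_{y}K|\,dy\le Cd^{2+\alpha}\tau^{-3}$, whose $\tau$-integral over $(d/\mathfrak{C}_{0},\infty)$ is again $Cd^{\alpha}$. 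Third bound: for $\mathfrak{C}_{0}\tau\le 2d$ bound $(\mathfrak{C}_{0}\tau)^{2}+y^{2}\ge y^{2}$ on $|y|\ge 2d$ to get $\int_{|y|\ge 2d}|y|^{\alpha}|\partial_{y}^{2}K|\,dy\le C\tau d^{\alpha-3}$, whose $\tau$-integral gives $Cd^{\alpha-1}$; for $\mathfrak{C}_{0}\tau\ge 2d$ the substitution $y=\mathfrak{C}_{0}\tau\,u$ gives $\int_{|y|\ge 2d}|y|^{\alpha}|\partial_{y}^{2}K|\,dy\le C(\mathfrak{C}_{0}\tau)^{\alpha-2}$, whose integral over $(2d/\mathfrak{C}_{0},\infty)$ is $Cd^{\alpha-1}$ (here $\alpha-2<-1$). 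All one-dimensional integrals that appear converge precisely because $\alpha\in(0,1)$.

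\textbf{Expected difficulty.} There is no deep step here: (i) carries the only real idea, and once it is available (ii)--(iii) are calculus. The points that need attention are the legitimacy of exchanging $\mathcal{L}^{-1}_{t}$ with the formally divergent $\lambda$-integral in Step~1 (handled by the regularization, or simply bypassed via the a posteriori Laplace-transform check), and, in Step~3, choosing the splitting scale $\tau\sim|x_{1}-x_{2}|/\mathfrak{C}_{0}$ so that the scaling-dominated small-$\tau$ part and the decay-dominated large-$\tau$ part both land exactly on $|x_{1}-x_{2}|^{\alpha}$, respectively $|x_{1}-x_{2}|^{\alpha-1}$.
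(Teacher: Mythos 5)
Your proof is correct. In fact the paper itself does not prove this lemma at all---the statement is ``recalled'' from Lemma~3.1 of \cite{BV1} (the Bazaliy--Vasylyeva two-phase Hele-Shaw paper), so you are supplying the details that the authors explicitly chose to outsource. Your route (compute the kernel in closed form, recognize it as $2\pi$ times the upper-half-plane Poisson kernel at height $\mathfrak{C}_0 t$, then deduce (ii)--(iii) by the scaling $y=\mathfrak{C}_0\tau\,u$ and the split of the $\tau$-integral at scale $|x_1-x_2|/\mathfrak{C}_0$) is exactly what the formulas demand, and all the asserted estimates check out: the one-dimensional integrals converge for $\alpha\in(0,1)$ in the first inequality, for $2+\alpha>-1$ together with $\tau^{-3}$ integrability at infinity in the second, and for $\alpha-2<-1$ together with $\tau$ integrability at zero in the third.

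Two small remarks on rigor rather than substance. First, in Step 1 the formal $\lambda$-integral $\int_{\R}\frac{e^{i\lambda x}}{p+\mathfrak{C}_0|\lambda|}\,d\lambda$ is only conditionally convergent for $\mathrm{Re}\,p>0$; your regularization $e^{-\kappa|\lambda|}$, followed by $\kappa\to0^+$ after identifying the limit as the continuous function $\frac{2\mathfrak{C}_0 t}{(\mathfrak{C}_0 t)^2+x^2}$, is the clean way to justify the interchange of $\mathcal{L}^{-1}_t$ and the $\lambda$-integration, and should probably be spelled out a bit more (one needs dominated convergence both in $\lambda$ and along the Bromwich contour). Second, in Step 2 you tacitly use that $\partial_y^{k-1}K(\cdot,\tau)$ is a rational function with denominator degree strictly exceeding numerator degree, hence vanishes as $y\to\pm\infty$; it costs nothing to say this explicitly since it is exactly what makes the boundary terms disappear for $k\ge1$. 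Neither point affects the correctness of the argument.
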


%%%%%%%%%%%%%%%%%%%%%%%%%%%%%%%%%%%%%%%%%%%%%%%%%%%%%%%%%%%%%%%%%%%%%

%%%%%%%%%%%%%%%%%%%%%%%%%%%%%%%%%%%%%%%%%%%%%%%%%%%%%%%%%%%%%%%%%%%%%%

\end{document}